\newcolumntype{P}[1]{>{\centering\arraybackslash}p{#1}}
\newcommand{\ifcomment}{\iffalse}
\newcommand{\bs}[1]{\boldsymbol{#1}}
\newcommand{\avg  }[1]{\langle #1 \rangle}
\newcommand{\ti}[1]{\tilde{#1}}
\newcommand{\G}{\Gamma}
\newcommand{\Om}{\Omega}
\newcommand{\cT}{{\mathcal T}}
\newcommand{\tO}{\tilde{\Omega}_h}
\newcommand{\tG}{\tilde{\Gamma}_h}
\newcommand{\tGD}{\tilde{\Gamma}_{D,h}}
\newcommand{\tGN}{\tilde{\Gamma}_{N,h}}
\newcommand{\tx}{{\tilde{\bs{x}}}}
\newcommand{\tT}{{\tilde{\cT}_h}}
\newcommand{\tS}{S_{h}}
\newtheorem{thm}{Theorem}
\newtheorem{prop}{Proposition}
\newtheorem{lemma}{Lemma}
\newdefinition{rem}{Remark}
\newtheorem{assumption}{Assumption}
\newcolumntype{M}[1]{>{\centering\arraybackslash}m{#1}}
\newcommand{\Figures}{./Figures}
\begin{document}
	
	
	\begin{frontmatter}

\title{The Second-Generation Shifted Boundary Method and Its Numerical Analysis}

\author[duke]{Nabil M. Atallah}
\ead{nabil.atallah@duke.edu}
\author[polito]{Claudio Canuto}
\ead{claudio.canuto@polito.it}
\author[duke]{Guglielmo Scovazzi\corref{ca}}
\ead{guglielmo.scovazzi@duke.edu}
\address[duke]{Department of Civil and Environmental Engineering, Duke University, Durham, North Carolina 27708, USA}
\address[polito]{Dipartimento di Scienze Matematiche, Politecnico di Torino, Corso Duca degli Abruzzi, 24
	10129 Torino, Italy}
\cortext[ca]{Corresponding author: Guglielmo Scovazzi}

\begin{abstract} 
Recently, the Shifted Boundary Method (SBM) was proposed within the class of unfitted (or immersed, or embedded) finite element methods.
By reformulating the original boundary value problem over a surrogate (approximate) computational domain, the SBM avoids integration over cut cells and the associated problematic issues regarding numerical stability and matrix conditioning. Accuracy is maintained by modifying the original boundary conditions using Taylor expansions. Hence the name of the method, that {\it shifts} the location and values of the boundary conditions.
In this article, we present enhanced variational SBM formulations for the Poisson and Stokes problems with improved flexibility and robustness. 
These simplified variational forms allow to relax some of the assumptions required by the mathematical proofs of stability and convergence of earlier implementations. 
First, we show that these new SBM implementations can be proved asymptotically stable and convergent even without the rather restrictive assumption that the inner product between the normals to the true and surrogate boundaries is positive.
Second, we show that it is not necessary to introduce a stabilization term involving the tangential derivatives of the solution at Dirichlet boundaries, therefore avoiding the calibration of an additional stabilization parameter.
Finally, we prove enhanced $L^{2}$-estimates without the cumbersome assumption - of earlier proofs - that the surrogate domain is convex. Instead we rely on a conventional assumption that the boundary of the true domain is smooth, which can also be replaced by requiring convexity of the true domain.
The aforementioned improvements open the way to a more general and efficient implementation of the Shifted Boundary Method, particularly in complex three-dimensional geometries.
We complement these theoretical developments with numerical experiments in two  and three dimensions.
\end{abstract}

\begin{keyword}
Shifted boundary method; immersed boundary method; small cut-cell problem; approximate domain boundaries; weak boundary conditions; unfitted finite element methods.
\end{keyword}
\end{frontmatter}

\section{Introduction \label{sec:intro} }
In this article, we provide improved proofs for well-posedness, numerical stability and convergence of the {\it shifted boundary method} (SBM) for the Poisson and Stokes problems under a simplified set of assumptions which makes the SBM more generally applicable in the simulation of practical engineering problems of very complex geometry. 

We briefly recall the scope and motivation for the SBM, which falls in the broader category of unfitted (or embedded) finite element methods~\cite{boffi2003finite,burman2010ghost,hansbo2002unfitted,hollig2003finite,hollig2001weighted,ruberg2012subdivision,ruberg2014fixed,schott2015face,burman2018cut,burman2019dirichlet,burman2017cut,burman2010fictitious,burman2012fictitious,burman2014unfitted,burman2018shape,massing2015nitsche,burman2015cutfem,kamensky2017immersogeometric,xu2016tetrahedral,lozinski2016new}. 
Many of these methods require the geometric construction of the partial elements cut by the embedded boundary, which can be both algorithmically complicated and computationally intensive, due to data structures that are considerably more complex with respect to corresponding fitted finite element methods. 
Furthermore, integrating the variational forms on the characteristically irregular cut cells may also be difficult and advanced quadrature formulas might need to be employed~\cite{parvizian2007finite,duster2008finite}. Accordingly, it is typical for unfitted methods that a non-negligible portion of the overall wall-clock time for a simulation is spent handling the embedded boundary, when complex geometries are considered.

The SBM approach is instead aimed at avoiding integration over cut cells and all the problematic issues just mentioned, and belongs to the more specific class of approximate domain methods~\cite{bramble1972projection,bramble1996finite,bramble1994robust,cockburn2012solving,cockburn2014priori,cockburn2010boundary,bertoluzza2005fat,bertoluzza2011analysis,glowinski1994fictitious} for some examples). 
The SBM is built for minimal computational complexity, in that the location where boundary conditions are applied is {\it shifted} from the true to an approximate (surrogate) boundary, and, at the same time, modified ({\it shifted}) boundary conditions are applied in order to avoid a reduction in the convergence rates of the overall formulation. In fact, if the boundary conditions associated to the true domain are not appropriately modified on the surrogate domain, only first-order convergence is to be expected. The shifted boundary conditions are appropriately modified by means of Taylor expansions and are applied weakly, using a Nitsche strategy. This process yields a method which is simple, robust, accurate and efficient.

The shifted boundary method was proposed in~\cite{main2018shifted0} for the Poisson and Stokes flow problems and generalized in~\cite{main2018shifted} to the advection-diffusion and Navier-Stokes equations, and in~\cite{song2018shifted} to hyperbolic conservation laws. In~\cite{main2018shifted0} and~\cite{main2018shifted}, an analysis of the stability and accuracy of the SBM for the Poisson and advection-diffusion operators was also included, respectively. More recently, the authors of~\cite{atallah2020analysis} analyzed the stability and accuracy of the SBM for the Stokes flow equations, in an endeavor to complete the numerical analysis of the method for the fundamental differential operators that combine in the Navier-Stokes equations and many other linear and nonlinear partial differential equations of importance in engineering and physical sciences. 

In the present work, we propose {\it second generation} shifted boundary formulations for the Poisson and Stokes flow problems, and we include their mathematical analysis of stability and accuracy. 
These enhanced formulations are obtained by 1) discarding the assumption that the inner product between the normals to the true and surrogate boundaries must be positive~\cite{main2018shifted0} and 2) removing a boundary stabilization term constructed with tangential derivatives, which was initially considered necessary for the numerical stability of the method~\cite{main2018shifted0}. 
Particularly, the assumption that the inner product between the normals to the true and surrogate boundaries must be positive is typically not verified in three dimensional computations involving complex geometries, and the ability to avoid such restriction is paramount for the application of the SBM framework to general engineering problems. 
Furthermore, the new proofs of stability and accuracy proposed in this work provide a clear explanation of why the SBM is very effective in the robust treatment of complex geometry problems.

In addition, we provide enhanced proofs for the convergence rates in the $L^{2}$-norm, using the conventional requirement that the boundary of the true domain is smooth, as opposed to the restrictive assumption that the surrogate domain is convex~\cite{main2018shifted0,main2018shifted,atallah2020analysis}. We also note that the assumption of smoothness of the true boundary can be replaced by an assumption of convexity of the true domain.

Finally, in the case of the specific stabilized variational formulation utilized to treat the Stokes operator, we also discard one of the stabilization terms associated with the incompressibility condition. This modification is however less relevant for those practitioners who are interested in pairing the SBM with LBB-stable finite elements.

This article is organized as follows: Section~\ref{sec:sbm_intro} introduces the general SBM notation; the analysis of the SBM variational formulation of the Poisson problem is discussed in Section~\ref{sec:sbm_poisson}; the analysis of the SBM variational formulation of the Stokes problem is presented in Section~\ref{sec:sbm_stokes}; extensive numerical tests are presented in Section~\ref{sec:2DNumerical_Results} and Section~\ref{sec:3DNumerical_Results}; and finally, conclusions are summarized in Section~\ref{sec:summary}.

\section{The shifted boundary method} \label{sec:sbm_intro}
\noindent {\sl Notation.} Throughout the paper, we will denote by $L^{2}(\Om)$ the space of square integrable functions on $\Om$ and by $L^{2}_{0}(\Om)$ the space of square integrable functions with zero mean on $\Om$ (i.e., $q \in L^{2}_{0}(\Om)$ implies $\int_{\Om} q = 0$). We will use the Sobolev spaces $H^m(\Om)=W^{m,2}(\Om)$ of index of regularity $m \geq 0$ and index of summability 2, equipped with the (scaled) norm
\begin{equation}
\|v \|_{H^{m}(\Om)} 
= \left( \| \, v \, \|^2_{L^2(\Om)} + \sum_{k = 1}^{m} \| \, l(\Om)^k  \bs{D}^k v \, \|^2_{L^2(\Om)} \right)^{1/2} \; ,
\end{equation}
where $\bs{D}^{k}$ is the $k$th-order spatial derivative operator and $l(A)=\mathrm{meas}_{n_d}(A)^{1/n_d}$ is a characteristic length of the domain $A$ ($n_d=2,3$ indicates the number of spatial dimensions). Note that $H^0(\Om)=L^{2}(\Om)$.  As usual, we use a simplified notation for norms and semi-norms, i.e., we set $\| \, v  \, \|_{m,\Om}=\|\, v \, \|_{H^m(\Om)}$ and $| \, v \, |_{k,\Om}= 
\| \, \bs{D}^k v \,\|_{0,\Om}= \| \, \bs{D}^k v \, \|_{L^2(\Om)}$.
\subsection{The true domain, the surrogate domain and maps}
\label{sec:sbmDef}
Let $\Om$ be a connected open set in $\mathbb{R}^{n_d}$ with Lipschitz boundary. We consider a closed domain ${\cal D}$ such that $\text{clos}(\Om) \subseteq {\cal D}$ and we introduce a family $\cT_h$ of admissible and shape-regular triangulations of ${\cal D}$. Then, we restrict each triangulation by selecting those elements that are contained in $\text{clos}(\Om)$, i.e., we form
$$
\ti{\cT}_h := \{ T \in \cT_h : T \subset \text{clos}(\Om) \}\,.
$$ 
This identifies the {\sl surrogate domain}
$$
\tO := \text{int} \left(\bigcup_{T \in \ti{\cT}_h}  T \right) \subseteq \Om \,,
$$
with {\sl surrogate boundary} $\tG:=\partial \tO$ and outward-oriented unit normal vector $\ti{\bs{n}}$ to $\tG$. Obviously, $\ti{\cT}_h$ is an admissible and shape-regular triangulation of $\tO$ (see Figure~\ref{fig:SBM}).
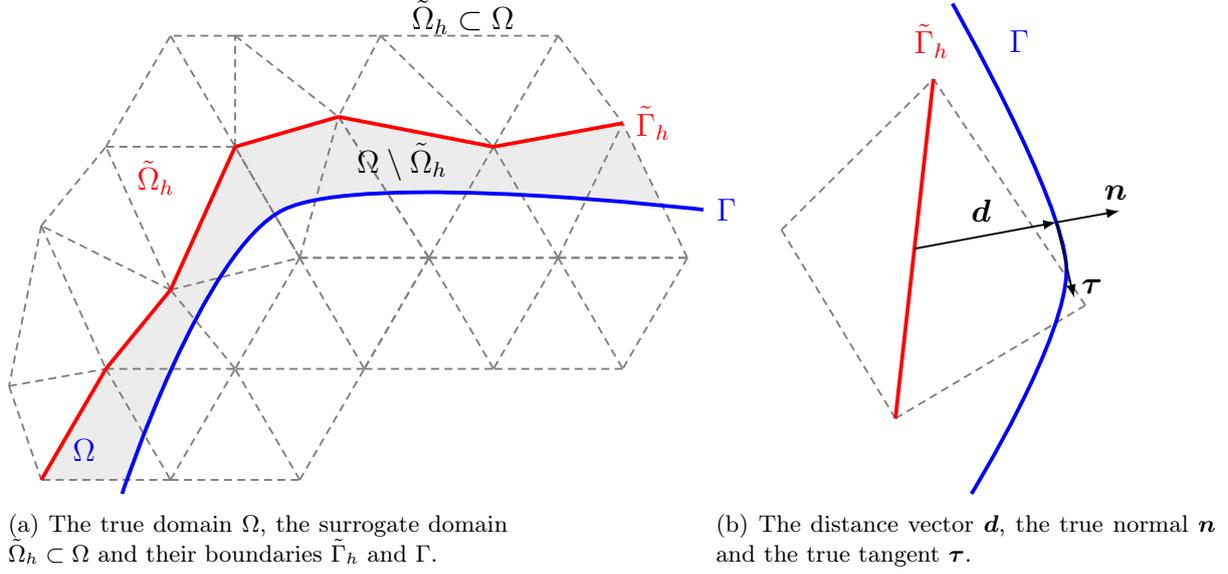
\begin{figure}
	\centering
	\begin{subfigure}[b]{.4\textwidth}\centering
		\begin{tikzpicture}[scale=0.85]
		\draw [black, draw=none,name path=surr] plot coordinates { (-2,-3.4641) (-1,-1.73205) (0,-0.5) (1,1.73205) (2.6,2.2) (5,1.73205) (7,2.1) (7.62,0.8) };
		\draw [blue, name path=true] plot[smooth] coordinates {(-0.7,-3.4641) (1.75,0.75) (8.25,0.75)};
		\tikzfillbetween[of=true and surr,split]{gray!15!};
		\draw[line width = 0.25mm,densely dashed,gray] (-1,1.73205) -- (0,3.4641);
		\draw[line width = 0.25mm,densely dashed,gray] (0,3.4641) -- (2,0);
		\draw[line width = 0.25mm,densely dashed,gray] (1,1.73205) -- (1,3.4641);
		\draw[line width = 0.25mm,densely dashed,gray] (1,3.4641) -- (0,3.4641);
		\draw[line width = 0.25mm,densely dashed,gray] (1,3.4641) -- (2.6,2.2);
		\draw[line width = 0.25mm,densely dashed,gray] (1,3.4641) -- (3.25,3.4641);
		\draw[line width = 0.25mm,densely dashed,gray] (3.25,3.4641) -- (2.6,2.2);
		\draw[line width = 0.25mm,densely dashed,gray] (3.25,3.4641) -- (5,1.73205);
		\draw[line width = 0.25mm,densely dashed,gray] (3.25,3.4641) -- (6,3.4641);
		\draw[line width = 0.25mm,densely dashed,gray] (6,3.4641) -- (5,1.73205);
		\draw[line width = 0.25mm,densely dashed,gray] (6,3.4641) -- (7,2.1);
		\draw[line width = 0.25mm,densely dashed,gray] (0,-0.5) -- (-2,0.5);
		\draw[line width = 0.25mm,densely dashed,gray] (-2,0.5) -- (-1,1.73205);
		\draw[line width = 0.25mm,densely dashed,gray] (-2,0.5) -- (-1,-1.73205);
		\draw[line width = 0.25mm,densely dashed,gray] (-2,0.5) -- (-2.5,-2);
		\draw[line width = 0.25mm,densely dashed,gray] (-2.5,-2) -- (-1,-1.73205);
		\draw[line width = 0.25mm,densely dashed,gray] (-2.5,-2) -- (-2,-3.4641);
		\draw[line width = 0.25mm,densely dashed,gray] (0,-0.5) -- (-1,1.73205);
		\draw[line width = 0.25mm,densely dashed,gray] (-1,1.73205) -- (1,1.73205);
		\draw[line width = 0.25mm,densely dashed,gray] (0,-0.5) -- (2,0);
		\draw[line width = 0.25mm,densely dashed,gray] (2,0) -- (1,1.73205);
		\draw[line width = 0.25mm,densely dashed,gray] (1,1.73205) -- (0,-0.5);
		\draw[line width = 0.25mm,densely dashed,gray] (2,0) -- (2.6,2.2);
		\draw[line width = 0.25mm,densely dashed,gray] (2.6,2.2) -- (1,1.73205);
		\draw[line width = 0.25mm,densely dashed,gray] (2,0) -- (4,0);
		\draw[line width = 0.25mm,densely dashed,gray] (4,0) -- (2.6,2.2);
		\draw[line width = 0.25mm,densely dashed,gray] (4,0) -- (2.6,2.2);
		\draw[line width = 0.25mm,densely dashed,gray] (2.6,2.2) -- (5,1.73205);
		\draw[line width = 0.25mm,densely dashed,gray] (5,1.73205) -- (4,0);
		\draw[line width = 0.25mm,densely dashed,gray] (4,0) -- (6,0);
		\draw[line width = 0.25mm,densely dashed,gray] (6,0) -- (5,1.73205);
		\draw[line width = 0.25mm,densely dashed,gray] (6,0) -- (7,2.1);
		\draw[line width = 0.25mm,densely dashed,gray] (7,2.1) -- (5,1.73205);
		\draw[line width = 0.25mm,densely dashed,gray] (6,0) -- (8,0);
		\draw[line width = 0.25mm,densely dashed,gray] (8,0) -- (7,2.1);
		\draw[line width = 0.25mm,densely dashed,gray] (0,-0.5) -- (-1,-1.73205);
		\draw[line width = 0.25mm,densely dashed,gray] (-1,-1.73205) -- (1,-1.73205);
		\draw[line width = 0.25mm,densely dashed,gray] (2,0) -- (1,-1.73205);
		\draw[line width = 0.25mm,densely dashed,gray] (1,-1.73205) -- (0,-0.5);
		\draw[line width = 0.25mm,densely dashed,gray] (2,0) -- (3,-1.73205);
		\draw[line width = 0.25mm,densely dashed,gray] (3,-1.73205) -- (1,-1.73205);
		\draw[line width = 0.25mm,densely dashed,gray] (4,0) -- (3,-1.73205);
		\draw[line width = 0.25mm,densely dashed,gray] (2,0) -- (4,0);
		\draw[line width = 0.25mm,densely dashed,gray] (4,0) -- (3,-1.73205);
		\draw[line width = 0.25mm,densely dashed,gray] (3,-1.73205) -- (5,-1.73205);
		\draw[line width = 0.25mm,densely dashed,gray] (5,-1.73205) -- (4,0);
		\draw[line width = 0.25mm,densely dashed,gray] (4,0) -- (6,0);
		\draw[line width = 0.25mm,densely dashed,gray] (6,0) -- (5,-1.73205);
		\draw[line width = 0.25mm,densely dashed,gray] (6,0) -- (7,-1.73205);
		\draw[line width = 0.25mm,densely dashed,gray] (7,-1.73205) -- (5,-1.73205);
		\draw[line width = 0.25mm,densely dashed,gray] (6,0) -- (8,0);
		\draw[line width = 0.25mm,densely dashed,gray] (8,0) -- (7,-1.73205);
		\draw[line width = 0.25mm,densely dashed,gray] (0,-3.4641) -- (-2,-3.4641);
		\draw[line width = 0.25mm,densely dashed,gray] (-2,-3.4641) -- (-1,-1.73205);
		\draw[line width = 0.25mm,densely dashed,gray]  (-1,-1.73205) -- (0,-3.4641);
		\draw[line width = 0.25mm,densely dashed,gray] (0,-3.4641) -- (1,-1.73205);
		\draw[line width = 0.25mm,densely dashed,gray] (0,-3.4641) -- (2,-3.4641);
		\draw[line width = 0.25mm,densely dashed,gray] (2,-3.4641) -- (1,-1.73205);
		\draw[line width = 0.25mm,densely dashed,gray] (2,-3.4641) -- (3,-1.73205);
		\draw [line width = 0.5mm,blue, name path=true] plot[smooth] coordinates {(-0.75,-3.681818) (1.75,0.75) (8.25,0.75)};
		\draw[line width = 0.5mm,red] (1,1.73205) -- (2.6,2.2);
		\draw[line width = 0.5mm,red] (2.6,2.2) -- (5,1.73205);
		\draw[line width = 0.5mm,red] (5,1.73205) --  (7,2.1);
		\draw[line width = 0.5mm,red] (1,1.73205) -- (0,-0.5);
		\draw[line width = 0.5mm,red] (0,-0.5) -- (-1,-1.73205);
		\draw[line width = 0.5mm,red] (-1,-1.73205) -- (-2,-3.4641);
		\node[text width=0.5cm] at (7.5,2.1) {\large${\color{red}\tG}$};
		\node[text width=3cm] at (1.25,1.25) {\large${\color{red}\tO}$};
		\node[text width=3cm] at (0.25,-3) {\large${\color{blue}\Om}$};
		\node[text width=0.5cm] at (8.75,0.75) {\large${\color{blue}\G}$};
		\node[text width=3cm] at (4.65,1.5) {\large$\Om \setminus \tO $};
		\node[text width=3cm] at (5.5,3.75) {\large$\tO  \subset \Om $};
		\end{tikzpicture}
		\caption{The true domain $\Om$, the surrogate domain $\tO \subset \Om$ and their boundaries $\ti{\G}_{h}$ and $\G$.}
		\label{fig:SBM}
	\end{subfigure}
	\hspace{2.5cm}
	\begin{subfigure}[b]{.4\textwidth}\centering
		\begin{tikzpicture}
		\draw[line width = 0.25mm,densely dashed,gray] (0,0.5) -- (-1.5,3);
		\draw[line width = 0.25mm,densely dashed,gray] (-1.5,3) -- (0.5,5);
		\draw[line width = 0.25mm,densely dashed,gray] (0,0.5) -- (2.5,2);
		\draw[line width = 0.25mm,densely dashed,gray] (2.5,2) -- (0.5,5);
		\draw[line width = 0.25mm,densely dashed,gray] (0.5,5) -- (0,0.5);
		\draw [line width = 0.5mm,blue, name path=true] plot[smooth] coordinates {(1,-0.5) (2.25,2.5) (0.75,6)};
		\draw[line width = 0.5mm,red] (0,0.5) -- (0.5,5);
		\node[text width=0.5cm] at (0.5,5.5) {\large${\color{red}\tG}$};
		\node[text width=0.5cm] at (1.75,5.5) {\large${\color{blue}\G}$};
		\node[text width=0.5cm] at (1.25,3.25) {\large$\bs{d}$};
		\node[text width=0.5cm] at (3,3.5) {\large$\bs{n}$};
		\node[text width=0.5cm] at (2.7,2.25) {\large$\bs{\tau}$};
		\draw[->,line width = 0.25mm,-latex] (0.25,2.75) -- (2.12,3.1);
		\draw[->,line width = 0.25mm,-latex] (2.12,3.1) -- (2.35,2.1);
		\draw[->,line width = 0.25mm,-latex] (2.12,3.1) -- (2.95,3.25);
		\end{tikzpicture}
		\caption{The distance vector $\bs{d}$, the true normal $\bs{n}$ and the true tangent $\bs{\tau}$.}
		\label{fig:ntd}
	\end{subfigure}
	\caption{The surrogate domain, its boundary, and the distance vector $\bs{d}$.}
	\label{fig:surrogates}
\end{figure}
We now select a mapping
\begin{subequations}\label{eq:defMmap}
	\begin{align}
	\bs{M}_{h}:&\; \tG \to \G \; ,  \\
	&\; \ti{\bs{x}} \mapsto \bs{x}   \; ,
	\end{align}
\end{subequations}
which associates to any point $\ti{\bs{x}} \in \tG$ on the surrogate boundary a point $\bs{x} = \bs{M}_{h}(\ti{\bs{x}})$ on the physical boundary $\Gamma$.  Whenever uniquely defined, the closest-point point projection of $\ti{\bs{x}}$ upon $\Gamma$ is a natural choice for $\bs{x}$, as shown e.g. in Figure~\ref{fig:ntd}. But more sophisticated choices may be locally preferable; we refer to \cite{TheoreticalPoissonAtallahCanutoScovazzi2020} for more details.
Through $\bs{M}_{h}$, a distance vector function $\bs{d}_{\bs{M}_{h}}$ can be defined as
\begin{align}
\label{eq:Mmap}
\bs{d}_{\bs{M}_{h}} (\ti{\bs{x}})
\, = \, 
\bs{x}-\ti{\bs{x}}
\, = \, 
[ \, \bs{M}_{h}-\bs{I} \, ] (\ti{\bs{x}})
\; .
\end{align}
For the sake of simplicity, we set $\bs{d} = \bs{d}_{\bs{M}_{h}} $ where $\bs{d} = \|\bs{d}\| \bs{\nu}$  and $\bs{\nu}$ is a unit vector. 
\begin{rem}
	If $\bs{M}_{h}(\ti{\bs{x}})$ does not belong to corners or edges, then $\bs{\nu}=\bs{n}$.
\end{rem}
\begin{rem}
	There are other strategies in the definition of the map $\bs{M}_{h}$ and, correspondingly, the distance vector $\bs{d}$. Among them is a level set description of the true boundary, in which $\bs{d}$ is defined by means of a distance function.
\end{rem}
In case the boundary $\G$ is partitioned into a Dirichlet boundary $\G_{D}$ and a Neumann boundary $\G_{N}$ with $\G = \overline{\G_{D} \cup \G_{N}}$ and $\G_{D} \cap \G_{N} = \emptyset$, we need to identify whether a surrogate edge $\ti{E} \subset \tG$ is associated with $\G_{D}$ or $\G_{N}$. To that end, we partition $\tG$ as $\overline{\tGD \cup \tGN}$ with $\tGD \cap \tGN = \emptyset$ using again a map $\bs{M}_{h}$, such that
\begin{align}\label{def:surrogateDir}
\tGD = \{ \ti{E} \subseteq \tG : \bs{M}_{h}(\ti{E}) \, \subseteq \, \G_{D} \}
\end{align}
and $\tGN = \tG \setminus \tGD$. 

Indicating by $h_T$ ($h^{i}_{T}$, resp.) the circumscribed diameter (inscribed diameter, resp.) of an element $T \in \ti{\cT}_h$ and by $h$ ($h^{i}$, resp.) the piecewise constant function in $\tO$ such that $h_{|T}=h_T$ ($h^{i}_{|T}= h^{i}_T$, resp.) for all $T \in \ti{\cT}_h$, we require that the distance $\| \, \bs{d} \, \|$ goes to zero slightly faster than $h$, as the grid is refined, according to the following 
\begin{assumption}
	\label{ass:d_asym}
	There exist constants $c_d>0$ and $\zeta >0$ such that
	\begin{equation}\label{eq:ass_lbb}
	\| \, \bs{d}(\ti{\bs{x}}) \, \| \leq c_d \, h_T \, \hat{h}_T^{\zeta} \;  \qquad \forall \ti{\bs{x}} \in \tG \cap T, \ \ T \in \ti{\cT}_h \; , 
	\end{equation}
	where
	\begin{equation}\label{eq:hhat}
	\hat{h}_T =  l(\tO)^{-1} \,  h_T   \; .
	\end{equation}
\end{assumption}
\noindent We also introduce the following mesh parameters
\begin{subequations}
\label{eq:mesh-param}
\begin{align}
h_{\tau} 
&:= \; 
( h_T \, h^{i}_{T})^{1/2} \, , \\
h_{\tG} 
&:= \; 
\max_{T\in \tT : T \cap \tG \not = \emptyset} h_T\,, \\
h_{\tO} 
&:= \; 
\max_{T\in \tT} h_T \, , \\
h_{\perp} 
&:= \; 
\frac{ \mathrm{meas}_{n_d}(T)}{\mathrm{meas}_{n_d - 1}(\ti{E})} \;  \quad \forall \ti{E} \in \tG, \ \ T \cap \ti{E} \neq \emptyset, \ \ T \in \ti{\cT}_h \,.
\end{align}
\end{subequations}
\begin{rem}
	The rate of decay of $\| \bs{d} \|$ needs only to be marginally faster than the one of $h$, that is, {\it $\zeta$ can be set to an arbitrarily small positive number}. For example, this condition can be realized in practice by (iteratively) subdividing each of the edges of the mesh into two, and then slightly shifting the location of the nodes on the surrogate boundary along the direction $\bs{d}$. 
\end{rem}
\begin{rem}
	When computing convergence rates in numerical experiments, we found that it was not necessary to enforce Assumption~\ref{ass:d_asym}, and that a standard mesh refinement in which every edge of the grid is split in half was sufficient. Assumption~\ref{ass:d_asym} should therefore be considered as a technical condition for the proofs rather than a practical condition for computations.
\end{rem}
 \begin{rem}
Assumption~\ref{ass:d_asym} effectively replaces the earlier and much more restrictive assumption $\inf_{\tG} \ti{\bs{n}} \cdot \bs{\nu}  > 0$ in~\cite{main2018shifted0}. The latter, as will be shown in the numerical tests of Section~\ref{sec:2DNumerical_Results} and Section~\ref{sec:3DNumerical_Results}, is typically not verified for complex geometries. The fact that stability and convergence can be established without this restriction is one of the main results in this paper (see Section~\ref{sec:sbm_poisson} and Section~\ref{sec:sbm_stokes}), and paves the way to the application of the SBM to very complex geometry problems. Incidentally, Assumption~\ref{ass:d_asym} is also one of the differences between the unfitted SBM approach presented here and the Universal Meshes Method~\cite{rangarajan2014universal}, an unrelated hybrid fitted/unfitted method that however utilizes closest-point projection algorithms and stability conditions analogous to $\inf_{\tG} \ti{\bs{n}} \cdot \bs{\nu}  > 0$. 
\end{rem}

\subsection{General strategy}
The SBM introduced in~\cite{main2018shifted0} discretizes the governing equations in $\tO$ rather than $\Om$. Consequently, the  challenge would be to consistently enforce the boundary conditions on $\tG$. To this end, the SBM resorts to a first-order Taylor expansion of the concerned variable at the surrogate boundary in order to {\it shift} the boundary condition from $\G$ to $\tG$. \\
To illustrate, consider a scalar field $u$ to be the exact solution to a partial differential equation in $\Om$ with a trace $g$ on $\G$. Assuming $u$ is sufficiently smooth in the strip between $\tG$  and $\G$ so as to admit a first-order Taylor expansion pointwise, we can write
\begin{align}
\label{eq:SBM_BC}
u(\tx)+ (\nabla u \cdot \bs{d} )(\tx) + (R (u, \bs{d}))(\tx) = g(\bs{x}) 
\; , \quad \quad \mbox{ on } \tG \;  .
\end{align}
where the remainder $R(u,\bs{d})$ satisfies
\begin{align}
\nonumber
|\, R(u,\bs{d}) \, | = o(\| \, \bs{d} \, \|) \qquad \text{as } \quad \| \, \bs{d} \, \| \to 0\,.
\end{align}
Introducing the function $\bar{g}(\tx):=g(\bs{M}_h (\tx))$ on $\tG$,  we see that the trace of $u$ on $\tG$ satisfies
\begin{equation}\label{eq:u-g}
u+ \nabla u \cdot \bs{d} - \bar{g} + R(u,\bs{d}) \ = \ \tS u - \bar{g} + R_h u = 0 \,,
\end{equation}
where we have introduced the boundary operator 
\begin{equation}\label{eq:def-bndS}
\tS v := v+ \nabla v \cdot \bs{d} \qquad  \text{on \ } \tG
\end{equation}
and $R_h u$ is a short-hand notation for the Taylor expansion remainder $R(u,\bs{d})$.
Neglecting the higher-order term (with respect to $\| \,  \bs{d} \, \| $) in \eqref{eq:u-g}, we obtain the final expression of the {\it shifted} boundary condition
\begin{equation}\label{eq:Finalu-g}
\tS u = \bar{g}  \, , \quad  \mbox{on } \tG \; ,
\end{equation}
which will be weakly enforced on the discretization $u_h$ of $u$ that we are going to introduce.
Similarly, for a vector field $\bs{u}$, we deduce that its trace $\bar{\bs{g}}$ on $\tG$ satisfies
\begin{equation}\label{eq:bsu-g}
\bs{S}_{h} \bs{u} + \bs{R}_h \bs{u} = \bar{\bs{g}} \,,
\end{equation}
where $\bs{S}_{h} \bs{v} := \bs{v}+ \nabla \bs{v} \, \bs{d}$ on $\tG$ and $\bs{R}_h \bs{u} $ is the Taylor expansion remainder of $\bs{u}$ on $\tG$.
Again, neglecting the higher-order term in \eqref{eq:bsu-g}, we obtain the {\it shifted} vector boundary condition 
\begin{equation}\label{eq:Finalbsu-g}
\bs{S}_{h} \bs{u} = \bar{\bs{g}} \, , \quad  \mbox{on } \tG \; .
\end{equation}

\section{The SBM for the Poisson equation \label{sec:sbm_poisson}}
The strong form of the Poisson problem with a  non-homogeneous Dirichlet boundary condition reads
\begin{subequations}
	\label{eq:SteadyPoisson}
\begin{align}
- \Delta u &=\; f  \qquad \text{\ \ in \ } \Om \; ,
 \\
u &=\;  u_D \qquad \text{on \ }  \G = \partial\Om\; ,
\end{align}
\end{subequations}
where $u$ is the primary variable, $u_D$ its value on the boundary $\G$ and $f$ a body force. 
\subsection{Existence, uniqueness and regularity of the infinite dimensional problem}
Let us denote by $H^{1/2}(\Om)$ a fractional trace space (typically associated with $H^1(\Om)$), and $H^{-1}(\Om)$ the dual (space) of 
$H^{1}_0(\Om)$.
The well-posedness of the infinite dimensional problem is discussed, for example, in~\cite{AErn:2004a} (Theorem 5.1, p. 80), with the following result:
\begin{thm}[Well-posedness of the exact problem]
\label{thm:PoissonExistenceUniqueness}
Let $\Om$ be a bounded and connected open subset of $\mathbb{R}^{n_d}$ with Lipschitz-continuous boundary $\G$.
Given $f \in H^{-1}(\Om)$ and $u_D \in H^{1/2}(\G)$, there exists a unique solution  $u \in H^1(\Om)$ of Problem \eqref{eq:SteadyPoisson}.
Furthermore, if the boundary $\G$ is of class $\mathcal{C}^{2}$,  $f \in L^2(\Om)$ and $u_D \in H^{3/2}(\G)$, then $u\in H^{2}(\Om) $ and satisfies
\begin{align}\label{eq:reg-estim}
\| \, u \, \|_{2,\Om} \leq\; C \, \left(\| \, f \, \|_{0,\Om} + \| \, u_D \, \|_{3/2,\G} \right) 
\end{align}
for a constant $C$ independent of $f$ and $u_D$. 
\end{thm}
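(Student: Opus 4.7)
The proof reduces to two standard ingredients: Lax–Milgram for existence and uniqueness in $H^1$, and classical elliptic regularity for the $H^2$-bound.

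First I would homogenize the boundary data. Since $u_D \in H^{1/2}(\G)$, the surjectivity of the trace operator $\gamma_0:H^1(\Om)\to H^{1/2}(\G)$ (for a Lipschitz domain) provides a lift $u_D^\ast \in H^1(\Om)$ with $\gamma_0 u_D^\ast = u_D$ and $\|u_D^\ast\|_{1,\Om}\le C\|u_D\|_{1/2,\G}$. Setting $w := u-u_D^\ast$, the problem becomes: find $w\in H^1_0(\Om)$ such that $-\Delta w = f+\Delta u_D^\ast$ in $H^{-1}(\Om)$. The associated weak formulation seeks $w \in H^1_0(\Om)$ with
\begin{equation}
\int_\Om \nabla w \cdot \nabla v \,d\bs{x} \;=\; \langle f,v\rangle_{H^{-1},H^1_0} - \int_\Om \nabla u_D^\ast\cdot\nabla v\,d\bs{x} \qquad \forall v\in H^1_0(\Om).
\end{equation}

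Next I would invoke Lax–Milgram. The bilinear form $a(w,v)=\int_\Om \nabla w\cdot\nabla v$ is continuous on $H^1_0(\Om)\times H^1_0(\Om)$ by Cauchy–Schwarz, and coercive by the Poincaré inequality (which holds because $\Om$ is bounded and connected with nonempty Dirichlet part on its Lipschitz boundary). The linear functional on the right-hand side is bounded by $C(\|f\|_{-1,\Om}+\|u_D\|_{1/2,\G})$. Lax–Milgram then yields a unique $w\in H^1_0(\Om)$, and hence a unique $u=w+u_D^\ast\in H^1(\Om)$, with the corresponding continuous dependence estimate in $H^1$.

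For the regularity improvement I would appeal to the standard elliptic regularity theorem for the Laplacian on a domain with $\mathcal{C}^2$ boundary (for instance the shift theorem in Grisvard, or Theorem 3.10/3.12 of Ern–Guermond): given $f\in L^2(\Om)$ and a Dirichlet datum in $H^{3/2}(\G)$, the weak $H^1$-solution belongs to $H^2(\Om)$ and satisfies
\begin{equation}
\|u\|_{2,\Om} \;\le\; C\bigl(\|f\|_{0,\Om}+\|u_D\|_{3/2,\G}\bigr),
\end{equation}
with $C$ depending only on $\Om$. Combining this with the uniqueness from the first step yields \eqref{eq:reg-estim}. No part of this argument is genuinely difficult here: the only delicate point is invoking the $H^2$ elliptic regularity theorem, whose proof (by tangential difference quotients near $\G$ after straightening the boundary, plus interior regularity) is the one substantive ingredient, but it is purely a citation in our context.
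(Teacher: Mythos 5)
Your argument is correct and coincides with the standard proof behind the result the paper simply cites (Theorem 5.1 of Ern--Guermond): the paper offers no proof of its own, only the reference. Your lifting-plus-Lax--Milgram step and the appeal to $H^2$ elliptic regularity on a $\mathcal{C}^2$ boundary are exactly the ingredients of that cited theorem, so there is nothing to add.
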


As a point of departure in the development of the SBM discretization, we just assume $\Gamma$ to be Lipschitz-continuous, $f \in L^2(\Omega)$ and $u_D \in H^{1/2}(\Gamma)$. Later on, we will make stronger assumptions.

\subsection{Weak discrete formulation}
\label{sec:weak_discr_poisson}
Discretizing Problem~\eqref{eq:SteadyPoisson} in $\tO$ and enforcing~\eqref{eq:Finalu-g} on $\tG$ with $\bar{g} = \bar{u}_D$ through Nitsche's method~\cite{nitscheweak,arnold2002unified}, we deduce the following SBM Galerkin discretization of Problem~\eqref{eq:SteadyPoisson}: 
\begin{quote}
Find $u_h \in V_h(\tO)$ such that, $\forall w_h \in V_h(\tO)$
\begin{align}
\label{eq:DiscreteShiftedNitscheVariationalFormSteadyPoisson}
& ( \nabla u_h \, , \, \nabla w_h )_{\tO} 
- \avg{ \nabla u_h  \cdot \ti{\bs{n}} \, , \,  w_h}_{\tG}
- \avg{ \tS u_h  \, , \, \nabla w_h \cdot \ti{\bs{n}}}_{\tG}
+\avg{ \alpha \,  h_{\perp}^{-1} \,  \tS u_h \, , \,  \tS w_h }_{\tG} 
\nonumber \\ 
& \hskip 3.5cm    =\;
(f \, , \, w_h )_{\tO} 
- \avg{ \bar{u}_D \, , \, \nabla w_h \cdot \ti{\bs{n}} }_{\tG}
+\avg{ \alpha \,  h_{\perp}^{-1} \,  \bar{u}_D \, ,  \, \tS w_h  }_{\tG}
\, ,
\end{align}
 where $ V_h(\tO)  = \; \left\{ v_h \in C^0(\tO)  \ | \ {v_h}_{|T} \in \mathcal{P}^1(T)  \, , \, \forall T \in \ti{\mathcal{T}}_h \right\}  \, $. 
 \end{quote}
In what follows, besides the shape-regularity of the grids, we will assume that there exist two global constants $\xi_{1} $, $\xi_{2} \in \mathbb{R}^{+}$ such that $\xi_{1} \, h \leq h_{\perp} \leq \xi_{2} \, h$. 
With slight abuse of notation, we will always assume that $h$ and $h_{\perp}$ are interchangeable.
For the sake of completeness, we rewrite \eqref{eq:DiscreteShiftedNitscheVariationalFormSteadyPoisson} using the classical notation with linear and bilinear forms: 
\begin{quote}
Find $u_h \in V_h(\tO)$ such that, $\forall w_h \in V_h(\tO)$
\begin{subequations}
\label{eq:SB_Poisson_uns}
\begin{align}
a_h(u_h \, , \, w_h) 
&=\; 
l_h(w_h) 
\; ,
\end{align}
where
\begin{align}
\label{eq:UnsymmetricShiftedNitscheBilinearForm}
a_h(u_h \, , \, w_h) 
&=\; 
( \nabla u_h \, , \, \nabla w_h  )_{\tO} 
- \avg{ \nabla u_h  \cdot \ti{\bs{n}} \, , \, \tS w_h }_{\tG}
-  \avg{ \tS u_h \, , \, \nabla w_h \cdot \ti{\bs{n}} }_{\tG}
\nonumber \\
& \quad 
+ \avg{ \alpha \,  h^{-1} \,  \tS u_h \, , \, \tS w_h }_{\tG}
+ \avg{  \nabla u_h  \cdot \ti{\bs{n}} \, , \, \nabla w_h \cdot \bs{d} }_{\tG}
\; ,
\\[.2cm]
\label{eq:UnsymmetricShiftedNitscheRHS}
l_h(w_h) 
&=\; 
(f \, , \, w_h )_{\tO} 
- \avg{ \bar{u}_D \, , \, \nabla w_h \cdot \ti{\bs{n}} }_{\tG}
+\avg{ \alpha \, h^{-1} \,  \bar{u}_D \, ,  \, \tS w_h  }_{\tG} \; .
\end{align}
\end{subequations}
\end{quote}
\begin{rem}
Despite utilizing a symmetric form of Nitsche's method, the bilinear form $a_h(u_h \, , \, w_h)$ is not symmetric in general, because of the presence of the term $\avg{  \nabla u_h  \cdot \ti{\bs{n}} \, , \, \nabla w_h \cdot \bs{d} }_{\tG}$.
\end{rem}
\begin{rem}
Formulation~\eqref{eq:SB_Poisson_uns} does not include a tangential stabilization term that was introduced ~\cite{main2018shifted0} to help in the proof of coercivity.
This simplifies the implementation and avoids having to calibrate  an additional numerical parameter. 
It will be clearer from what follows that coercivity can be proved by simply relying on Assumption~\ref{ass:d_asym}.
\end{rem}

\subsection{Well-posedness and stability}
The first step in our analysis of the SBM is to prove that, {\it for sufficiently fine grids}, $a_h(u_h\, , \, w_h)$ is uniformly coercive. This will immediately imply the existence and uniqueness of the solution of the discrete SBM problem.
Later on, this coercivity result will be used to prove optimal error convergence in the natural norm. 
\begin{thm}[Coercivity]
\label{thm:coerc-aPoisson}
Consider the bilinear form $a_h(u_h\, , \, w_h)$ defined in~\eqref{eq:SB_Poisson_uns}. If the parameter $\alpha$ is sufficiently large and the quantity $\hat{h}_{\tG}$ defined in \eqref{eq:hhat} is sufficiently small, then there exists a constant $C_a>0$ independent of the mesh size, such that	
\begin{equation} \label{eq:17}
a_h(u_h\,, \,u_h) \geq  C_{a} \, \| \, u_h \, \|^2_{a}  \qquad \forall u_h \in V_h(\tO) \; ,
\end{equation}
where $ \| \, u_h \, \|_{a}^2  = \| \, \nabla u_h \, \|^2_{0,\tO}   + \| \, h^{-1/2} \, \tS u_h \, \|^2_{0,\tG}  \; . $
\end{thm}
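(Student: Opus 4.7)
The plan is the standard Nitsche-type coercivity argument, but carried out carefully enough to expose the role of Assumption~\ref{ass:d_asym}. Setting $w_h = u_h$ in \eqref{eq:UnsymmetricShiftedNitscheBilinearForm}, the two symmetric ``consistency'' terms collapse into a single cross term and we obtain
\begin{equation*}
a_h(u_h,u_h) = \|\nabla u_h\|_{0,\tO}^2 + \alpha\,\|h^{-1/2}\,\tS u_h\|_{0,\tG}^2 \; - \; 2\,\avg{\nabla u_h\cdot\tn,\,\tS u_h}_{\tG} \; + \; \avg{\nabla u_h\cdot\tn,\,\nabla u_h\cdot\bs d}_{\tG}.
\end{equation*}
The goal is to absorb the last two (indefinite) boundary terms into the two positive contributions, up to a positive multiplicative constant. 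The essential tool is the discrete trace inequality $\|h^{1/2}\nabla v_h\cdot\tn\|_{0,\tG}\leq C_{\mathrm{tr}}\|\nabla v_h\|_{0,\tO}$ valid for $v_h\in V_h(\tO)$, a consequence of shape-regularity and the equivalence $h \simeq h_\perp$.

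For the Nitsche cross term, I would apply Cauchy--Schwarz followed by a weighted Young's inequality,
\begin{equation*}
2\,\big|\avg{\nabla u_h\cdot\tn,\,\tS u_h}_{\tG}\big| \;\leq\; \epsilon\,\|h^{1/2}\nabla u_h\cdot\tn\|_{0,\tG}^2 \;+\; \epsilon^{-1}\,\|h^{-1/2}\tS u_h\|_{0,\tG}^2 \;\leq\; \epsilon\,C_{\mathrm{tr}}^2\,\|\nabla u_h\|_{0,\tO}^2 \;+\; \epsilon^{-1}\,\|h^{-1/2}\tS u_h\|_{0,\tG}^2,
\end{equation*}
with a free parameter $\epsilon>0$ to be fixed small later. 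So far this is the textbook Nitsche argument.

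The main obstacle, and the step where Assumption~\ref{ass:d_asym} enters decisively, is the asymmetric term $\avg{\nabla u_h\cdot\tn,\,\nabla u_h\cdot\bs d}_{\tG}$: without the old hypothesis $\inf_{\tG}\tn\cdot\bs\nu>0$ one cannot argue by sign. Instead I would bound it by Cauchy--Schwarz, use $|\bs d(\tx)|\leq c_d\,h_T\,\hat h_T^{\zeta}$ from \eqref{eq:ass_lbb} to pull out a factor $h_{\tG}\hat h_{\tG}^{\zeta}$, and then apply the discrete trace inequality twice, once to $\|\nabla u_h\cdot\tn\|_{0,\tG}$ and once to $\|\nabla u_h\|_{0,\tG}$, which yields
\begin{equation*}
\big|\avg{\nabla u_h\cdot\tn,\,\nabla u_h\cdot\bs d}_{\tG}\big| \;\leq\; C_{\mathrm{tr}}^2\,c_d\,\hat h_{\tG}^{\zeta}\,\|\nabla u_h\|_{0,\tO}^2 .
\end{equation*}
The crucial point is that the two inverse $h^{-1/2}$ powers coming from the trace inequalities are exactly cancelled by one power of $h$ from $|\bs d|$, leaving a small factor $\hat h_{\tG}^{\zeta}$ that tends to zero under mesh refinement.

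Collecting everything,
\begin{equation*}
a_h(u_h,u_h) \;\geq\; \bigl(1-\epsilon\,C_{\mathrm{tr}}^2 - C_{\mathrm{tr}}^2\,c_d\,\hat h_{\tG}^{\zeta}\bigr)\,\|\nabla u_h\|_{0,\tO}^2 \;+\; \bigl(\alpha-\epsilon^{-1}\bigr)\,\|h^{-1/2}\tS u_h\|_{0,\tG}^2.
\end{equation*}
I would then fix $\epsilon$ small enough that $\epsilon\,C_{\mathrm{tr}}^2\leq 1/4$, impose $\hat h_{\tG}$ small enough (via Assumption~\ref{ass:d_asym}) that $C_{\mathrm{tr}}^2 c_d\hat h_{\tG}^{\zeta}\leq 1/4$, and finally take $\alpha$ sufficiently large that $\alpha-\epsilon^{-1}\geq C_a>0$. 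This yields \eqref{eq:17} with $C_a = \min(1/2,\,\alpha-\epsilon^{-1})$. I anticipate the only subtle point is justifying that the discrete trace inequality applies uniformly on $\tG$ — this follows from shape-regularity of $\tT_h$ and the assumption $\xi_1 h\leq h_\perp\leq \xi_2 h$, and is where the absence of the tangential-derivative stabilization term becomes a non-issue, since the asymmetric $\bs d$-term alone has been tamed by Assumption~\ref{ass:d_asym}.
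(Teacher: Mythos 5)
Your proposal is correct and follows essentially the same route as the paper's proof: expand $a_h(u_h,u_h)$, absorb the Nitsche cross term via Young's inequality and the discrete trace inequality, and control the asymmetric $\bs{d}$-term by combining Cauchy--Schwarz, Assumption~\ref{ass:d_asym}, and the trace inequality to produce the small factor $c_d\,\hat h_{\tG}^{\zeta}$, before fixing $\epsilon$, $\hat h_{\tG}$ and $\alpha$. The only differences are cosmetic (your $C_{\mathrm{tr}}^2$ is the paper's $C_I$, and your particular choices of $\epsilon$ yield $\min(1/2,\alpha-\epsilon^{-1})$ instead of the paper's $\min(1/4,\alpha-2C_I)$).
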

\begin{proof}
By substitution, we have
\begin{align}
a_h(u_h \, , \, u_h) 
&=\; 
\| \, \nabla u_h \, \|^2_{0,\tO} 
- 2 \avg{\tS u_h \, , \, \nabla u_h  \cdot \ti{\bs{n}} }_{\tG} 
+ \avg{ \nabla u_h \cdot \bs{d} \, , \, \nabla u_h  \cdot \ti{\bs{n}} }_{\tG}
+ \alpha \, \| \, h^{-1/2} \, \tS u_h \, \|^2_{0, \tG}
 \; .
\end{align}
Young's $\epsilon$-inequality and  the discrete trace inequalities in Theorem \ref{theo:discretetrace} of \ref{sec:appendix_a} yield
\begin{align}
\label{eq:23}
a_h(u_h\, , \, u_h) 
&\geq\; 
(1-\epsilon_{1} \, C_{I}) \, \| \, \nabla u_h \, \|^2_{0,\tO} 
+ \avg{ \nabla u_h \cdot \bs{d} \, , \, \nabla u_h  \cdot \ti{\bs{n}} }_{\tG}
+ \left(\alpha - \epsilon^{-1}_{1}\right) \, \| \, h^{-1/2} \, \tS u_h \, \|^2_{0, \tG}
\; .
\end{align}
Applying Assumption~\ref{ass:d_asym} to \eqref{eq:23} and recalling that $\bs{d} = \| \, \bs{d} \, \| \bs{\nu}$ give
\begin{align}
a_h(u_h\, , \,u_h) 
&\geq\; 
(1-\epsilon_{1} \, C_{I}) \, \| \, \nabla u_h \, \|^2_{0,\tO} 
- \| \, \|\bs{d} \,\|^{1/2} \, \nabla u_h \cdot \bs{\nu} \, \|_{0,\tG} \, \| \, \| 
\, \bs{d} \, \|^{1/2} \, \nabla u_h \cdot \ti{\bs{n}} \, \|_{0,\tG} 
\nonumber \\
&\phantom{\geq}\;
+ \left(\alpha - \epsilon^{-1}_{1} \, \right) \, \| \, h^{-1/2} \, \tS u_h \, \|^2_{0, \tG}
\nonumber \\
&\geq\;
(1-\epsilon_{1} \, C_{I}) \, \| \, \nabla u_h \, \|^2_{0,\tO} 
- \| \, \| \, \bs{d}\, \|^{1/2} \, \nabla u_h \, \|^{2}_{0, \tG} 
+ \left(\alpha -  \epsilon^{-1}_{1} \right) \, \| \, h^{-1/2} \, \tS u_h \,  \|^2_{0, \tG}
\nonumber \\
&\geq\;
(1-\epsilon_{1}\, C_{I} -c_{d} \, \hat{h}_{\tG}^{\zeta} \, C_{I}) \,	\| \,  \nabla u_h \, \|^2_{0,\tO} 
+ \left(\alpha -  \epsilon^{-1}_{1} \right) \, \| \, h^{-1/2} \, \tS u_h \, \|^2_{0, \tG}
\; .
\end{align}
Taking $\epsilon_{1} = (2 \, C_{I})^{-1}$ and considering sufficiently fine grids so that $c_{d} \, \hat{h}_{\tG} \leq (4 \, C_{I})^{-1}$ imply
\begin{align}
a_h(u_h\, , \,u_h) 
&\geq\;
4^{-1}	\, \| \, \nabla u_h \, \|^2_{0,\tO} 
+ \left(\alpha - 2C_{I} \right) \, \| \, h^{-1/2} \, \tS u_h \, \|^2_{0, \tG}
\; .
\end{align}
Enforcing $\alpha > 2 \, C_{I}$ yields the coercivity statement \eqref{eq:17} with  $C_{a}  =\;  \min \left(  1/4 \, , \,  \alpha - 2C_{I} \right) \; .$
\end{proof}
\begin{rem}
In the case of linear polynomials, the discretization scheme \eqref{eq:SB_Poisson_uns} bears some similarities to the one proposed in \cite{bramble1972projection}, a reference that the authors of~\cite{main2018shifted0,main2018shifted} were unaware of.
In the present work, however, we admit a much greater freedom in the choice of the mapping $\bs{M}_{h}: \tO \to \Omega$, that is, our vector $\bs{d}$ {\it does not need} to be aligned with $\ti{\bs{n}}$ as in \cite{bramble1972projection} and can be chosen in such a way to account for general domains whose boundaries have a finite number of corners and/or edges~\cite{TheoreticalPoissonAtallahCanutoScovazzi2020}. 
Although this difference seems mild at first sight, in reality it makes the work in~\cite{bramble1972projection} of not easy applicability in practical engineering problems, and this might explain why a work dating almost half a century ago has gone relatively unnoticed in the community.
On the other hand, our Assumption \ref{ass:d_asym} is comparable to the general assumption made in  \cite{bramble1972projection} on the distance between the surrogate and physical boundaries; correspondingly, our coercivity result, Theorem \ref{thm:coerc-aPoisson} above, is similar to Lemma 6 in~\cite{bramble1972projection}.

Interestingly, much of the emphasis in~\cite{bramble1972projection} and later works~\cite{bramble1996finite,bramble1994robust,burman2018cut} was on perturbations of body-fitted grids, for which $\|\bs{d}\| \sim h^2$, despite many of the results of~\cite{bramble1972projection} apply in a broader sense. From our perspective, the case $\|\bs{d}\| \sim h^{1+\zeta}$ - for $\zeta$ positive and arbitrarily small - is significantly more interesting in computational engineering applications.
\end{rem}

\subsection{Consistency and convergence analysis  \label{sec:ShiftedNitscheAccuracyAnalysis}}
From now on, we pose the following regularity assumption on the exact solution $u$ of our Dirichlet problem (see \cite{TheoreticalPoissonAtallahCanutoScovazzi2020}):
\begin{assumption}
	\label{ass:reg-dom}
Assume that $\G$ is of class $\mathcal{C}^{2}$,  $f \in L^2(\Om)$ and $u_D \in H^{3/2}(\G)$.
\end{assumption}
As a consequence, Theorem \ref{thm:PoissonExistenceUniqueness} guarantees $u \in H^2(\Omega)$ with  the estimate \eqref{eq:reg-estim}. This assumption allows us to keep technicalities at a minimum in the subsequent consistency and convergence analysis. However, we mention that it can be weakened to include e.g. domains with a finite number of corners and/or edges (see \cite{TheoreticalPoissonAtallahCanutoScovazzi2020}), at the expense of more cumbersome mathematical arguments.

The proof of convergence of the SBM for Problem~\eqref{eq:SteadyPoisson} using the natural norm $\| \, \cdot \, \|_{a}$, relies on Strang's Second Lemma. 
With this goal, we introduce the infinite dimensional space 
\begin{align}
V(\tO;\tT) & = V_h(\tO) +  H^2(\tO)\;  \subset H^2(\tO;\ti{\mathcal{T}}_h)\, \;  , 
\end{align}
 an extension of the finite dimensional space $V_h(\tO)$ that contains the exact solution $u$, that is $u \in V(\tO;\tT)$. 
Here $H^2(\tO;\ti{\mathcal{T}}_h)= \prod_{T \in \ti{\mathcal{T}}_h} H^2(T)$ with `broken' norm $\| \cdot \|_{2,\tO, \ti{\mathcal{T}}_h}= \sum_{T \in \ti{\mathcal{T}}_h} \| \cdot \|_{2,T}$.
It is easily checked that the form $a_h(\cdot,\cdot)$ is well-defined also on the space $V(\tO;\tT) \times V_h(\tO)$. We associate to  $V(\tO;\tT)$ the norm
\begin{align}
\| \, v \, \|^2_{V(\tO;\tT)} 
& = 
\| \, v \, \|^2_{a} 
+  |\, h\, w \, |^2_{2,\tO,\ti{\mathcal{T}}_h} \, ,
\end{align}
and we note that if $v  \in V_h(\tO)$, then $\| \, v \, \|_{V(\tO;\tT)} = \| \,v \, \|_{a}$.
At this point, we are ready to state  
\begin{lemma}[Strang's Second Lemma]
	\label{thm:SecondStrangLemma}
	If $u_h \in V_h(\tO)$ is the solution of \eqref{eq:SB_Poisson_uns}, then
\begin{subequations}
\begin{align}\label{eq:strang}
\| \,u - u_h \, \|_{V(\tO;\tT)} 
& \leq 
\left( 1 + C^{-1}_{a} \, \|\, a_h \, \|_{V_h(\tO) \times V_h(\tO)} \right) \, E_{a,h}(u)
+ C^{-1}_{a} \, E_{c,h}(u)
\; .
\end{align}
where 
\begin{equation}\label{eq:strang-2}
E_{a,h}(u) = \inf_{w_h \in V_h(\tO)} \| \, u -w_h \, \|_{V(\tO;\tT)}
\end{equation}
is the approximation error and
\begin{equation}\label{eq:strang-3}
E_{c,h}(u)  = \sup_{w_h \in V_h(\tO)} \frac{| \, l_h(w_h)-a_h(u\, , \,w_h) \, |}{\| \, w_h \, \|_{V(\tO;\tT)}}
\end{equation}
\end{subequations}
is the consistency error.
\begin{proof}
The proof is classic, see, e.g., \cite{AErn:2004a} for details.
\end{proof}
\end{lemma}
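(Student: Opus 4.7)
The plan is to follow the standard template for Strang's Second Lemma, adapted to the SBM setting. Since the norm $\|\cdot\|_{V(\tO;\tT)}$ coincides with $\|\cdot\|_a$ on $V_h(\tO)$, coercivity of $a_h$ (Theorem~\ref{thm:coerc-aPoisson}) controls the error between any two discrete functions, while the gap between the continuous solution $u$ and its best approximation is measured through $\|a_h\|_{V_h(\tO)\times V_h(\tO)}$ and through the non-Galerkin-orthogonality quantified by $E_{c,h}(u)$.

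The first step is to introduce an arbitrary $w_h \in V_h(\tO)$ and apply the triangle inequality
\begin{equation*}
\|\,u-u_h\,\|_{V(\tO;\tT)} \;\leq\; \|\,u-w_h\,\|_{V(\tO;\tT)} + \|\,w_h-u_h\,\|_{V(\tO;\tT)} \,.
\end{equation*}
Since $w_h-u_h \in V_h(\tO)$, the second norm reduces to $\|\,w_h-u_h\,\|_a$, and I can invoke coercivity to write
\begin{equation*}
C_a\,\|\,w_h-u_h\,\|_a^2 \;\leq\; a_h(w_h-u_h,\,w_h-u_h)\,.
\end{equation*}

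Next I would split the right-hand side as
\begin{equation*}
a_h(w_h-u_h,\,w_h-u_h) \;=\; a_h(w_h-u,\,w_h-u_h) \;+\; a_h(u,\,w_h-u_h) - a_h(u_h,\,w_h-u_h)\,.
\end{equation*}
The first piece is bounded by $\|a_h\|_{V_h(\tO)\times V_h(\tO)}\,\|\,u-w_h\,\|_{V(\tO;\tT)}\,\|\,w_h-u_h\,\|_{V(\tO;\tT)}$ (this is where the operator norm of $a_h$ naturally enters, provided one has extended $a_h$ to the larger space $V(\tO;\tT)\times V_h(\tO)$, which the text has already remarked is well defined). For the remaining pieces, I use that $u_h$ is the discrete solution, so $a_h(u_h,w_h-u_h)=l_h(w_h-u_h)$, whence
\begin{equation*}
a_h(u,\,w_h-u_h) - l_h(w_h-u_h) \;\leq\; E_{c,h}(u)\,\|\,w_h-u_h\,\|_{V(\tO;\tT)}
\end{equation*}
by the very definition~\eqref{eq:strang-3} of the consistency error.

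Dividing by $\|\,w_h-u_h\,\|_{V(\tO;\tT)}$ gives
\begin{equation*}
C_a\,\|\,w_h-u_h\,\|_{V(\tO;\tT)} \;\leq\; \|a_h\|_{V_h(\tO)\times V_h(\tO)}\,\|\,u-w_h\,\|_{V(\tO;\tT)} + E_{c,h}(u)\,.
\end{equation*}
Plugging this into the initial triangle inequality and then taking the infimum over $w_h \in V_h(\tO)$ yields exactly \eqref{eq:strang}. The argument is essentially bookkeeping; the only mildly delicate point is making sure the constant $\|a_h\|_{V_h(\tO)\times V_h(\tO)}$ really controls the cross term $a_h(w_h-u,\,w_h-u_h)$ in the mixed norm, which is why we equipped $V(\tO;\tT)$ with a norm that restricts to $\|\cdot\|_a$ on $V_h(\tO)$. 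Because the argument is entirely abstract and does not touch the SBM-specific structure, there is no genuine obstacle, and deferring details to~\cite{AErn:2004a} is legitimate.
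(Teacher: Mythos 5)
Your proof is correct and is precisely the classical argument that the paper defers to by citing Ern--Guermond: triangle inequality, coercivity on $V_h(\tO)$, splitting into a continuity term and a consistency term, then taking the infimum over $w_h$. You also rightly flag the one genuine subtlety -- that the cross term $a_h(w_h-u,\,w_h-u_h)$ needs boundedness of $a_h$ on $V(\tO;\tT)\times V_h(\tO)$ rather than merely on $V_h(\tO)\times V_h(\tO)$ as written in the statement -- which the paper itself resolves only later via Proposition~\ref{thm:PoissonContinuity}.
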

From Theorem~\ref{thm:coerc-aPoisson}, we already have that $C_{a}$ is a constant independent of the mesh size. Hence, to estimate the discretization error in the norm $\| \cdot \|_{V(\tO;\tT)}$ we need to prove that $\|\, a_h \, \|_{V_h(\tO) \times V_h(\tO)}$ is bounded from above and to estimate the approximation and consistency errors in terms of the mesh size $h_{\tO}$. 
\begin{prop}[Boundedness]
	\label{thm:PoissonContinuity}
	There exists a constant $C_{\cal A} >0$, independent of the mesh size, such that 
	\begin{align}
	a_h(u, w ) 
	\leq 
	C_{\cal A}\, \|\,  u \, \|_{V(\tO;\tT)} \; \|\,  w \, \|_{V(\tO;\tT)}  \qquad \forall u,w \in V(\tO;\tT)\;. 
	\end{align}
\end{prop}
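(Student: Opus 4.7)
The plan is to bound each of the five terms of $a_h(u,w)$ by a product of factors already controlled by $\|u\|_{V(\tO;\tT)}$ and $\|w\|_{V(\tO;\tT)}$, obtained through Cauchy--Schwarz, Assumption~\ref{ass:d_asym}, and a trace inequality that works uniformly on the broken space $V(\tO;\tT) = V_h(\tO) + H^2(\tO)$.

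First I would record the key trace estimate: for every $v \in V(\tO;\tT)$,
\begin{equation*}
\|\, h^{1/2}\, \nabla v \cdot \tn \, \|_{0,\tG}^{2}
\;\leq\; C\,\bigl(\, \|\, \nabla v\, \|_{0,\tO}^{2} + |\, h\, v \, |_{2,\tO,\tT}^{2}\,\bigr)
\;\leq\; C\, \|\, v\, \|_{V(\tO;\tT)}^{2}.
\end{equation*}
On the $V_h(\tO)$ component this follows from the discrete trace inequality (Theorem~\ref{theo:discretetrace}) since the second-derivative term vanishes; on the $H^{2}(\tO)$ component it is the standard scaled trace inequality applied element-by-element on those $T\in\tT$ that touch $\tG$. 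Splitting any $v\in V(\tO;\tT)$ as $v_h+v_\flat$ with $v_h\in V_h$ and $v_\flat\in H^2$, adding the two bounds, and reabsorbing the splitting (using that the best-split norm is equivalent to $\|v\|_{V(\tO;\tT)}$ up to absolute constants) delivers the displayed estimate.

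Next I would take the five terms of~\eqref{eq:UnsymmetricShiftedNitscheBilinearForm} in order. The volumetric term $(\nabla u,\nabla w)_{\tO}$ is immediately bounded by $\|\nabla u\|_{0,\tO}\|\nabla w\|_{0,\tO}$, hence by $\|u\|_{V(\tO;\tT)}\|w\|_{V(\tO;\tT)}$. For the two Nitsche consistency terms $\langle \nabla u\cdot\tn,\tS w\rangle_{\tG}$ and $\langle \tS u, \nabla w\cdot \tn\rangle_{\tG}$, I would introduce an $h^{1/2}/h^{-1/2}$ weighting, apply Cauchy--Schwarz, and then use the trace estimate above on the gradient factor and the defining norm for the $\tS$ factor. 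The penalty term $\langle \alpha\, h^{-1}\tS u, \tS w\rangle_{\tG}$ is bounded directly by $\alpha\,\|h^{-1/2}\tS u\|_{0,\tG}\|h^{-1/2}\tS w\|_{0,\tG}$.

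The only term requiring a separate argument is the non-symmetric contribution $\langle \nabla u\cdot \tn, \nabla w\cdot\bs{d}\rangle_{\tG}$. Writing $\bs{d}=\|\bs{d}\|\bs{\nu}$ and invoking Assumption~\ref{ass:d_asym} to pointwise estimate $\|\bs{d}\|\leq c_d\, h\,\hat h_{\tG}^{\zeta}$, Cauchy--Schwarz gives
\begin{equation*}
\bigl|\langle \nabla u\cdot \tn, \nabla w\cdot\bs{d}\rangle_{\tG}\bigr|
\;\leq\; c_d\,\hat h_{\tG}^{\zeta}\;\|\,h^{1/2}\nabla u\,\|_{0,\tG}\,\|\,h^{1/2}\nabla w\,\|_{0,\tG},
\end{equation*}
and another application of the trace estimate closes the bound by $C\,\hat h_{\tG}^{\zeta}\,\|u\|_{V(\tO;\tT)}\|w\|_{V(\tO;\tT)}$, which is in fact $o(1)$ as the mesh is refined. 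Summing the five contributions and taking $C_{\cal A}$ equal to the maximum of the constants produced yields the desired continuity bound, uniformly in $h$.

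The main obstacle is the trace inequality on $V(\tO;\tT)$: one must carefully separate the polynomial and $H^2$ contributions so that the single-derivative bound does not lose a factor of $h^{-1/2}$ on the smooth part, while simultaneously respecting the inverse scaling on the discrete part. Once this ingredient is in place, the remaining estimates are routine Cauchy--Schwarz manipulations, with Assumption~\ref{ass:d_asym} playing exactly the same role it played in the coercivity proof, namely turning the presence of $\bs{d}$ into a harmless $\hat h_{\tG}^{\zeta}$ factor.
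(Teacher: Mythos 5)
Your proposal is correct and follows essentially the same route as the paper: Cauchy--Schwarz on each of the five terms of $a_h$, with $h^{\pm 1/2}$ weights on the boundary terms, the scaled trace inequality to control $\| h^{1/2}\nabla v\cdot\tn\|_{0,\tG}$ by $\|v\|_{V(\tO;\tT)}$, and Assumption~\ref{ass:d_asym} to absorb $\bs{d}$ in the non-symmetric term. The one place where you over-complicate is the trace estimate you call the ``main obstacle'': no splitting $v=v_h+v_\flat$ is needed, because Theorem~\ref{theo:BoundaryTraceIneq} is already stated for the broken space $H^1(\tO)\cap H^2(\tO,\ti{\mathcal{T}}_h)\supset V(\tO;\tT)$ (element-wise it holds for any $H^2(T)$ function, piecewise linears included, with the scaled second-derivative term simply vanishing on the discrete part), so it applies directly and yields $\| h^{1/2}\nabla v\cdot\tn\|^2_{0,\tG}\le C_I(\|\nabla v\|^2_{0,\tO}+| h v|^2_{2,\tO,\ti{\mathcal{T}}_h})\le C_I\|v\|^2_{V(\tO;\tT)}$; your alternative via ``reabsorbing the splitting'' rests on an equivalence between $\|\cdot\|_{V(\tO;\tT)}$ and a best-split (infimum) norm that you do not prove and that the paper never needs.
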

\begin{proof}
Using  the Cauchy-Schwartz inequality and Theorem~\ref{theo:BoundaryTraceIneq}, we obtain:
\begin{align}
a_h(u\, , \, w) 
&=\; 
( \nabla u \, , \, \nabla w )_{\tO} 
- \avg{ \nabla u  \cdot \ti{\bs{n}} \, , \, \tS w }_{\tG}
- \avg{ \tS u \, , \,  \nabla w \cdot \ti{\bs{n}}}_{\tG}
+\avg{ \nabla u \cdot \ti{\bs{n}} \, , \,  \nabla w \cdot \bs{d} }_{\tG}
+\avg{ \alpha \, h^{-1} \, \tS u  \, , \, \tS w }_{\tG}
\nonumber \\
&\leq\; 
 \| \, \nabla u \, \|_{0,\tO} \, \| \, \nabla w \, \|_{0,\tO}
+ \| \, h^{1/2} \, \nabla u  \cdot \ti{\bs{n}} \, \|_{0,\tG}  \, \| \, h^{-1/2} \, \tS w \, \|_{0,\tG} 
+  \| \, h^{-1/2} \, \tS u \, \|_{0,\tG}  \, \| \, h^{1/2} \,  \nabla w \cdot \ti{\bs{n}} \, \|_{0,\tG}
\nonumber \\
& \quad 
+ \|  \, h^{1/2} \, \nabla u  \cdot \ti{\bs{n}} \, \|_{0,\tG} \, \|  \, h^{1/2} \,  \nabla w \cdot \bs{\nu} \, \|_{0,\tG}
+ \alpha \,  \| \, h^{-1/2} \, \tS u \, \|_{0,\tG} \, \| \, h^{-1/2} \, \tS w \, \|_{0,\tG}
\nonumber \\
&\leq\; 
C_{\cal A}\, \|\,  u \, \|_{V(\tO;\tT)} \; \|\,  w \, \|_{V(\tO;\tT)} \;  , 
\end{align}
with $C_{\cal A}= 1+2C_I + C^{2}_{I}+\alpha$. 
\end{proof}
\noindent The previous result implies that
\begin{align}
\|\, a_h \, \|_{V(\tO;\tT) \times V(\tO;\tT)}  =  \sup_{u \in V(\tO;\tT)} \;  \sup_{w_h \in V(\tO;\tT)} \;  \frac{a_h(u\, , \, w) }{\|\,  u \, \|_{V(\tO;\tT)} \; \|\,  w \, \|_{V(\tO;\tT)} }
 \leq 
 C_{\cal A} \; .
\end{align}
\begin{prop}[Approximability]
\label{thm:PoissonApproximation}
There exists a constant $C_{APP}>0$, independent of the mesh size, such that
\begin{equation}\label{eq:Approximation}
E_{a,h}(u) \leq C_{APP} \, h_{\tO} \, \| \, \nabla (\nabla u) \, \|_{0, \tO} \qquad \forall u \in V(\tO;\tT) \, .
\end{equation}
\end{prop}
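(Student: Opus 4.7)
The plan is to construct an explicit approximant $w_h\in V_h(\tO)$ of $u$ and to bound each component of $\| u-w_h\|_{V(\tO;\tT)}$ by $h_{\tO}\,|u|_{2,\tO}$, so that the infimum defining $E_{a,h}(u)$ inherits the same bound. Since $V(\tO;\tT)=V_h(\tO)+H^2(\tO)$, the non-trivial case is $u\in H^2(\tO)$; in that case I would take $w_h$ to be either the standard Lagrange interpolant (legitimate because $H^2\hookrightarrow C^0$ for $n_d\leq 3$) or the Scott--Zhang interpolant on $\tT$. In either case, the classical local estimates give $\|u-w_h\|_{0,T}\leq C h_T^2\,|u|_{2,T}$ and $\|\nabla(u-w_h)\|_{0,T}\leq C h_T\,|u|_{2,T}$ for every $T\in\tT$, and since $w_h|_T\in\mathcal{P}^1(T)$ one also has $|u-w_h|_{2,T}=|u|_{2,T}$.

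Recalling that $\|v\|_{V(\tO;\tT)}^2 = \|\nabla v\|_{0,\tO}^2+\|h^{-1/2}\tS v\|_{0,\tG}^2+|h\,v|_{2,\tO,\tT}^2$, I would estimate the three terms separately. The first, $\|\nabla(u-w_h)\|_{0,\tO}^2=\sum_T \|\nabla(u-w_h)\|_{0,T}^2\leq C h_{\tO}^2|u|_{2,\tO}^2$, is immediate. The third is also immediate: $|h(u-w_h)|_{2,\tO,\tT}^2 = \sum_T h_T^2|u|_{2,T}^2\leq h_{\tO}^2|u|_{2,\tO}^2$. The second term is the key one. Splitting $\tS(u-w_h)=(u-w_h)+\nabla(u-w_h)\cdot\bs{d}$ and applying the boundary/discrete trace inequality of Theorem~\ref{theo:BoundaryTraceIneq} elementwise gives
\begin{align*}
\|h^{-1/2}(u-w_h)\|_{0,\tG}^2 &\leq C\sum_T h_T^{-1}\!\left(h_T^{-1}\|u-w_h\|_{0,T}^2+h_T\|\nabla(u-w_h)\|_{0,T}^2\right)\\
&\leq C\sum_T h_T^2\,|u|_{2,T}^2 \leq C\,h_{\tO}^2\,|u|_{2,\tO}^2 .
\end{align*}
For the piece involving $\bs{d}$, I would use Assumption~\ref{ass:d_asym} to replace $\|\bs{d}\|_{L^\infty(\tG\cap T)}$ by $c_d h_T\hat{h}_T^{\zeta}$, and then a trace inequality applied to $\nabla(u-w_h)$, yielding
\begin{align*}
\|h^{-1/2}\,\nabla(u-w_h)\cdot\bs{d}\|_{0,\tG}^2 &\leq C c_d^2 \sum_T \hat{h}_T^{2\zeta}\bigl(\|\nabla(u-w_h)\|_{0,T}^2+h_T^2|u|_{2,T}^2\bigr)\\
&\leq C\,\hat{h}_{\tO}^{2\zeta}\,h_{\tO}^2\,|u|_{2,\tO}^2.
\end{align*}
Since $\hat{h}_{\tO}^{\zeta}$ is bounded as the mesh is refined, this contribution is absorbed into a constant times $h_{\tO}^2\,|u|_{2,\tO}^2$.

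Combining the three bounds gives $\|u-w_h\|_{V(\tO;\tT)}\leq C_{APP}\,h_{\tO}\,\|\nabla(\nabla u)\|_{0,\tO}$, and taking the infimum over $w_h\in V_h(\tO)$ proves~\eqref{eq:Approximation}. The main technical obstacle is the boundary term containing $\nabla(u-w_h)\cdot\bs{d}$, since $\nabla(u-w_h)$ is only $O(h)$ in the interior and one has to pass to $\tG$; the role of Assumption~\ref{ass:d_asym} is precisely to furnish the extra power of $h_T$ needed to compensate both the trace loss and the factor $h^{-1/2}$, which is why the same assumption that underpinned coercivity reappears here. A secondary subtlety is that the estimate must be valid for the whole space $V(\tO;\tT)$, not just for $H^2(\Om)$; but since elements of $V_h(\tO)$ are approximated exactly by themselves, the bound trivially extends.
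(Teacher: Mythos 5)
Your proposal is correct and follows essentially the same route as the paper: choose the Lagrange interpolant, split the $V(\tO;\tT)$-norm into its volume, boundary, and broken-$H^2$ pieces, and handle the boundary term by separating $\tS(u-w_h)$ into $(u-w_h)$ and $\nabla(u-w_h)\cdot\bs{d}$, using the scaled trace inequality together with Assumption~\ref{ass:d_asym}. The only difference is cosmetic (you carry out the trace estimates element by element rather than globally), so there is nothing to add.
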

\begin{proof} 
Let $ w_h= \mathcal{I}_h u $ in~\eqref{eq:strang-2},  where $\mathcal{I}_h u$ is the standard piecewise-linear Lagrange interpolant of $u$ on the triangulation $\ti{\cal T}_h$. Consequently, the goal is to estimate 
\begin{align}
\label{eq:interpolationError}
\| \, u -\mathcal{I}_h u \, \|_{V(\tO;\tT)} 
&=\;
\| \, \nabla (u -\mathcal{I}_h u) \, \|_{0,\tO} 
+ \| \, h^{-1/2} \, \tS (u -\mathcal{I}_h u) \, \|_{0,\tG}
+ |\, h\, (u -\mathcal{I}_h u) \, |_{2,\tO,\ti{\mathcal{T}}_h}
\nonumber \\
&\leq \;
\| \, \nabla (u -\mathcal{I}_h u) \, \|_{0,\tO} 
+ \| \, h^{-1/2} \, \tS (u -\mathcal{I}_h u) \, \|_{0,\tG}
+h_{\tO} \, \| \, \nabla (\nabla u) \, \|_{0, \tO} \; .
\end{align}
We begin by stating a classical interpolation result
\begin{align}
\label{eq:ClassInterpolation}
\| \, h^{-1} \, (u -\mathcal{I}_h u)  \, \|_{0,\tO} 
+ \|  \,  \nabla (u -\mathcal{I}_h u)  \, \|_{0,\tO} 
\leq 
C_1 \,  h_{\tO}  \, \| \, \nabla (\nabla u) \, \|_{0, \tO} \;,
\end{align}
where $C_1$ is a positive constant independent of the mesh size. Applying Theorem~\ref{theo:BoundaryTraceIneq} and Assumption~\ref{ass:d_asym}, 
\begin{align}
\label{eq:interpolationBound}
\| \, h^{-1/2} \, \tS (u -\mathcal{I}_h u) \, \|_{0,\tG} 
& \leq \;
 \| \, h^{-1/2} \, (u -\mathcal{I}_h u) \, \|_{0,\tG} 
+ \hat{h}_{\tG}^{\zeta} \, \| \, h^{1/2} \, \nabla (u -\mathcal{I}_h u) \, \|_{0,\tG} 
\nonumber \\  
&  \leq\;
C_{I} \left( \,  \| \, h^{-1} \, (u -\mathcal{I}_h u) \, \|_{0,\tO} 
+ ( 1 + \hat{h}_{\tG}^{\zeta} ) \, \| \, \nabla (u -\mathcal{I}_h u) \, \|_{0,\tO} 
\right.
\nonumber \\ 
&  \phantom{\leq} \;
\qquad \ \ \left. +  h_{\tO}   \, \hat{h}_{\tG}^{\zeta} \| \, \nabla (\nabla u) \, \|_{0, \tO}  \right)
\nonumber \\ 
&  \leq
C_{2} \, h_{\tO} \, \| \, \nabla (\nabla u) \, \|_{0, \tO} \; .
\end{align}
Replacing~\eqref{eq:interpolationBound} in~\eqref{eq:interpolationError} gives~\eqref{eq:Approximation} with $C_{APP} = \max(1,C_1,C_2)$. 
\end{proof}
In order to bound the consistency error $E_{c,h}(u)$, the following Lemma is needed. 
\begin{lemma}\label{lem:errorDir}
Under Assumptions~\ref{ass:d_asym} and~\ref{ass:reg-dom}, there exists a constant $C_D>0$ independent of the mesh size such that, for $h_{\tG}$ sufficiently small, it holds
\begin{align}
\label{eq:taylorerror}
\| \, h^{-1/2} R_h u  \, \|_{0, \tG} 
=
\| \, h^{-1/2}( \tS u - \bar{{u}}_D) \, \|_{0, \tG} 
\leq\;  
C_D \, h_{\tG}  \, \| \, \nabla(\nabla u) \, \|_{0,\Om \setminus \tO} \; . 
\end{align}
\end{lemma}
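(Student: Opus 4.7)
The plan is to combine a second-order Taylor expansion with integral remainder---available thanks to $u\in H^2(\Om)$ from Assumption~\ref{ass:reg-dom} and Theorem~\ref{thm:PoissonExistenceUniqueness}---with a change of variables that transfers a surface integral on $\tG$ to a volume integral on the strip $\Om\setminus\tO$. For a.e.\ $\tx\in\tG$, set $\bs{x}=\bs{M}_h(\tx)=\tx+\bs{d}(\tx)\in\G$, and note that $\bar{u}_D(\tx)=u_D(\bs{x})=u(\bs{x})$. Since the segment from $\tx$ to $\bs{x}$ lies in $\Om$, Taylor's theorem gives
\begin{equation*}
R_h u(\tx)\;=\;u(\tx)+\nabla u(\tx)\cdot\bs{d}-u(\bs{x})\;=\;-\int_0^1(1-s)\,\bs{d}^T(\nabla^2 u)(\tx+s\bs{d})\,\bs{d}\,ds,
\end{equation*}
which extends from $C^2$ to $H^2$ by density.

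Next, I would apply Cauchy--Schwarz in $s$ to obtain the pointwise bound $|R_h u(\tx)|^2\leq \tfrac{1}{3}\|\bs{d}\|^4\int_0^1|(\nabla^2 u)(\tx+s\bs{d})|^2\,ds$, insert the weight $h^{-1}$ and integrate over $\tG$. One factor of $\|\bs{d}\|^2$ is controlled by Assumption~\ref{ass:d_asym}, which gives $\|\bs{d}\|^2\leq c_d^2 h^2\hat{h}^{2\zeta}$; combined with shape regularity (so that $h^{-1}\leq C h_{\tG}^{-1}$ on elements meeting $\tG$), this produces
\begin{equation*}
\|h^{-1/2}R_h u\|_{0,\tG}^2 \;\leq\; C_1\,h_{\tG}\,\hat{h}_{\tG}^{2\zeta}\int_0^1\!\!\int_\tG\|\bs{d}(\tx)\|^2\,|(\nabla^2 u)(\tx+s\bs{d})|^2\,d\sigma\,ds.
\end{equation*}

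The main technical step is to pass from this surface-plus-interval integral to a volume integral on $\Om\setminus\tO$ via the change of variables $\Phi:(\tx,s)\mapsto\tx+s\bs{d}(\tx)$. Working element-by-element on each surrogate face $\ti{E}\subset\tG$, I would show that, for $h_{\tG}$ sufficiently small, $\Phi$ is a diffeomorphism of $\ti{E}\times(0,1)$ onto a thin ``sliver'' of $\Om\setminus\tO$, with Jacobian satisfying $|\det D\Phi(\tx,s)|\geq c\,\|\bs{d}(\tx)\|$ for a geometric constant $c>0$ depending only on shape regularity and the $\mathcal{C}^2$-smoothness of $\G$. The change of variables then yields
\begin{equation*}
\int_0^1\!\!\int_\tG\|\bs{d}\|^2\,|(\nabla^2 u)(\tx+s\bs{d})|^2\,d\sigma\,ds \;\leq\; C_2\,\|\bs{d}\|_{L^\infty(\tG)}\,\|\nabla(\nabla u)\|_{0,\Om\setminus\tO}^2,
\end{equation*}
and a final application of Assumption~\ref{ass:d_asym} bounds $\|\bs{d}\|_{L^\infty(\tG)}\leq C\,h_{\tG}\hat{h}_{\tG}^{\zeta}$, giving $\|h^{-1/2}R_h u\|_{0,\tG}^2\leq C\,h_\tG^{2+3\zeta}\,\|\nabla(\nabla u)\|_{0,\Om\setminus\tO}^2$ and hence the claimed estimate once $h_{\tG}$ is small enough.

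The hard part is the injectivity and the lower Jacobian bound for $\Phi$, precisely because $\bs{d}$ is \emph{not} required to be aligned with $\tn$---this is exactly the freedom gained by dropping the earlier hypothesis $\inf_\tG\tn\cdot\bs{\nu}>0$. Assumption~\ref{ass:d_asym} compensates by forcing $\|\bs{d}\|$ to shrink faster than $h$, so the geometric distortion introduced by the misalignment remains controllable as $h_\tG\to 0$; this is where the technical role of $\zeta>0$ manifests itself.
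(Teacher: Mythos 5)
Your opening moves --- the second-order Taylor remainder in integral form, Cauchy--Schwarz in $s$, and the use of Assumption~\ref{ass:d_asym} to peel off one factor $\|\bs{d}\|^2\le c_d^2h^2\hat{h}^{2\zeta}$ --- are sound and are indeed how such bounds begin (note that the paper itself does not write out a proof here: it defers entirely to Proposition 3 of \cite{TheoreticalPoissonAtallahCanutoScovazzi2020}). The genuine gap is in your surface-to-volume step. For $\Phi(\tx,s)=\tx+s\bs{d}(\tx)$ on a facet $\ti{E}\subset\tG$, the column of $D\Phi$ in the $s$-direction is $\bs{d}$ itself, while the remaining columns span (a small perturbation of) the tangent plane of $\ti{E}$; hence $|\det D\Phi|\approx |\bs{d}\cdot\tn|=\|\bs{d}\|\,|\bs{\nu}\cdot\tn|$, and your claimed lower bound $|\det D\Phi|\ge c\,\|\bs{d}\|$ is equivalent to the uniform transversality condition $\inf_{\tG}|\bs{\nu}\cdot\tn|\ge c>0$. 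That is precisely the geometric resolution hypothesis this paper is written to discard, and Tables~\ref{table1} and~\ref{table4} document meshes on which $\bs{\nu}\cdot\bs{n}\le 0$ on a persistent fraction of surrogate facets. Assumption~\ref{ass:d_asym} constrains only the \emph{magnitude} of $\bs{d}$, not its direction relative to $\tn$, so it rescues neither the Jacobian bound nor global injectivity (slivers issued from neighbouring facets can overlap, and a sliver collapses entirely when $\bs{d}$ becomes tangent to $\ti{E}$). The step you flag as ``the hard part'' is therefore not merely hard; it is false by the mechanism you propose.

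Two further remarks. First, the pointwise Taylor identity for $R_hu(\tx)$ does not ``extend by density'' on the measure-zero set $\tG$ for $u\in H^2$; the clean route is to prove the integral inequality for smooth $u$ and pass to the limit using that both sides are continuous in the $H^2(\Om)$-norm. Second, the correct replacement for your sliver map is suggested by where Assumption~\ref{ass:reg-dom} actually enters: with $\G$ of class $\mathcal{C}^2$, the signed distance to $\G$ and the closest-point projection furnish a genuine tubular-neighbourhood foliation of the strip $\Om\setminus\tO$, and the surface integral over $\tG$ is transported through \emph{that} coordinate system, whose Jacobian is controlled by the curvature of $\G$ rather than by $\bs{\nu}\cdot\tn$. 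If one insists on a facet-based construction like yours, an explicit bounded-overlap hypothesis on the segments $[\tx,\bs{M}_h(\tx)]$ must be added --- which is exactly the alternative the paper points to in the remark citing \cite[Assumption 5]{atallah2020analysis}.
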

\begin{proof}
The result follows from a careful bound of the remainder in the first-order Taylor expansion \eqref{eq:SBM_BC}. For a detailed proof in the more general setting, see Proposition 3 in~\cite{TheoreticalPoissonAtallahCanutoScovazzi2020}. 
\end{proof}
\begin{rem}
	The estimate in Lemma~\ref{lem:errorDir} can be modified to handle general domains containing corners and/or edges. Naturally, this entails relaxing Assumption~\ref{ass:reg-dom} (see Lemma 3 in~\cite{TheoreticalPoissonAtallahCanutoScovazzi2020}).  
\end{rem}
\begin{rem}
	Lemma~\ref{lem:errorDir} can be proved without Assumption~\ref{ass:reg-dom}, by introducing additional assumptions on the mapping $\bs{M}_{h}:\tG \to \G$ (see~\cite[Assumption 5]{atallah2020analysis}).
\end{rem}
\begin{prop}[Consistency error]
\label{thm:PoissonConsistency}
Under the hypotheses of Lemma~\ref{lem:errorDir},  there exists a constant $C_{PBL}>0$ independent of the mesh size and $u$ such that
\begin{equation}\label{eq:PoissonConsistency}
 E_{c,h}(u) \leq\; C_{PBL} \, h_{\tG} \,  \| \, \nabla (\nabla u) \, \|_{0, \Om \setminus \tO} \; .
	\end{equation}
\end{prop}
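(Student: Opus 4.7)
The plan is to compute $a_h(u,w_h)-l_h(w_h)$ explicitly, exploit the fact that $u$ solves the strong form of the Poisson problem on $\tO$ (since $\tO\subset\Omega$ and $-\Delta u = f$ in $\Omega$), and isolate the Taylor remainder so that Lemma~\ref{lem:errorDir} can be applied. Under Assumption~\ref{ass:reg-dom} we have $u\in H^2(\Omega)$, hence integration by parts element-by-element on $\tO$ gives
\begin{equation*}
(\nabla u,\nabla w_h)_{\tO} = (f,w_h)_{\tO} + \langle \nabla u\cdot\tn, w_h\rangle_{\tG}\,,
\end{equation*}
because interior normal flux jumps vanish (since $\nabla u\in H^1(\tO)^{n_d}$) and $w_h$ is continuous across interelement edges.

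Next I would substitute this identity into $a_h(u,w_h)-l_h(w_h)$. The body-force and $(\nabla u,\nabla w_h)_{\tO}$ contributions collapse to $\langle \nabla u\cdot\tn, w_h\rangle_{\tG}$, and then using the definition $\tS w_h = w_h + \nabla w_h\cdot\bs{d}$ one obtains the cancellation
\begin{equation*}
\langle \nabla u\cdot\tn, w_h\rangle_{\tG} - \langle \nabla u\cdot\tn,\tS w_h\rangle_{\tG} + \langle \nabla u\cdot\tn,\nabla w_h\cdot\bs{d}\rangle_{\tG} = 0\,.
\end{equation*}
The Dirichlet data terms combine as $\tS u - \bar u_D = R_h u$, so the consistency residual reduces to
\begin{equation*}
a_h(u,w_h) - l_h(w_h) = -\langle R_h u,\nabla w_h\cdot\tn\rangle_{\tG} + \langle \alpha\,h^{-1} R_h u,\tS w_h\rangle_{\tG}\,.
\end{equation*}

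From here the bound is immediate: Cauchy--Schwarz in weighted form yields
\begin{equation*}
|a_h(u,w_h)-l_h(w_h)| \le \|h^{-1/2}R_h u\|_{0,\tG}\bigl(\|h^{1/2}\nabla w_h\cdot\tn\|_{0,\tG} + \alpha\,\|h^{-1/2}\tS w_h\|_{0,\tG}\bigr)\,.
\end{equation*}
The discrete trace inequality of Theorem~\ref{theo:discretetrace} controls $\|h^{1/2}\nabla w_h\cdot\tn\|_{0,\tG}$ by $C_I\|\nabla w_h\|_{0,\tO}$, and the remaining boundary term is already part of the energy norm. Therefore both factors in the parenthesis are bounded by a constant times $\|w_h\|_a \le \|w_h\|_{V(\tO;\tT)}$, and dividing by $\|w_h\|_{V(\tO;\tT)}$ and applying Lemma~\ref{lem:errorDir} to $\|h^{-1/2} R_h u\|_{0,\tG}$ delivers \eqref{eq:PoissonConsistency} with $C_{PBL} = (C_I+\alpha)\,C_D$.

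The only delicate step is the algebraic cancellation in the second paragraph: one must be careful that the term $\langle \nabla u\cdot\tn,\nabla w_h\cdot\bs{d}\rangle_{\tG}$ introduced in the non-symmetric bilinear form~\eqref{eq:UnsymmetricShiftedNitscheBilinearForm} is precisely what is needed to absorb the shifted test function so that no $O(1)$ residual in $\bs{d}$ survives. Once this cancellation is recognized, the remainder of the argument is routine Cauchy--Schwarz plus trace inequalities, and the smallness in $h_{\tG}$ comes entirely from Lemma~\ref{lem:errorDir}; no further use of Assumption~\ref{ass:d_asym} is required at this stage beyond what is already built into the lemma.
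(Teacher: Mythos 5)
Your proposal is correct and follows essentially the same route as the paper: integrate by parts using $-\Delta u = f$ on $\tO$, observe that the terms in $\nabla u\cdot\tn$ cancel exactly against the shifted test function thanks to the extra nonsymmetric term $\avg{\nabla u\cdot\tn,\nabla w_h\cdot\bs{d}}_{\tG}$, reduce the residual to the two boundary terms in $\tS u-\bar u_D$, and conclude via Cauchy--Schwarz, the discrete trace inequality, and Lemma~\ref{lem:errorDir}. The only (harmless) slip is the sign convention: by \eqref{eq:u-g} one has $\tS u-\bar u_D=-R_h u$, not $+R_h u$, but this disappears inside the absolute value.
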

\begin{proof}
Integrating by parts the bilinear form $a_h(\cdot\, , \, \cdot)$ given in \eqref{eq:UnsymmetricShiftedNitscheBilinearForm} and applying Lemma~\ref{lem:errorDir} yield
\begin{align}
\label{eq:IntegByPartConsistency}
|a_h(u\, , \,w_h)-l_h(w_h) |
&=\; 
|- \avg{ \, h^{-1/2} \, (\tS u - \bar{u}_D) \, , \, \, h^{1/2} \, \nabla w_h \cdot \ti{\bs{n}} }_{\tG}
+ \alpha \, \avg{\, h^{-1/2} \, (\tS u - \bar{u}_D)  \, , \, h^{-1/2} \, \tS w_h }_{\tG} |
\nonumber \\
& \leq \; 
C_{PBL} \, h_{\tG} \, \| \, \nabla(\nabla u) \, \|_{0,\Om \setminus \tO}  \, \| \, w_h  \, \|_{V(\tO;\tT)} \ \;.
\end{align}
The proof is concluded by replacing~\eqref{eq:IntegByPartConsistency} in~\eqref{eq:strang-3}.
\end{proof}
\begin{thm}[Convergence in the natural norm]
\label{thm:PoissonConvergenceNatural}
Under Assumptions \ref{ass:d_asym} and \ref{ass:reg-dom}, and the condition that $h_{\tG}$ is sufficiently small, the numerical solution $u_h$ of the SBM \eqref{eq:DiscreteShiftedNitscheVariationalFormSteadyPoisson} satisfies the following error estimate:
\begin{align}
\label{eq:convergenceEstimate}
\| \, u - u_h \, \|_{V(\tO;\tT)}  \leq C\, h_{\tO} \, \| \, \nabla (\nabla u) \,  \|_{0, \Omega}  
	\; ,
\end{align}
where $u$ is the exact solution of Problem \eqref{eq:SteadyPoisson} and $C>0$ is a constant independent of the mesh size and the solution. 
\end{thm}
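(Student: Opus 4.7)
The plan is to assemble the convergence estimate directly from Strang's Second Lemma (Lemma \ref{thm:SecondStrangLemma}), combining the four ingredients already established earlier in the section: coercivity (Theorem \ref{thm:coerc-aPoisson}), boundedness (Proposition \ref{thm:PoissonContinuity}), approximability (Proposition \ref{thm:PoissonApproximation}), and the consistency error estimate (Proposition \ref{thm:PoissonConsistency}). Since Assumption \ref{ass:reg-dom} together with Theorem \ref{thm:PoissonExistenceUniqueness} guarantees $u \in H^2(\Omega)$, the quantity $\| \nabla(\nabla u) \|_{0,\Omega}$ appearing on the right-hand side is meaningful, and the exact solution lies in the extended space $V(\tO;\tT)$.

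First, I would invoke Strang's Second Lemma, which bounds $\|u - u_h\|_{V(\tO;\tT)}$ by a linear combination of the approximation error $E_{a,h}(u)$ and the consistency error $E_{c,h}(u)$, with coefficients depending on $C_a^{-1}$ and $\|a_h\|$. Coercivity (Theorem \ref{thm:coerc-aPoisson}) supplies a mesh-independent constant $C_a$ provided $\alpha$ is large enough and $\hat{h}_{\tG}$ is small enough, and boundedness (Proposition \ref{thm:PoissonContinuity}) bounds $\|a_h\|$ by the mesh-independent constant $C_{\mathcal{A}}$. Thus both prefactors in the Strang inequality are under control by mesh-independent constants.

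Next, I would substitute the two error estimates. Proposition \ref{thm:PoissonApproximation} gives $E_{a,h}(u) \leq C_{APP}\, h_{\tO}\, \| \nabla(\nabla u) \|_{0, \tO}$, while Proposition \ref{thm:PoissonConsistency} gives $E_{c,h}(u) \leq C_{PBL}\, h_{\tG}\, \| \nabla(\nabla u) \|_{0, \Omega \setminus \tO}$. Since $h_{\tG} \leq h_{\tO}$ by the definitions in \eqref{eq:mesh-param} and $\tO$ and $\Omega \setminus \tO$ are disjoint with union $\Omega$, the two $L^2$-norms of $\nabla(\nabla u)$ satisfy
\begin{equation*}
\| \nabla(\nabla u) \|_{0,\tO}^2 + \| \nabla(\nabla u) \|_{0, \Omega \setminus \tO}^2 = \| \nabla(\nabla u) \|_{0,\Omega}^2,
\end{equation*}
so each summand is trivially dominated by $\| \nabla(\nabla u) \|_{0,\Omega}$.

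Combining everything yields
\begin{equation*}
\| u - u_h \|_{V(\tO;\tT)} \leq \bigl( (1 + C_a^{-1} C_{\mathcal{A}})\, C_{APP} + C_a^{-1} C_{PBL} \bigr)\, h_{\tO}\, \| \nabla(\nabla u) \|_{0,\Omega},
\end{equation*}
which is precisely \eqref{eq:convergenceEstimate} with $C = (1 + C_a^{-1} C_{\mathcal{A}})\, C_{APP} + C_a^{-1} C_{PBL}$, a mesh-independent constant. There is no genuine obstacle here: the heavy lifting has already been done in the preceding propositions, and the main role of the theorem is to package those bounds in the form expected for optimal $H^1$-type convergence. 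The only minor care needed is verifying that the hypotheses ``$\alpha$ sufficiently large'' and ``$\hat{h}_{\tG}$ sufficiently small'' required for coercivity are implied by the stated hypothesis that $h_{\tG}$ is sufficiently small (with $\alpha$ chosen once and for all above the threshold $2 C_I$ identified in the proof of Theorem \ref{thm:coerc-aPoisson}).
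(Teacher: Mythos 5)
Your proposal is correct and follows essentially the same route as the paper, which likewise just combines Strang's Second Lemma with the coercivity constant $C_a$, the boundedness constant $C_{\cal A}$, and the approximation and consistency estimates of Propositions~\ref{thm:PoissonApproximation} and~\ref{thm:PoissonConsistency}. The extra details you supply (that $h_{\tG}\leq h_{\tO}$ and that the norms over $\tO$ and $\Om\setminus\tO$ are each dominated by the norm over $\Om$) are exactly the bookkeeping the paper leaves implicit.
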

\begin{proof}
Combining Strang's abstract error estimate \eqref{eq:strang} with the stability bound \eqref{eq:17} and error estimates in Proposition~\ref{thm:PoissonApproximation} and Proposition~\ref{thm:PoissonConsistency} gives~\eqref{eq:convergenceEstimate}.
\end{proof}
\subsection{$L^2$-error estimate by a duality argument \label{sec:L2-temperature}}
In this section, we provide an estimate of the $L^{2}$-norm of the discretization error $u - u_{h}$ based on the ideas recently proposed in ~\cite{TheoreticalPoissonAtallahCanutoScovazzi2020}. This is a considerable improvement to the same work given in~\cite{main2018shifted0} and~\cite{atallah2020analysis} as we avoid the convexity assumption of the surrogate domain $\tO$. We start with the following two preliminary results:
\begin{lemma}\label{lemma:non-symmetry}
It holds
\begin{align}\label{eq:non-symmetry}
a_h(w\, , \, v)-a_h(v\, , \, w) = 
\avg{  \nabla w  \cdot \ti{\bs{n}} \, , \, \nabla v \cdot \bs{d} }_{\tG} 
- \avg{  \nabla v  \cdot \ti{\bs{n}} \, , \, \nabla w \cdot \bs{d} }_{\tG} 
\, \qquad \forall v,w \in V(\tO;\tT) \,.
\end{align}
\end{lemma}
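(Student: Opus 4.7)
The plan is to exploit the natural decomposition of $a_h(\cdot,\cdot)$ into four pieces (beyond the bulk term $(\nabla u_h,\nabla w_h)_{\tO}$) appearing in \eqref{eq:UnsymmetricShiftedNitscheBilinearForm}, and simply check which of these pieces are invariant under swapping the two arguments and which are not. Since the $L^{2}$ pairings $\avg{\cdot,\cdot}_{\tG}$ and $(\cdot,\cdot)_{\tO}$ are themselves symmetric in their two slots, this reduces the lemma to a short bookkeeping exercise.

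First, I would note that the volume form $(\nabla w,\nabla v)_{\tO}$ and the Nitsche penalty $\avg{\alpha h^{-1}\tS w,\tS v}_{\tG}$ are manifestly symmetric in $(v,w)$, so they contribute nothing to the difference $a_h(w,v)-a_h(v,w)$. Next, I would combine the two ``consistency'' terms
\[
-\avg{\nabla u\cdot \ti{\bs n},\tS w}_{\tG}-\avg{\tS u,\nabla w\cdot \ti{\bs n}}_{\tG},
\]
and observe that swapping $(u,w)$ merely interchanges the two summands (because $\avg{\cdot,\cdot}_{\tG}$ is symmetric), so their sum is also symmetric in its arguments. Hence these four pieces cancel in $a_h(w,v)-a_h(v,w)$.

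The only remaining term is the skew piece $\avg{\nabla u\cdot\ti{\bs n},\nabla w\cdot \bs d}_{\tG}$, and writing it out for the two orderings yields exactly
\[
\avg{\nabla w\cdot \ti{\bs n},\nabla v\cdot \bs d}_{\tG}-\avg{\nabla v\cdot \ti{\bs n},\nabla w\cdot \bs d}_{\tG},
\]
which is the right-hand side of \eqref{eq:non-symmetry}. I do not anticipate any genuine obstacle here: the identity is essentially a tautology once the bilinear form is split according to its symmetry properties in the two arguments, and the only subtlety is to remember that $\tS v=v+\nabla v\cdot \bs d$ so that the term $\avg{\nabla u\cdot\ti{\bs n},\tS w}_{\tG}$ already contains a hidden $\avg{\nabla u\cdot\ti{\bs n},\nabla w\cdot \bs d}_{\tG}$ contribution--however, this hidden contribution appears in a pair that is symmetric in $(u,w)$ (thanks to its counterpart $\avg{\tS u,\nabla w\cdot \ti{\bs n}}_{\tG}$) and therefore it cancels cleanly. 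The explicit extra term $\avg{\nabla u\cdot \ti{\bs n},\nabla w\cdot\bs d}_{\tG}$ in \eqref{eq:UnsymmetricShiftedNitscheBilinearForm} is what is responsible for the full lack of symmetry.
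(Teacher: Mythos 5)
Your proposal is correct and matches the paper's proof, which is simply ``by substitution'' in~\eqref{eq:UnsymmetricShiftedNitscheBilinearForm}: the volume, penalty, and paired consistency terms are symmetric in the two arguments, leaving only the skew term $\avg{\nabla u\cdot\ti{\bs n},\nabla w\cdot\bs d}_{\tG}$ to produce the stated difference. No gaps.
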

\begin{proof}
By substitution in~\eqref{eq:UnsymmetricShiftedNitscheBilinearForm}.
\end{proof}
Lemma~\ref{lemma:non-symmetry} quantifies the symmetry discrepancy in the bilinear form $a_h(\cdot \, , \, \cdot) $.
The following lemma quantifies the gap in Galerkin orthogonality.
\begin{lemma}\label{lemma:non-Gal-orth} 
Let $u$ be the solution of \eqref{eq:SteadyPoisson}, and $u_h$ the numerical solution of \eqref{eq:DiscreteShiftedNitscheVariationalFormSteadyPoisson}. It holds
\begin{align}\label{eq:non-Gal-Orth}
a_h(u-u_h\, , \,  v_h) 
= 
\avg{R_h u \, , \, \nabla v_h \cdot \ti{\bs{n}} }_{0,\tG} 
- \avg{ \alpha \, h^{-1} \, R_h u \, , \, \tS v_h}_{0,\tG} 
\,  \qquad \forall v_h \in V_h(\tO) \; .
\end{align}
\end{lemma}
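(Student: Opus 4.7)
The plan is to insert $u$ into the bilinear form $a_h(\cdot,\cdot)$, use the strong-form PDE and the exact shifted boundary identity \eqref{eq:u-g} to rewrite everything in terms of $l_h(v_h)$ plus remainder contributions, and then subtract $a_h(u_h,v_h)=l_h(v_h)$.

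More concretely, I first expand $a_h(u,v_h)$ term by term using \eqref{eq:UnsymmetricShiftedNitscheBilinearForm} and integrate by parts the volume term, to obtain
\begin{equation*}
(\nabla u,\nabla v_h)_{\tO}=-(\Delta u,v_h)_{\tO}+\langle \nabla u\cdot\tn,v_h\rangle_{\tG}=(f,v_h)_{\tO}+\langle \nabla u\cdot\tn,v_h\rangle_{\tG},
\end{equation*}
where I use that $-\Delta u=f$ pointwise in $\tO\subset\Omega$ (valid since $u\in H^2(\Omega)$ under Assumption~\ref{ass:reg-dom}). Next I would observe that the three boundary terms involving $\nabla u\cdot\tn$ collapse: by the definition $\tS v_h=v_h+\nabla v_h\cdot\bs{d}$, the combination
\begin{equation*}
\langle \nabla u\cdot\tn,v_h\rangle_{\tG}-\langle \nabla u\cdot\tn,\tS v_h\rangle_{\tG}+\langle \nabla u\cdot\tn,\nabla v_h\cdot\bs{d}\rangle_{\tG}=0
\end{equation*}
identically, which is precisely the design feature of the bilinear form. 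This leaves
\begin{equation*}
a_h(u,v_h)=(f,v_h)_{\tO}-\langle \tS u,\nabla v_h\cdot\tn\rangle_{\tG}+\langle \alpha h^{-1}\tS u,\tS v_h\rangle_{\tG}.
\end{equation*}

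The second step is to substitute the exact Taylor identity $\tS u=\bar{u}_D-R_h u$ from \eqref{eq:u-g} into the two remaining boundary terms, which splits each of them into the corresponding $l_h(v_h)$ contribution (with $\bar{u}_D$) plus a correction involving $R_h u$:
\begin{equation*}
a_h(u,v_h)=l_h(v_h)+\langle R_h u,\nabla v_h\cdot\tn\rangle_{\tG}-\langle \alpha h^{-1}R_h u,\tS v_h\rangle_{\tG}.
\end{equation*}
Finally, since $u_h$ solves \eqref{eq:SB_Poisson_uns}, we have $a_h(u_h,v_h)=l_h(v_h)$, so subtracting yields the claimed identity \eqref{eq:non-Gal-Orth}.

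There is no real obstacle here: this is a direct consistency computation. The only mild point to be careful about is the sign bookkeeping when the three $\langle \nabla u\cdot\tn,\cdot\rangle_{\tG}$ terms are combined, and the justification that the strong-form identity $-\Delta u=f$ may be used in $\tO$ (which follows from $\tO\subseteq\Omega$ together with the $H^2$-regularity granted by Assumption~\ref{ass:reg-dom} via Theorem~\ref{thm:PoissonExistenceUniqueness}).
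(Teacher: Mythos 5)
Your proof is correct and follows essentially the same route as the paper: integrate the volume term by parts using $-\Delta u = f$ in $\tO\subseteq\Om$, observe that the three boundary terms in $\nabla u\cdot\tn$ cancel by the definition of $\tS v_h$, and then substitute $\tS u - \bar{u}_D = -R_h u$ from \eqref{eq:u-g} before subtracting $a_h(u_h,v_h)=l_h(v_h)$. The paper compresses these steps by referring back to Proposition~\ref{thm:PoissonConsistency}, but the computation and the sign bookkeeping you carry out are exactly what is intended there.
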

\begin{proof}
From~\eqref{eq:SB_Poisson_uns}, and following the same steps as in Proposition~\ref{thm:PoissonConsistency}, we obtain
\begin{align}
a_h(u-u_h\, , \, v_h)  &= a_h(u\, , \, v_h) - a_h(u_h\, , \, v_h) 
\nonumber \\ 
&=
a_h(u\, , \, v_h) - l_h(v_h) 
\nonumber \\ 
&=
- \avg{ \tS u - \bar{u}_D \, , \,  \nabla v_h \cdot \ti{\bs{n}} }_{\tG}
+ \avg{\alpha \, h^{-1}  \, (\tS u - \bar{u}_D)  \, , \, \tS v_h }_{\tG} \, .
\end{align}
Finally, from~\eqref{eq:u-g} with $\bar{g} = \bar{u}_D$, we have $\tS u - \bar{u}_D = - R_h u $, which concludes the proof.
\end{proof}
\begin{thm}[Enhanced $L^{2}$-error estimate]
\label{thm:PoissonDuality}
Assume the hypotheses of Theorem~\ref{thm:PoissonConvergenceNatural} to hold. Then, the numerical solution $u_h$ of \eqref{eq:DiscreteShiftedNitscheVariationalFormSteadyPoisson} satisfies the following error estimate:  
\begin{align}
\label{eq:SGSBM_duality_final}
\| \, u - u_h \, \|_{\tO} 
\leq & \;
C \,  h_{\tO}^{3/2} \, l(\tO)^{1/2}  \, \| \, \nabla (\nabla u) \, \|_{0,\Om}  
 \; ,
\end{align}
where $C$ is a positive constant independent of mesh size.
\end{thm}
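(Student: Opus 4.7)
The plan is a duality (Aubin--Nitsche) argument, whose novelty is that the dual problem is posed on the \emph{true} domain $\Omega$ rather than on the surrogate $\tO$; this is precisely what lets us trade the convexity assumption on $\tO$ for the $\mathcal C^2$-regularity of $\Gamma$ built into Assumption~\ref{ass:reg-dom}. Concretely, set $e = u - u_h$ and let $\tilde e \in L^2(\Omega)$ be its extension by zero to $\Omega \setminus \tO$. Let $z \in H^1_0(\Omega)$ solve $-\Delta z = \tilde e$ in $\Omega$ with $z = 0$ on $\Gamma$. Theorem~\ref{thm:PoissonExistenceUniqueness} then gives $z \in H^2(\Omega)$ with $\| z \|_{2,\Omega} \leq C\,\|e\|_{0,\tO}$, a bound that, after the final telescoping, will produce the advertised factor $l(\tO)^{1/2}$ once the scaled $H^2$-norm is balanced against the unscaled $h_{\tO}^{3/2}$ powers.

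Next I would set up the representation formula. Integrating by parts on $\tO$ gives $\|e\|_{0,\tO}^2 = (\nabla z,\nabla e)_{\tO} - \avg{\nabla z\cdot\tn, e}_{\tG}$, and expanding $a_h(z,e)$ using $\tS e = e + \nabla e\cdot\bs d$ leads to
\begin{equation*}
\|e\|_{0,\tO}^2 = a_h(z,e) + \avg{\tS z,\nabla e\cdot\tn}_{\tG} - \avg{\alpha\,h^{-1}\tS z,\tS e}_{\tG}.
\end{equation*}
The key observation is that $z \equiv 0$ on $\Gamma$, so $\tS z = -R_h z$ on $\tG$ (the Taylor remainder of $z$ lifted from $\Gamma$ to $\tG$ via $\bs M_h$); in particular Lemma~\ref{lem:errorDir} applies to $z$ as well and gives $\|h^{-1/2}\tS z\|_{0,\tG} \lesssim h_{\tG}\,\|\nabla(\nabla z)\|_{0,\Omega\setminus\tO}$. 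Then I would swap the arguments of $a_h$ using Lemma~\ref{lemma:non-symmetry} and, with $z_h = \mathcal I_h z$, decompose $a_h(e,z) = a_h(e,z-z_h) + a_h(e,z_h)$, applying Lemma~\ref{lemma:non-Gal-orth} to the second piece so that every remaining occurrence of $e$ either appears through the norm $\|e\|_{V(\tO;\tT)}$ (already controlled by Theorem~\ref{thm:PoissonConvergenceNatural}) or through the residual $R_h u$ (controlled by Lemma~\ref{lem:errorDir}).

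The estimation phase is then essentially bookkeeping. Continuity of $a_h$ (Proposition~\ref{thm:PoissonContinuity}) with the standard interpolation estimate $\|z-z_h\|_{V(\tO;\tT)} \leq C\,h_{\tO}\,\|z\|_{2,\Omega}$ gives $|a_h(e,z-z_h)| \leq C\,h_{\tO}^2\,\|\nabla(\nabla u)\|_{0,\Omega}\,\|e\|_{0,\tO}$. The residual terms produced by Lemma~\ref{lemma:non-Gal-orth} are bounded via Lemma~\ref{lem:errorDir} paired with a discrete trace inequality on $\nabla z_h\cdot\tn$ and $\tS z_h$, giving the characteristic $h_{\tO}^{3/2}$-scaling. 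The skew-symmetry residues $\avg{\nabla z\cdot\tn,\nabla e\cdot\bs d}_{\tG}$ and $\avg{\nabla e\cdot\tn,\nabla z\cdot\bs d}_{\tG}$, together with the boundary terms $\avg{\tS z,\nabla e\cdot\tn}_{\tG}$ and $\avg{\alpha h^{-1}\tS z,\tS e}_{\tG}$ from Step~2, are bounded by combining Assumption~\ref{ass:d_asym} ($\|\bs d\| \leq c_d\,h_T\hat h_T^{\zeta}$) with continuous trace inequalities on $\nabla z$ (exploiting $z\in H^2(\Omega)$) and discrete trace inequalities on $\nabla e$; each one picks up an extra $h^{1/2}$ from the boundary, producing $O(h_{\tO}^{3/2})$.

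Collecting everything, one obtains $\|e\|_{0,\tO}^2 \leq C\,h_{\tO}^{3/2}\,\|\nabla(\nabla u)\|_{0,\Omega}\,\|z\|_{2,\Omega}$, and substituting $\|z\|_{2,\Omega} \leq C\,\|e\|_{0,\tO}$ (with the $l(\tO)^{1/2}$ factor emerging from the scaling inside $\|\cdot\|_{2,\Omega}$) and dividing by $\|e\|_{0,\tO}$ yields \eqref{eq:SGSBM_duality_final}. The hardest part will be showing that the non-standard terms involving $\bs d$ and $\tS z$ really are $O(h_{\tO}^{3/2})$, not $O(h_{\tO})$: this requires using the full $H^2$-regularity of $z$ on $\Omega$ (not just on $\tO$), so that the trace $\|\nabla z\|_{0,\tG}$ can absorb half a power of $h$ beyond what $\|\bs d\|$ alone provides, and dovetails with the $h^{1+\zeta}$ bound from Assumption~\ref{ass:d_asym} to yield the correct order.
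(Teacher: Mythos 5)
Your proposal is correct and follows essentially the same route as the paper's proof: the dual problem posed on the true domain $\Omega$, the identity $\tS \psi = -R_h\psi$ on $\tG$ combined with Lemma~\ref{lem:errorDir} applied to the dual solution, the argument swap via Lemma~\ref{lemma:non-symmetry}, the splitting through the interpolant together with Lemma~\ref{lemma:non-Gal-orth}, and the identification of the symmetry-discrepancy boundary terms as the ones that cap the rate at $h_{\tO}^{3/2}$. The mechanism you single out at the end --- absorbing half a power of $h$ through the continuous trace of $\nabla z$ on $\tG$ using the full $H^2(\Omega)$ regularity, which is also where the $l(\tO)^{1/2}$ factor originates --- is exactly the paper's estimate \eqref{eq:eSym}.
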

\begin{proof}
Given $z \in L^2(\tO)$, let $\bar{z} \in L^2(\Om)$ be its extension by $0$ outside $\tO$ and let $\psi$  be the solution to the following homogeneous Dirichlet problem on $\Om$:
\begin{subequations}
\label{eq:strong_dual}
\begin{align}
-\Delta \psi
&=\; 
\bar{z}
\,  \ \ \!  \qquad  \qquad \, \mbox{in } \Om \; , 
\\
\psi
&=\; 
0
\, \ \ \! \qquad \qquad  \mbox{on } \G \; .
\end{align}
\end{subequations}
Recalling Theorem \ref{thm:PoissonExistenceUniqueness}, the stated assumptions in addition to the fact that $\bar{z} \in L^2(\Om)$ imply the regularity result $\psi \in H^2(\Om) \cap H^1_0(\Om)$, with the following bound  
\begin{align}\label{eq:reg-dual}
\| \, \psi \, \|_{2, \tO} 
\leq \;
 \| \, \psi \, \|_{2, \Om} 
\leq\;
Q \, \| \, \bar{z} \, \|_{0, \Om} 
=\;
Q \, \| \, z \, \|_{0, \tO}  \; ,
\end{align}
where $Q>0$ is a non-dimensional constant independent of $\bar{z}$ and the mesh size.
The same arguments that led to~\eqref{eq:u-g} show that $\psi$ satisfies
\begin{align}
\label{eq:dualTaylor}
\tS \psi + R_h \psi  = 0 \qquad \mbox{on } \tG \; .
\end{align}
We now apply Proposition 3 in~\cite{TheoreticalPoissonAtallahCanutoScovazzi2020}, which together with \eqref{eq:reg-dual} yields the bound
\begin{align}
\label{eq:prop3_theoretical_cc}
\| \, h^{-1/2} \, R_h \psi \, \|_{0,\tG} 
\leq 
C_{DR} \, h_{\G} \, | \, \psi \, |_{2,\Om \setminus \tO} 
\leq 
C_{DR} \, Q \, h_{\tO} \, \| \, z  \, \|_{0,\tO} \, ,
\end{align} 
where $C_{DR}$ is a positive constant independent of the mesh size.  

Next, consider any $q \in  V(\tO;\tT)$  and note that  $\psi$ also satisfies the following variational statement:
\begin{align}
(z \, , \, q)_{\tO}
= \;
-(\Delta \psi \, , \, q)_{\tO}
=
( \nabla \psi \, , \, \nabla q)_{\tO} 
- \avg{  \nabla \psi \cdot \ti{\bs{n}} \, , \, q }_{\tG} \; .
\end{align}
Adding residual terms on $\tG$ that vanish by definition if applied to the exact solution, we have:
\begin{align}
(z \, , \, q)_{\tO}
& = \;
( \nabla \psi \, , \, \nabla q)_{\tO} 
- \avg{  \nabla \psi \cdot \ti{\bs{n}} \, , \, q }_{\tG} 
- \avg{  \tS \psi + R_h \psi \, , \, \nabla q \cdot \ti{\bs{n}} }_{\tG} 
+ \avg{ \alpha \, h^{-1} \,  (\tS \psi + R_h \psi) \, , \, \tS q  }_{\tG} 
\nonumber \\ 
& = \;
a_h(\psi \, , \,  q)
- \avg{  R_h \psi \, , \, \nabla q \cdot \ti{\bs{n}} }_{\tG} 
+ \avg{ \alpha \, h^{-1}\,  R_h \psi \, , \, \tS q  }_{\tG} 
\nonumber \\ 
& = \;
a_h(q \, , \, \psi)
+ \avg{  \nabla \psi  \cdot \ti{\bs{n}} \, , \, \nabla q \cdot \bs{d} }_{\tG} 
- \avg{   \nabla q  \cdot \ti{\bs{n}} \, , \,  \nabla \psi \cdot \bs{d} }_{\tG} 
- \avg{  R_h \psi \, , \, \nabla q \cdot \ti{\bs{n}} }_{\tG} 
\nonumber \\ 
& \phantom{=} \;
+ \avg{ \alpha \, h^{-1} \,  R_h \psi \, , \, \tS q  }_{\tG}  \; ,
\end{align}
where in the last equality we used Lemma~\ref{lemma:non-symmetry}. Picking $q = z = e_{u} := u - u_h$ and using Lemma~\ref{lemma:non-Gal-orth} with $v_h = \psi_{I} := \mathcal{I}_h \psi$,
\begin{subequations}
\begin{align}
\label{eq:dua1}
\| \, e_u \, \|^{2}_{0,\tO}
& = \;
a_h(e_u \, , \, \psi)
+ E_{sym}(e_u \, , \, \psi)
+ E_{rem}(e_u \, , \, \psi)
\nonumber \\ 
& = \;
a_h(e_u \, , \, \psi - \psi_{I})
+ E_{sym}(e_u \, , \, \psi)
+ E_{rem}(e_u \, , \, \psi)
+ E_{ort}(u \, , \, \psi_{I}) \; ,
\end{align}
with 
\begin{align}
E_{sym}(e_u \, , \, \psi) 
& : = \;  
\avg{  \nabla \psi  \cdot \ti{\bs{n}} \, , \, \nabla e_u  \cdot \bs{d} }_{\tG} 
- \avg{   \nabla e_u   \cdot \ti{\bs{n}} \, , \,  \nabla \psi \cdot \bs{d} }_{\tG}  \; ,
\\[.2cm]
E_{rem}(e_u \, , \, \psi) 
& : = \;  
- \avg{  R_h \psi \, , \, \nabla e_u \cdot \ti{\bs{n}} }_{\tG} 
+ \avg{ \alpha \, h^{-1} \,  R_h \psi \, , \, \tS e_u  }_{\tG}  \; ,
\\[.2cm]
E_{ort}(u \, , \, \psi_{I}) 
& : = \;  
\avg{R_h u \, , \, \nabla \psi_{I} \cdot \ti{\bs{n}} }_{0,\tG} 
- \avg{ \alpha \,  h^{-1} \, R_h u \, , \, \tS \psi_{I}}_{0,\tG} \; .
\end{align}
\end{subequations}

Next, we proceed to bound the four error terms on the right-hand side of \eqref{eq:dua1}.
Applying Proposition~\ref{thm:PoissonContinuity} and 
Proposition~\ref{thm:PoissonApproximation} (with $\psi$ in place of $u$), yields 
\begin{subequations}
\begin{align}
\label{eq:eBilinear}
a_h(e_u \, , \,  \psi-\psi_I ) 
&\leq\;
C_{\cal A}\, \|\,  e_u \, \|_{V(\tO;\tT)}  \, \|\,  \psi - \psi_I \, \|_{V(\tO;\tT)} 
\nonumber \\
&\leq\;
C_{\cal A}\, C_{APP} \,  h_{\tO} \, \|\,  e_u \, \|_{V(\tO;\tT)}  \, \| \, \nabla (\nabla \psi) \, \|_{0, \tO} 
\nonumber \\ 
& \leq\;
C_3 \, h_{\tO} \,  \| \,e_u \, \|_{V(\tO;\tT)} \, \| \, e_u \, \|_{0,\tO} \;.
\end{align}
Recalling Assumption~\ref{ass:d_asym}, Lemma~\ref{lem:errorDir}, equation~\eqref{eq:prop3_theoretical_cc}, Theorem~\ref{thm:TraceTheorem}, and Theorem~\ref{theo:BoundaryTraceIneq},
\begin{align}
\label{eq:eSym}
E_{sym}(e_u \, , \, \psi)
&\leq\;
c_{d} \, \hat{h}_{\tG}^{\zeta} \, 
\left( 
\|  \, h^{1/2} \, \nabla e_u  \cdot \bs{\nu} \, \|_{0,\tG}  \, \| \, h^{1/2} \, \nabla \psi  \cdot \ti{\bs{n}} \, \|_{0,\tG}  
+ \|  \, h^{1/2} \, \nabla e_u  \cdot \ti{\bs{n}} \, \|_{0,\tG}  \, \| \, h^{1/2} \, \nabla \psi  \cdot \bs{\nu} \, \|_{0,\tG}  
\right)
\nonumber \\
&\leq\;
c_{d} \, C \,  C_{I} \, h_{\tO}^{1/2} \, l(\tO)^{-3/2} \, 
\| \,e_u \, \|_{V(\tO;\tT)} \, \| \, \psi \, \|_{2,\tO} 
\nonumber \\ 
& \leq\;
C_4 \,  h_{\tO}^{1/2} \, l(\tO)^{1/2} \, 
 \| \,e_u \, \|_{V(\tO;\tT)} \, \| \, e_u \, \|_{0,\tO} \;.
\\[.2cm]
\label{eq:eRem}
E_{rem}(e_u \, , \, \psi)
&\leq\;
\left( 
\|  \, h^{1/2} \, \nabla e_u  \cdot \ti{\bs{n}} \, \|_{0,\tG}  
+ \alpha \, \|  \, h^{-1/2} \, \tS e_u \, \|_{0,\tG} 
\right) 
\| \, h^{-1/2} \,  R_h \psi \, \|_{0,\tG}  
\nonumber \\
&\leq\;
\left( C_I + \alpha \right)  \, 
\| \,e_u \, \|_{V(\tO;\tT)}  \, 
\| \, h^{-1/2} \,  R_h \psi \, \|_{0,\tG}   
\nonumber \\
&\leq\;
C_{5} \, h_{\tO} \, 
\| \,e_u \, \|_{V(\tO;\tT)}  \, \| \, e_u \, \|_{0,\tO} \;.
\\[.2cm]
\label{eq:eOrt}
E_{ort}(u \, , \, \psi_{I}) 
& = \; 
- \avg{R_h u \, , \, \nabla ( \psi - \psi_{I} ) \cdot \ti{\bs{n}} - \nabla \psi  \cdot \ti{\bs{n}} }_{0,\tG} 
+ \avg{ \alpha \,  h^{-1} \, R_h u \, , \, \tS ( \psi - \psi_{I}) - \tS  \psi}_{0,\tG} 
\nonumber \\ 
& \leq \;
\| \, h^{-1/2} \,  R_h u \, \|_{0,\tG} \, 
\left( 
\|  \, h^{1/2} \, \nabla ( \psi - \psi_{I} ) \cdot \ti{\bs{n}}  \, \|_{0,\tG}
+   \|  \, h^{1/2} \, \nabla \psi \cdot \ti{\bs{n}}  \, \|_{0,\tG}
\right)
\nonumber \\
& \phantom{=} \;
+ \alpha \, \| \, h^{-1/2} \,  R_h u \, \|_{0,\tG} \, 
 \left( 
\|  \, h^{-1/2} \, \tS ( \psi - \psi_{I} )  \, \|_{0,\tG}
+  \|  \, h^{-1/2} \, \tS \psi  \, \|_{0,\tG}
\right)
\nonumber \\
& \leq \; 
C_6 \, h_{\tO}^{3/2}  \, l(\tO)^{1/2} \,  
\| \, \nabla(\nabla u) \, \|_{0,\Om \setminus \tO} \, 
 \|  \, e_u  \, \|_{0,\tO}  \, .
\end{align}
\end{subequations}
 Thus, substituting~\eqref{eq:eBilinear},~\eqref{eq:eSym},~\eqref{eq:eRem} and~\eqref{eq:eOrt} in~\eqref{eq:dua1}, we obtain 
 \begin{align}
\| \, u - u_h \, \|_{0,\tO} 
\leq 
C_{AN} \, 
 h_{\tO}^{1/2} \, l(\tO)^{1/2} \,  \| \, u -u_h \, \|_{V(\tO;\tT)} + C_6 \, h_{\tO}^{3/2}  \, l(\tO)^{1/2} \,  
\| \, \nabla(\nabla u) \, \|_{0,\Om \setminus \tO} 
\; ,
\end{align}
where the the right hand side can be bound by a direct application of Theorem~\ref{thm:PoissonConvergenceNatural} to conclude the proof.
\end{proof}

\begin{rem}
The bound obtained in Theorem~\ref{thm:PoissonDuality} is suboptimal, since for a body-fitted Nitsche discretization one would obtain quadratic convergence in the $L^2$-norm of the error. 
However, it is not clear if the above estimate is sharp, since in computations we always observe optimal, second-order convergence rates.
A careful inspection of the above proof indicates that the only non-optimal bound is \eqref{eq:eSym}; it is likely that further cancellations occur in this term, while $\tG$ approaches $\G$ is a smooth way.
\end{rem}

\section{The SBM for the Stokes flow equations \label{sec:sbm_stokes}}
The strong form of the Stokes flow equations with non-homogeneous Dirichlet and Neumann boundary conditions read
\begin{subequations}
\label{eq:SteadyStokes}
\begin{align}
- \nabla \cdot ( 2 \mu \, \bs{\varepsilon}(\bs{u}) - p \bs{I}  ) &=\; \bs{f} 
\qquad \text{\ \ in \ } \Om \; , \\
\nabla \cdot  \bs{u}  &=\; 0
\qquad \text{\ \ in \ } \Om \; , \\
\bs{u} &=\; \bs{u}_D
\qquad \text{on \ }  \G_{D} \; , \\ 
\label{eq:tractionB}
( 2 \mu \, \bs{\varepsilon}(\bs{u}) - p \bs{I}  ) \cdot \bs{n} &=\; \bs{t}_N
\qquad \text{on \ }  \G_{N} \; ,
\end{align}
\end{subequations}
where $\bs{\varepsilon}(\bs{u}) = 1/2 (\nabla \bs{u} + \nabla \bs{u}^T)$ is the velocity strain tensor (i.e., the symmetric gradient of the velocity), $\mu>0$ is the dynamic viscosity, $p$ is the pressure, $\bs{f}$ is a body force, $\bs{u}_D$ is the value of the velocity on the Dirichlet boundary $\G_D \not = \emptyset$ and $\bs{t}_N$ is the vector-valued normal stress on the Neumann boundary $\G_N$ (where $\partial\Om = \G = \overline{\G_D \cup \G_N}$ with $\G_D \cap \G_N  = \emptyset$). The Stokes flow represents a prototype for the application of the SBM to systems of differential equations in mixed form. 
\subsection{Existence, uniqueness and regularity of the infinite dimensional problem}
 We recall well-known facts about the solution of the Stokes problem above (see e.g. \cite{girault2012finite}).
\begin{thm}
\label{thm:StokesExistenceUniqueness}
Let $\Om$ be a bounded and connected open subset of of $\mathbb{R}^{n_d}$ with Lipschitz boundary $\G$. Assume $\bs{f} \in L^2(\Om)^{n_d}$, $\bs{u}_D \in H^{1/2}(\G_D)^{n_d}$ such that $\int_{\G} \bs{u}_D \cdot \bs{n} = 0$ if $\Gamma_N=\emptyset$, and $\bs{t}_N \in L^2(\Gamma_N)^{n_d}$. Then, Problem \eqref{eq:SteadyStokes} admits a unique solution $[\bs{u}\, , \, p] \in H^1(\Om)^{n_d} \times L^2(\Om)$ satisfying $\int_\Omega p =0$ if  $\Gamma_N=\emptyset$.
In addition, if $\Gamma$ is of class $\mathcal{C}^{2}$ and $\Gamma_N=\emptyset$, and if $\bs{u}_D \in H^{3/2}(\G)^{n_d}$, then $[\bs{u}\, , \, p] \in H^{2}(\Om)^{n_d} \times H^{1}(\Om)$  with the bound 
\begin{align}
\| \, \bs{u} \, \|_{2,\Om} + \| \, p \, \|_{1,\Om} \leq C \, \left( \| \, \bs{f} \, \|_{0,\Om} + \| \, \bs{u}_D \,  \|_{3/2,\G} \right) \; .
\end{align}
\end{thm}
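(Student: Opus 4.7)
The plan is to recast Problem~\eqref{eq:SteadyStokes} as a mixed variational problem in saddle-point form and then invoke the Brezzi theory. First, I would lift the non-homogeneous Dirichlet datum: by the trace theorem on Lipschitz domains, there exists $\bs{u}_L \in H^1(\Om)^{n_d}$ whose trace on $\G_D$ coincides with $\bs{u}_D$. When $\G_N = \emptyset$, one can additionally arrange that $\nabla \cdot \bs{u}_L = 0$ thanks precisely to the compatibility condition $\int_\G \bs{u}_D \cdot \bs{n} = 0$ (Bogovskii construction). Writing $\bs{u} = \bs{u}_L + \bs{u}_0$ with $\bs{u}_0$ in $V := \{ \bs{v} \in H^1(\Om)^{n_d} : \bs{v}|_{\G_D} = \bs{0}\}$ reduces \eqref{eq:SteadyStokes} to: find $(\bs{u}_0, p) \in V \times Q$ such that
\begin{align*}
a(\bs{u}_0, \bs{v}) + b(\bs{v}, p) &= F(\bs{v}) \qquad \forall \bs{v} \in V, \\
b(\bs{u}_0, q) &= 0 \qquad \forall q \in Q,
\end{align*}
with $a(\bs{u}, \bs{v}) = 2\mu\, (\bs{\varepsilon}(\bs{u}), \bs{\varepsilon}(\bs{v}))_\Om$, $b(\bs{v}, q) = -(\nabla \cdot \bs{v}, q)_\Om$, the linear form $F$ encoding $\bs{f}$, $\bs{t}_N$ and the lifting contribution, and $Q = L^2_0(\Om)$ when $\G_N = \emptyset$, $Q = L^2(\Om)$ otherwise.

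Next, I would verify the two hypotheses of Brezzi's theorem. Coercivity of $a$ on all of $V$ follows from Korn's second inequality combined with Poincar\'e--Friedrichs, which applies because $\G_D$ has positive surface measure; this yields $a(\bs{v}, \bs{v}) \geq \alpha \, \| \, \bs{v} \, \|^2_{1,\Om}$. The inf-sup (Ladyzhenskaya--Babu\v{s}ka--Brezzi) condition
\begin{equation*}
\inf_{q \in Q} \, \sup_{\bs{v} \in V} \frac{b(\bs{v}, q)}{\| \, \bs{v} \, \|_{1,\Om} \, \| \, q \, \|_{0,\Om}} \, \geq \, \beta \, > \, 0
\end{equation*}
is the classical surjectivity of the divergence on Lipschitz domains, equivalent to Ne\v{c}as' negative-norm estimate $\| \, q \, \|_{0,\Om} \leq C \, \| \, \nabla q \, \|_{-1,\Om}$ on $Q$. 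With both hypotheses in hand, Brezzi's theorem delivers existence, uniqueness and the $H^1 \times L^2$ bound for $(\bs{u}_0, p)$, and hence for $(\bs{u}, p)$; in the pure-Dirichlet case, restricting pressures to $L^2_0(\Om)$ eliminates the constant-pressure mode.

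For the regularity assertion, with $\G_N = \emptyset$, $\G$ of class $\mathcal{C}^2$, $\bs{f} \in L^2(\Om)^{n_d}$ and $\bs{u}_D \in H^{3/2}(\G)^{n_d}$, I would invoke the classical Cattabriga/Solonnikov/Amrouche--Girault regularity theory for the stationary Stokes system, which upgrades the weak solution to $H^2(\Om)^{n_d} \times H^1(\Om)$ with the stated continuity estimate. The argument proceeds by localization near $\G$, flattening the boundary using $\mathcal{C}^2$ charts, applying tangential difference-quotient estimates to obtain $H^2$ control in tangential directions for $\bs{u}$ and $H^1$ control for $p$, and then exploiting the momentum equation and the incompressibility constraint to recover the missing normal derivatives.

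The main obstacle is the inf-sup condition on a general Lipschitz domain: constructive proofs via Bogovskii's integral operator require some smoothness, so one must appeal to Ne\v{c}as' negative-norm inequality, whose proof is rather delicate at this minimal regularity. A secondary technical point is the construction of a divergence-free lifting of $\bs{u}_D$ in the pure-Dirichlet case, which again hinges on the surjectivity of the divergence operator onto $L^2_0(\Om)$.
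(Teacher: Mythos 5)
Your proposal is correct and is essentially the standard argument that the paper itself does not reproduce: the paper simply cites Girault--Raviart for this theorem, and your sketch (lifting of the Dirichlet datum, Brezzi saddle-point theory with Korn coercivity and the Ne\v{c}as/divergence inf-sup condition, followed by Cattabriga--Amrouche--Girault regularity for the $H^2\times H^1$ upgrade) is precisely the proof contained in that reference. Nothing further is needed.
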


\subsection{Weak discrete formulation}\label{sec:discrSBM}
Before stating the weak discrete formulation, we will make the following
\begin{assumption}
	\label{ass:NeumannAssumption}
	The Neumann boundary is body-fitted, that is $\tGN=\G_N$ (see Section~\ref{sec:sbmDef}).
\end{assumption}
\begin{rem}
	Assumption~\ref{ass:NeumannAssumption}, which at first look may seem restrictive, is actually most frequently verified in applications involving the Stokes and Navier-Stokes equations. In this context, Neumann conditions are simply inflow and outflow conditions, and are typically applied on a portion of the boundary that has been meshed using a body-fitted grid. Note also that Assumption~\ref{ass:NeumannAssumption} can be relaxed in practical computations, as shown for example in~\cite{main2018shifted}, where numerical results with embedded inflows/outflows appear to be stable and accurate.
\end{rem}
We introduce next the discrete spaces $\bs{V}_h(\tO)$ and $Q_h(\tO)$, for the velocity and the pressure, respectively. We assume that a stable and convergent base formulation for the Stokes flow exist for these spaces in the case of body-fitted grids. For example, if we consider the piecewise linear spaces
\begin{subequations}
\begin{align}
\bs{V}_h(\tO) & = \; \left\{ \bs{v}_h \in C^0(\tO)^{n_d} \ | \  {\bs{v}_h}_{|T} \in  \mathcal{P}^1(T)^{n_d} \, , \, \forall T \in \ti{\mathcal{T}}_h \right\}  \, , \\
Q_h(\tO) & = \; \left\{ q_h \in C^0(\tO)  \ | \ {q_h}_{|T} \in \mathcal{P}^1(T)  \, , \, \forall T \in \ti{\mathcal{T}}_h \right\}  \, ,
	\end{align}
\end{subequations}
the stabilized formulation of Hughes {\it et al.}~\cite{Hughes198785} will satisfy these assumptions. For the sake of simplicity, we will use this formulation in what follows, but alternative choices are possible, such as, for example, discontinuous Galerkin spaces. 
In the case of pure Dirichlet conditions, that is $\G_N=\emptyset$, the space $Q_h(\tO) $ needs to be modified as
\begin{align}
Q_{h}(\tO) & = \; \left\{ q_h \in {Q_h}(\tO)  \ |   \int_{\tO} q_h = 0  \right\}  \, .
\end{align}
It is also convenient to introduce the product space ${\bs{W}_h}(\tO) = {\bs{V}_h}(\tO) \times {Q_h}(\tO)$.\\ 

\noindent Discretizing Problem~\eqref{eq:SteadyStokes} in $\tO$, enforcing~\eqref{eq:Finalbsu-g} on $\tGD$ (see definition in Section~\ref{sec:sbmDef}) with $\bar{\bs{g}} = \bar{\bs{u}}_D$, applying~\eqref{eq:tractionB} on $\G_{N}$ (see Assumption~\ref{ass:NeumannAssumption}) and adopting an unsymmetric form of the velocity strain and pressure gradient terms, we deduce the following SBM weak form of \eqref{eq:SteadyStokes}: 
\begin{quote}
Find $[\bs{u}_h\, , \, p_h] \in \bs{W}_h(\tO)$ such that, $\forall [\bs{w}_h\, , \, q_h] \in \bs{W}_h(\tO)$,
\begin{align}
\label{eq:ShiftedNitscheVariationalFormSteadyStokes}
0=&\;
(  2 \mu \, \bs{\varepsilon}(\bs{u}_h) \, , \,  \bs{\varepsilon}( \bs{w}_h) )_{\tO} 
-(  p_h \, , \, \nabla \cdot \bs{w}_h  )_{\tO} 
+(  \nabla \cdot \bs{u}_h \, , \, q_h )_{\tO} 
-(  \bs{f} \, , \, \bs{w}_h )_{\tO} 
\nonumber \\
&\;
-\avg{  2 \mu \, \bs{\varepsilon}(\bs{u}_h) - p_h \bs{I}  \, , \,  \bs{w}_h \otimes \ti{\bs{n}} }_{\tGD}
- \avg{ ( \bs{S}_{h} \bs{u}_h - \bar{\bs{u}}_D ) \otimes \ti{\bs{n}}  \, , \, 2 \mu \, \bs{\varepsilon}(\bs{w}_h) + q_h \bs{I} }_{\tGD}
\nonumber \\
& \;
+\alpha \, \avg{ 2 \mu \, h_{\perp}^{-1}   \, ( \bs{S}_{h} \bs{u}_h - \bar{\bs{u}}_D ) \, , \, \bs{S}_{h} \bs{w}_h  }_{\tGD} 
- \avg{  \bs{t}_N \, , \, \bs{w}_h }_{\G_N}
\nonumber \\
& \;
+ \gamma \sum_{T \in \ti{\cal T}_h} \left(  h_{\tau}^{2} \, (2\mu)^{-1} \, \left(- \nabla \cdot (2 \mu \, \bs{\varepsilon}(\bs{u}_h)) + \nabla p_h - \bs{f} \right)  \, , \, \nabla q_h  \right)_{T} 
\; ,
\end{align}
\end{quote}
where the Nitsche's stabilization parameter $\alpha  > 0$ helps in the weak imposition of the Dirichlet boundary condition,
whereas the parameter $\gamma>0$ scales a pressure stabilization term required by equal-order velocity/pressure pairs~\cite{Hughes198785}.
As for the weak Poisson problem discussed in Section~\ref{sec:weak_discr_poisson}, we assume there exist constants $C_{r}$, $\xi_{1} $, $\xi_{2} \in \mathbb{R}^{+}$ such that $(1/\sqrt{C_r}) \, h \leq h_\tau \leq h$ and $\xi_{1} \, h \leq h_{\perp} \leq \xi_{2} \, h$. Again, with slight abuse of notation, we will assume $h$, $h_{\tau}$ and $h_{\perp}$ are interchangeable. 
 \begin{rem}
 The variational statement \eqref{eq:ShiftedNitscheVariationalFormSteadyStokes} does not include a stabilization term involving the tangential derivative of the Dirichlet boundary conditions nor a stabilization term on the incompressibility constraint, as was the case in~\cite{main2018shifted0,main2018shifted,atallah2020analysis}. The main reason for their introduction in addition to assuming $\inf_{\tG} \ti{\bs{n}} \cdot \bs{\nu}  > 0$  was to attain coercivity of the bilinear form $a(\bs{u}_h\, , \, \bs{w}_h)$. However, as it will be clearer from what follows, asymptotic coercivity can be proved by simply relying on Assumption~\ref{ass:d_asym}.
 \end{rem}
 \begin{rem}
 The proposed algorithm can be shown to satisfy statements of global conservation of mass and momentum. We refer the reader to~\cite{atallah2020analysis} for more details.
 \end{rem}
\noindent The variational statement~\eqref{eq:ShiftedNitscheVariationalFormSteadyStokes} can be succinctly expressed as: 
 \begin{quote}
 Find $[\bs{u}_h\, , \, p_h] \in \bs{W}_h(\tO)$ such that, $\forall [\bs{w}_h\, , \, q_h] \in \bs{W}_h(\tO)$,
\begin{subequations} 
	\label{eq:SBMequations}
\begin{align}
\mathcal{B}([\bs{u}_h\, ,\,p_h];[\bs{w}_h\, , \, q_h] )
&=\;
\mathcal{L}([\bs{w}_h\, , \, q_h] )  \; , 
\end{align}
where
\begin{align}
\mathcal{B}([\bs{u}_h \, , \, p_h];[\bs{w}_h\, , \, q_h] )
& = \;
a(\bs{u}_h\, , \, \bs{w}_h) 
+ b(p_h\, , \, \bs{w}_h) 
- b(q_h\, , \, \bs{u}_h) 
- \bar{b}( \bs{u}_h \, , \, q_h) 
+ c(p_h\, , \, q_h) \; , 
\label{eq:Bform}
\\[.1cm]
\mathcal{L}([\bs{w}_h\, , \, q_h] )
& = \;
l_f(\bs{w}_h) 
+ l_g(q_h) \; .
\end{align}
with
\begin{align}
\label{eq:aBilinearStokes}
a(\bs{u}_h \, , \, \bs{w}_h) 
&=\;
(  2 \mu \, \bs{\varepsilon}(\bs{u}_h) \, , \,  \bs{\varepsilon}( \bs{w}_h) )_{\tO} 
- \avg{  2 \mu \, \bs{\varepsilon}(\bs{u}_h) \, , \,  \bs{w}_h \otimes \ti{\bs{n}}   }_{\tGD}
\nonumber \\
&\phantom{0=}\;
- \avg{ \bs{S}_{h} \bs{u}_h \otimes \ti{\bs{n}}  \, , \, 2 \mu \, \bs{\varepsilon}(\bs{w}_h) }_{\tGD}
+\alpha \, \avg{ 2 \mu \, h^{-1} \, \bs{S}_{h} \bs{u}_h \, , \,   \bs{S}_{h} \bs{w}_h }_{\tGD} \; ,
\\[.1cm]
\label{eq:bBilinearStokes}
b(p_h\, , \, \bs{w}_h)  
&=\;
-(  p_h \, , \,  \nabla \cdot \bs{w}_h  )_{\tO} 
+\avg{  p_h  \, , \, \bs{w}_h \cdot \ti{\bs{n}}   }_{\tGD} \; ,
\\[.1cm]
\label{eq:bBarBilinearStokes}
\bar{b}( \bs{u}_h  \, , \, q_h ) 
& =  \;
\avg{  ( \nabla \bs{u}_h \, \bs{d})  \cdot  \ti{\bs{n}}  \, , \, q_h  }_{\tGD}
+ \gamma \sum_{T \in \ti{\cal T}_h} \left(  h^{2} \, (2\mu)^{-1} \, \nabla \cdot (2 \mu \, \bs{\varepsilon}(\bs{u}_h))   \, , \, \nabla q_h \right)_{T} \; ,
\\[.1cm]
c(p_h \, , \, q_h) 
&=\; 
\gamma \,  (  h^{2} \, (2\mu)^{-1} \,  \nabla p_h \, , \,   \nabla q_h  )_{\tO}  \; ,
\\[.1cm]
l_f(\bs{w}_h)
&=\;
( \bs{f} \, , \, \bs{w}_h  )_{\tO} 
- \avg{ \bar{\bs{u}}_D \otimes \ti{\bs{n}}  \, , \, 2 \mu \, \bs{\varepsilon}(\bs{w}_h)   }_{\tGD}
+\alpha \, \avg{ 2 \mu \, h^{-1} \, \bar{\bs{u}}_D  \, , \,  \bs{S}_{h} \bs{w}_h }_{\tGD} 
\nonumber \\
&\phantom{=} \;
+ \avg{ \bs{t}_N \, , \, \bs{w}_h  }_{\G_N} \; ,
\\[.1cm]
l_g(q_h)
&=\;
- \avg{  \bar{\bs{u}}_D  \cdot \ti{\bs{n}}  \, , \, q_h }_{\tGD}
+ \gamma \,  (  h^2 \, (2\mu)^{-1} \, \bs{f} \, , \,  \nabla q_h )_{\tO}  \; .
\end{align}
\end{subequations}
\end{quote}
\subsection{Well-posedness and stability}\label{sec:ShiftedNitscheStablityAnalysis}

The first step in our analysis is to prove that the bilinear form $a(\cdot,\cdot)$ is coercive, under suitable assumptions. 
In a second step, we will establish that the bilinear form $\mathcal{B}(\cdot,\cdot)$ satisfies a uniform inf-sup condition. 
This will immediately imply the existence and uniqueness of the solution of the discrete SBM problem, and will be lately used to prove its convergence to the exact solution, with optimal error estimates in an appropriate natural norm. 
We start by proving an intermediate technical result.
\begin{lemma}\label{lemma:SBM-korn-norm} Let $\bar{C}_K $ be the constant in the Korn inequality \eqref{eq:korn2}. Then, $\forall \bs{u}_h \in \bs{V}_h(\tO)$, 
\begin{align}
\label{eq:SBMKorn-coercivity}
l(\tO)^{-2} \, \| \, \bs{u}_h \, \|^{2}_{0,\tO} 
+ \| \, \nabla \bs{u}_h \, \|^{2}_{0,\tO} 
&\leq \; 
{2} \, \bar{C}_K^2  
\left(  
\| \, h^{-1/2} \, \bs{S}_{h} \bs{u}_h \, \|^2_{0,\tGD} 
+ ( c_{d}\, C_{I} \, \hat{h}_{\tGD}^{\zeta}  )^{2} \, \| \, \nabla \bs{u}_h  \,  \|^2_{0,\tO}  
\right.
\nonumber \\
&\phantom{\leq} \; 
\left.
\qquad \quad + 1/2 \, \| \, \bs{\varepsilon}(\bs{u}_h) \, \|^{2}_{0,\tO} 
\right)
\; .
\end{align}
\end{lemma}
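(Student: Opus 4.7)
The plan is to leverage a classical Korn-type inequality (the assumed \eqref{eq:korn2}) that, for a function vanishing on a portion of the boundary, bounds the full $H^1$-seminorm (with the $l(\tO)$-scaled $L^2$ part) by the symmetric gradient plus a boundary penalty term. Roughly, I expect \eqref{eq:korn2} to take the shape
\begin{equation*}
l(\tO)^{-2} \, \| \, \bs{v} \, \|^{2}_{0,\tO} + \| \, \nabla \bs{v} \, \|^{2}_{0,\tO} \;\leq\; \bar{C}_K^{2} \left( \| \, h^{-1/2} \, \bs{v} \, \|^{2}_{0,\tGD} + \| \, \bs{\varepsilon}(\bs{v}) \, \|^{2}_{0,\tO} \right) \qquad \forall \bs{v} \in \bs{V}_h(\tO)\; .
\end{equation*}
The key observation is that in the SBM, $\bs{u}_h$ does not vanish on $\tGD$, but the shifted quantity $\bs{S}_{h} \bs{u}_h = \bs{u}_h + \nabla \bs{u}_h \, \bs{d}$ does (up to the boundary data). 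Hence on $\tGD$ I can write the pointwise identity $\bs{u}_h = \bs{S}_{h} \bs{u}_h - \nabla \bs{u}_h \, \bs{d}$ and pass to norms.

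\textbf{Step 1.} Apply the Korn inequality above with $\bs{v} = \bs{u}_h$.

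\textbf{Step 2.} Split the boundary term using the triangle inequality followed by Young's inequality with constant $2$:
\begin{equation*}
\| \, h^{-1/2} \, \bs{u}_h \, \|^{2}_{0,\tGD} \;\leq\; 2 \, \| \, h^{-1/2} \, \bs{S}_{h} \bs{u}_h \, \|^{2}_{0,\tGD} + 2 \, \| \, h^{-1/2} \, \nabla \bs{u}_h \, \bs{d} \, \|^{2}_{0,\tGD}\; .
\end{equation*}

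\textbf{Step 3.} Control the second boundary term by combining the size of the distance vector and the discrete trace inequality. By Assumption~\ref{ass:d_asym}, $\|\bs{d}\| \leq c_d \, h_T \, \hat{h}_T^{\zeta}$ on each $T \cap \tGD$, so $\| h^{-1/2} \, \nabla \bs{u}_h \, \bs{d} \|_{0,\tGD} \leq c_d \, \hat{h}_{\tGD}^{\zeta} \, \| h^{1/2} \, \nabla \bs{u}_h \|_{0,\tGD}$; then the discrete trace inequality from Theorem~\ref{theo:discretetrace} gives $\| h^{1/2} \, \nabla \bs{u}_h \|_{0,\tGD} \leq C_I \, \| \nabla \bs{u}_h \|_{0,\tO}$. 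Chaining these yields
\begin{equation*}
\| \, h^{-1/2} \, \nabla \bs{u}_h \, \bs{d} \, \|^{2}_{0,\tGD} \;\leq\; (c_d \, C_I \, \hat{h}_{\tGD}^{\zeta})^{2} \, \| \, \nabla \bs{u}_h \, \|^{2}_{0,\tO}\; .
\end{equation*}

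\textbf{Step 4.} Substitute the outcomes of Steps 2 and 3 into the Korn bound of Step 1 and factor out $2$ to reach exactly \eqref{eq:SBMKorn-coercivity}. No obstacle is expected here; the only delicate point is making sure the constants $\bar{C}_K$, $c_d$, $C_I$ enter with the prescribed powers, which is a matter of bookkeeping. The real substance of the lemma is the mechanism of Steps 2--3, where the non-conformity of the boundary values is absorbed into a term proportional to $\hat{h}_{\tGD}^{\zeta}$, which will eventually be rendered harmless by taking the mesh sufficiently fine, exactly as in the Poisson coercivity argument in Theorem~\ref{thm:coerc-aPoisson}.
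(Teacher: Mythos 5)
Your proposal is correct and follows essentially the same route as the paper's proof: apply the Korn inequality \eqref{eq:korn2}, write $\bs{u}_h = \bs{S}_{h}\bs{u}_h - \nabla\bs{u}_h\,\bs{d}$ on $\tGD$, split with the triangle/Young inequality, and absorb the $\nabla\bs{u}_h\,\bs{d}$ contribution via Assumption~\ref{ass:d_asym} together with the discrete trace inequality of Theorem~\ref{theo:discretetrace}. The constant bookkeeping (including the factor $1/2$ in front of $\|\bs{\varepsilon}(\bs{u}_h)\|^2_{0,\tO}$ after factoring out the $2$) works out exactly as you describe.
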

\proof 
Korn's inequality~\eqref{eq:korn2} yields
\begin{align}
\label{SBM_korn_p2}
l(\tO)^{-2} \, \| \, \bs{u}_h \, \|^{2}_{0,\tO}  
+ \|  \, \nabla \bs{u}_h \, \|^{2}_{0,\tO} 
\leq 
\bar{C}_K^2 
\left( 
\| \, h^{-1/2} \, \bs{u}_h \, \|^{2}_{0,\tGD}  
+ \| \, \bs{\varepsilon}(\bs{u}_h) \, \|^{2}_{0,\tO} 
\right)
\; .
\end{align}
Using the triangle inequality, Assumption~\ref{ass:d_asym} and Theorem~\ref{theo:discretetrace}, 
\begin{align}
\| \, h^{-1/2} \, \bs{u}_h \, \|_{0,\tGD} 
\leq&\;
 \| \, h^{-1/2} \,\bs{S}_{h} \bs{u}_h \, \|_{0,\tGD} 
 + \| \, h^{-1/2} \, \nabla \bs{u}_h \, \bs{d} \, \|_{0,\tGD} 
\nonumber \\
\leq&\;
\| \, h^{-1/2} \, \bs{S}_{h} \bs{u}_h \, \|_{0,\tGD} 
+ c_d \, C_{I} \, \hat{h}_{\tGD}^{\zeta} \, \|  \, \nabla \bs{u}_h \,  \|_{0,\tO} 
\; .
\end{align}
Thus, 
\begin{align}
\label{eq:kornproofFinal}
\| \, h^{-1/2} \, \bs{u}_h \, \|^{2}_{0,\tGD} \leq  2 \left( \| \, h^{-1/2} \, \bs{S}_{h} \bs{u}_h \, \|^{2}_{0,\tGD}  + ( c_{d}\, C_{I} \, \hat{h}_{\tGD}^{\zeta}  )^{2} \, \| \, \nabla \bs{u}_h  \, \|^{2}_{0,\tO}  \right) \; .
\end{align}
Substituting~\eqref{eq:kornproofFinal} into~\eqref{SBM_korn_p2} completes the proof.
\endproof
\begin{thm}[Coercivity]\label{thm:coerc-a}
Consider the bilinear form $a(\cdot, \cdot)$ defined in~\eqref{eq:aBilinearStokes}. 
If the parameter $\alpha$  is sufficiently large and the quantity $c_d \,  \hat{h}_{\tGD}^{\zeta}$ is sufficiently small, there exists a constant $C_a>0$ independent of the mesh size, such that
\begin{equation} \label{eq:40}
a(\bs{u}_h\, , \, \bs{u}_h)  
\geq 
C_{a} \, \| \, \bs{u}_h \, \|^2_{a} 
\, \qquad 
\forall \bs{u}_h \in \bs{V}_h(\tO),
\end{equation}
where 
\begin{align} \label{eq:def-anorm}
 \| \, \bs{u}_h \, \|_{a}^2
&=\;
l(\tO)^{-2} \, \| \, (2 \mu)^{1/2} \, \bs{u}_h \, \|^{2}_{0,\tO} 
+ \| \, (2 \mu)^{1/2} \, \nabla \bs{u}_h \|^{2}_{0,\tO}   
+ \| \, (2 \mu \, h^{-1})^{1/2} \, \bs{S}_{h} \bs{u}_h \,  \|^2_{0,\tGD}
\nonumber \\
&\phantom{s=}\;
+ \| \, (2 \mu \, \|\, \bs{d} \, \|)^{1/2} \, \nabla \bs{u}_h \, \|^{2}_{0,\tGD}
 \; .
\end{align}
\end{thm}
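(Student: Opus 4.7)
The approach closely parallels that of Theorem~\ref{thm:coerc-aPoisson} for the Poisson problem, with Korn's inequality in its SBM form (Lemma~\ref{lemma:SBM-korn-norm}) replacing the straightforward gradient-based coercivity. I would start by substituting $\bs{w}_h = \bs{u}_h$ in \eqref{eq:aBilinearStokes} and using the symmetry of the $L^2$ inner product on $\tGD$ to collect the two consistency terms as
\begin{equation*}
a(\bs{u}_h,\bs{u}_h) = \|(2\mu)^{1/2}\bs{\varepsilon}(\bs{u}_h)\|_{0,\tO}^2 - 2\langle 2\mu\,\bs{\varepsilon}(\bs{u}_h),\bs{S}_{h}\bs{u}_h \otimes \tn\rangle_{\tGD} + \langle 2\mu\,\bs{\varepsilon}(\bs{u}_h),(\nabla\bs{u}_h\,\bs{d})\otimes\tn\rangle_{\tGD} + \alpha\,\|(2\mu\,h^{-1})^{1/2}\bs{S}_{h}\bs{u}_h\|_{0,\tGD}^2,
\end{equation*}
where I used $\bs{u}_h = \bs{S}_h\bs{u}_h - \nabla\bs{u}_h\bs{d}$ to rewrite the symmetric pair in terms of $\bs{S}_h\bs{u}_h$ (the ``stable'' boundary operator) plus a correction involving $\bs{d}$.

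Next I would apply Young's $\epsilon$-inequality to the consistency cross term in the familiar Nitsche fashion, using the discrete trace inequality $\|h^{1/2}\bs{\varepsilon}(\bs{u}_h)\|_{0,\tGD}^2 \leq C_I\|\bs{\varepsilon}(\bs{u}_h)\|_{0,\tO}^2$ from Theorem~\ref{theo:discretetrace}: this produces a term $-\epsilon_1 C_I\|(2\mu)^{1/2}\bs{\varepsilon}(\bs{u}_h)\|_{0,\tO}^2$ absorbable into the bulk symmetric-gradient term, together with $-\epsilon_1^{-1}\|(2\mu\,h^{-1})^{1/2}\bs{S}_h\bs{u}_h\|_{0,\tGD}^2$ absorbable into the Nitsche penalty once $\alpha$ is taken large. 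For the $\bs{d}$-dependent cross term I would use Cauchy--Schwarz followed by $\|\bs{d}\| \leq c_d h_T \hat{h}_T^{\zeta}$ from Assumption~\ref{ass:d_asym} together with the discrete trace inequality to obtain a bound proportional to $c_d \hat{h}_{\tGD}^{\zeta}\,C_I\,\|(2\mu)^{1/2}\bs{\varepsilon}(\bs{u}_h)\|_{0,\tO}\,\|(2\mu)^{1/2}\nabla\bs{u}_h\|_{0,\tO}$, which a second Young's inequality splits into a $\hat{h}^\zeta$-small multiple of $\|\bs{\varepsilon}(\bs{u}_h)\|_{0,\tO}^2$ plus a $\hat{h}^\zeta$-small multiple of $\|\nabla\bs{u}_h\|_{0,\tO}^2$. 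At this stage I have a lower bound of the form
\begin{equation*}
a(\bs{u}_h,\bs{u}_h) \geq c_1 \|(2\mu)^{1/2}\bs{\varepsilon}(\bs{u}_h)\|_{0,\tO}^2 + c_2\,\|(2\mu\,h^{-1})^{1/2}\bs{S}_h\bs{u}_h\|_{0,\tGD}^2 - c_3\,c_d\,\hat{h}_{\tGD}^{\zeta}\,\|(2\mu)^{1/2}\nabla\bs{u}_h\|_{0,\tO}^2,
\end{equation*}
with $c_1,c_2>0$ once $\epsilon_1$ is chosen appropriately and $\alpha$ is large enough.

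Now I would invoke Lemma~\ref{lemma:SBM-korn-norm} to convert $\|\bs{\varepsilon}(\bs{u}_h)\|^2 + \|h^{-1/2}\bs{S}_h\bs{u}_h\|^2$ into control of the full term $l(\tO)^{-2}\|\bs{u}_h\|_{0,\tO}^2 + \|\nabla\bs{u}_h\|_{0,\tO}^2$, after first moving the $(c_dC_I\hat{h}_{\tGD}^\zeta)^2\|\nabla\bs{u}_h\|^2$ residual in \eqref{eq:SBMKorn-coercivity} to the left side (which requires $\hat{h}_{\tGD}^\zeta$ small enough, consistent with the theorem's hypothesis). Finally, to recover the last piece of the natural norm, I would observe that
\begin{equation*}
\|(2\mu\,\|\bs{d}\|)^{1/2}\nabla\bs{u}_h\|_{0,\tGD}^2 \leq 2\mu\,c_d\,\hat{h}_{\tGD}^{\zeta}\,\|h^{1/2}\nabla\bs{u}_h\|_{0,\tGD}^2 \leq 2\mu\,c_d\,C_I\,\hat{h}_{\tGD}^{\zeta}\,\|\nabla\bs{u}_h\|_{0,\tO}^2,
\end{equation*}
so this term is itself bounded by a small multiple of the gradient contribution already under control. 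Gathering everything and taking $\hat{h}_{\tGD}^{\zeta}$ small enough for all the absorption steps to close yields \eqref{eq:40} with $C_a$ depending on $\alpha$, $C_I$, $c_d$ and $\bar C_K$ but not on the mesh size.

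The main obstacle I anticipate is the careful choreography of the three smallness conditions (one $\epsilon_1$ from Nitsche, one $\hat{h}^\zeta$ for the $\bs{d}$ cross term, one $\hat{h}^\zeta$ to absorb the residual gradient term in Lemma~\ref{lemma:SBM-korn-norm}) so that the remaining positive coefficients in front of $\|\bs{\varepsilon}(\bs{u}_h)\|^2$ and $\|h^{-1/2}\bs{S}_h\bs{u}_h\|^2$ are large enough for the Korn step to actually produce a positive multiple of $l(\tO)^{-2}\|\bs{u}_h\|^2 + \|\nabla\bs{u}_h\|^2$. Unlike the Poisson case, one does not have direct coercivity on $\|\nabla\bs{u}_h\|^2$ from the bulk term --- it must be recovered through Korn --- and the Korn inequality itself already contains a small-$\hat{h}^\zeta$ correction, so the bookkeeping is more delicate than in Theorem~\ref{thm:coerc-aPoisson}.
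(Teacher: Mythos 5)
Your plan reproduces the paper's proof essentially step for step: the same rewriting of the boundary consistency term via $\bs{u}_h = \bs{S}_h\bs{u}_h - \nabla\bs{u}_h\,\bs{d}$ (the paper phrases it as adding and subtracting $\avg{\nabla\bs{u}_h(\bs{d}\otimes\tn),2\mu\,\bs{\varepsilon}(\bs{u}_h)}_{\tGD}$), the same Young/trace estimates with $\epsilon_1\sim C_I^{-1}$ and the smallness of $c_d\hat{h}_{\tGD}^{\zeta}$, the same invocation of Lemma~\ref{lemma:SBM-korn-norm} to recover $l(\tO)^{-2}\|\bs{u}_h\|_{0,\tO}^2+\|\nabla\bs{u}_h\|_{0,\tO}^2$ from $\|\bs{\varepsilon}(\bs{u}_h)\|_{0,\tO}^2$ and the penalty term, and the same final absorption of $\|(2\mu\|\bs{d}\|)^{1/2}\nabla\bs{u}_h\|_{0,\tGD}^2$ into the volumetric gradient via the trace inequality. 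The bookkeeping you flag as the main obstacle is exactly what the paper resolves with the explicit choices $\epsilon_1=(4C_I)^{-1}$, $c_dC_I\hat{h}_{\tGD}^{\zeta}\le 1/2$, and $\alpha>4C_I+1$, so the proposal is correct and takes the same route.
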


\begin{proof}
Substituting $\bs{u}_h$ for $\bs{w}_h$ in~\eqref{eq:aBilinearStokes} yields
\begin{align}
\label{eq:aCoercS1}
 a(\bs{u}_h\, , \, \bs{u}_h)  
&=\;
\| \, (2 \mu)^{1/2} \, \bs{\varepsilon}(\bs{u}_h) \, \|^2_{0,\tO}  
+\alpha \, \| \, (2 \mu \, h^{-1})^{1/2} \, \bs{S}_{h} \bs{u}_h \, \|^2_{0,\tGD} 
-  2 \, \avg{ 2 \mu \, \bs{\varepsilon}(\bs{u}_h) \, , \, \bs{S}_{h} \bs{u}_h \otimes \ti{\bs{n}} }_{\tGD}
\nonumber \\
&\phantom{0=}\;
+\avg{ \nabla \bs{u}_h \, (\bs{d} \otimes \ti{\bs{n}}) \, , \, 2 \mu \, \bs{\varepsilon}(\bs{u}_h) }_{\tGD}
\; , 
\end{align}
where the term  $\avg{ \nabla \bs{u}_h \, (\bs{d} \otimes \ti{\bs{n}}) \, , \, 2 \mu \, \bs{\varepsilon}(\bs{u}_h) }_{\tGD}$ has been added and subtracted.
Using Young's $\epsilon$-inequality and the third discrete trace inequality \eqref{eq:TraceInequality1} yields
\begin{subequations}
\begin{align}
\label{eq:aCoercS2}
| 2 \, \avg{ 2 \mu \, \bs{\varepsilon}(\bs{u}_h) \, , \, \bs{S}_{h} \bs{u}_h \otimes \ti{\bs{n}} }_{\tGD} |
&\leq\;
\epsilon_{1} \, \| \, (2 \mu \, h)^{1/2} \, \bs{\varepsilon}(\bs{u}_h) \ti{\bs{n}} \, \|^2_{0,\tGD} 
+\epsilon_{1}^{-1} \,  \| \, (2 \mu \, h^{-1})^{1/2} \, \bs{S}_{h} \bs{u}_h \, \|^2_{0,\tGD}
\nonumber \\
&\leq\;
\epsilon_{1} \, C_I \,
\| \, (2 \mu)^{1/2} \, \bs{\varepsilon}(\bs{u}_h) \, \|^2_{0,\tO} 
+\epsilon_{1}^{-1} \,  \| \, (2 \mu \, h^{-1})^{1/2} \, \bs{S}_{h} \bs{u}_h \, \|^2_{0,\tGD} \; ,
\end{align}
\begin{align}
\label{eq:aCoercS3}
|\avg{ \nabla \bs{u}_h \, (\bs{d} \otimes \ti{\bs{n}}) \, , \, 2 \mu \, \bs{\varepsilon}(\bs{u}_h) }_{\tGD} |
&\leq \;
1/2 \, c_d \, \hat{h}_{\tGD}^{\zeta} \, 
\left(
\| \, (2 \mu \, h)^{1/2} \, \nabla \bs{u}_h \bs{\nu} \, \|^{2}_{0,\tGD}
+ \| \, (2 \mu \, h)^{1/2} \, \bs{\varepsilon}(\bs{u}_h)\ti{\bs{n}} \, \|^{2}_{0,\tGD}
\right) 
\nonumber \\
&\leq\;
1/2 \, c_d \, C_I  \, \hat{h}_{\tGD}^{\zeta}  
\left( \| \, (2 \mu)^{1/2} \, \nabla \bs{u}_h \, \|^2 _{0,\tO} +
\| \, (2 \mu)^{1/2} \, \bs{\varepsilon}(\bs{u}_h) \, \|^2_{0,\tO} \right) \,.
\end{align}
\end{subequations}
Substituting~\eqref{eq:aCoercS2} and~\eqref{eq:aCoercS3}  into~\eqref{eq:aCoercS1}, we obtain
\begin{align}
  a(\bs{u}_h\, , \, \bs{u}_h) 
&\geq\;
\left(1- \epsilon_{1} \, C_I - 1/2\, c_d \, C_I  \, \hat{h}_{\tGD}^{\zeta}  \right) \, \| \, (2 \mu)^{1/2} \, \bs{\varepsilon}(\bs{u}_h) \, \|^2_{0,\tO}  
+\left( \alpha - \epsilon_{1}^{-1} \right) \,  \| \, (2 \mu \, h^{-1})^{1/2} \, \bs{S}_{h} \bs{u}_h \, \|^2_{0,\tGD}
\nonumber \\
&\phantom{=}\;
-1/2 \, c_d \, \hat{h}_{\tGD}^{\zeta} \,  C_I \,  \| \, (2 \mu)^{1/2} \, \nabla \bs{u}_h \, \|^2 _{0,\tO} \;.
\end{align}
If we choose $\epsilon_1= (4 \, C_I)^{-1}$, we have that, for sufficiently refined grids, $ c_d \, C_I \, \hat{h}_{\tGD}^{\zeta} \leq 1/2$ and
\begin{align}
\label{eq:aCoercS4}
 a(\bs{u}_h\, , \, \bs{u}_h)  
&\geq\;
1/2 \, \| \, (2 \mu)^{1/2} \, \bs{\varepsilon}(\bs{u}_h) \, \|^2_{0,\tO}
+\left( \alpha - 4C_I \right) \, \| \, (2 \mu \, h^{-1})^{1/2} \, \bs{S}_{h} \bs{u}_h \, \|^2_{0,\tGD}
\nonumber \\
\phantom{=}&\;
\quad 
-1/2 \, c_d \, C_I  \, \hat{h}_{\tGD}^{\zeta} \,  \| \, (2 \mu)^{1/2} \, \nabla \bs{u}_h \, \|^2 _{0,\tO} \;.
\end{align}
Replacing the first term in~\eqref{eq:aCoercS4} with the result \eqref{eq:SBMKorn-coercivity} of Lemma~\ref{lemma:SBM-korn-norm},
\begin{align}
 a(\bs{u}_h \, , \, \bs{u}_h)
&\geq\;
\left( ({2} \, \bar{C}_K^2)^{-1} - ( c_{d}\, C_{I} \, \hat{h}_{\tGD}^{\zeta} )^{2} - 1/2 c_d \, C_I \, \hat{h}_{\tGD}^{\zeta}  \right) 
\| \, (2 \mu)^{1/2} \, \nabla \bs{u}_h \, \|^{2}_{0,\tO}
\nonumber \\
&\phantom{=}\;
+  ({2} \, \bar{C}_K^2 )^{-1} \, l(\tO)^{-2} \| \, (2 \mu)^{1/2} \, \bs{u}_h \, \|^{2}_{0,\tO}
+\left( \alpha - 4C_I - 1\right) \, \| \, (2 \mu \, h^{-1})^{1/2} \, \bs{S}_{h} \bs{u}_h \, \|^2_{0,\tGD} \; .
\end{align}
Now, for a sufficiently fine grid, we can assume $ \left( ({2} \, \bar{C}_K^2)^{-1} - ( c_{d}\, C_{I} \, \hat{h}_{\tGD}^{\zeta}  )^{2} - 1/2 \, c_d \, C_I  \, \hat{h}_{\tGD}^{\zeta}  \, \right) \geq ({4} \, \bar{C}_K^2)^{-1} $ so that
\begin{align}
a(\bs{u}_h \, , \, \bs{u}_h)
&\geq\;
({4} \, \bar{C}_K^2)^{-1} \, 
\| \, (2 \mu)^{1/2} \, \nabla \bs{u}_h \, \|^{2}_{0,\tO}
+  ({2} \, \bar{C}_K^2 )^{-1} \, l(\tO)^{-2} \, \| \, (2 \mu)^{1/2} \, \bs{u}_h \, \|^{2}_{0,\tO}
\nonumber \\
&\phantom{=}\;
+\left( \alpha - 4C_I - 1\right) \, \| \, (2 \mu \, h^{-1})^{1/2} \, \bs{S}_{h} \bs{u}_h \, \|^2_{0,\tGD} \; .
\end{align}
Finally, noting that $\| \, (2 \mu)^{1/2} \, \nabla \bs{u}_h \, \|^{2}_{0,\tO} \geq C^{-1}_{I} \, \| \, (2 \mu \, \|\, \bs{d} \, \|)^{1/2} \, \nabla \bs{u}_h \, \|^{2}_{0,\tGD} $ and choosing  $\alpha > 4\,C_I +1$ we obtain the desired coercivity bound with $ C_a =  \min \left( \alpha - 4\,C_I -1\, , \, ({8}\, \bar{C}_K^2)^{-1}, ({8} \,C_{I} \, \bar{C}_K^2)^{-1} \right) \, .$ 
\end{proof}
Apart from the fact that we are setting $\bs{d} = \| \, \bs{d} \, \| \bs{\nu}$ instead of $\bs{d} = \| \, \bs{d} \, \| \bs{n}$, this new result represents the main difference from~\cite{atallah2020analysis}. This indicates that the assumption $\inf_{\tGD} \ti{\bs{n}} \cdot \bs{\nu}  > 0$ in addition to the tangential and incompressibility constraint stabilization terms introduced in the earlier versions of the SBM method~\cite{main2018shifted0,main2018shifted,atallah2020analysis} is just a set of sufficient (yet not necessary) conditions to obtain numerical stability.
\begin{thm}\label{thm:a-norm}
The quantity $\| \, \bs{u}_h \, \|_{a}$ defined in \eqref{eq:def-anorm} is a norm on $\bs{V}_h(\tO)$, equivalent to the norm $\| \, \bs{u}_h \, \|_{H^1(\tO)}$ (although not uniformly with respect to the mesh size).
\end{thm}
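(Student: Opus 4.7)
The plan is to verify the norm axioms and then establish two-sided bounds against $\|\bs{u}_h\|_{H^1(\tO)}$, keeping careful track of which direction is uniform in the mesh size. Since $\bs{V}_h(\tO)$ is finite-dimensional the equivalence is automatic once both expressions are bona fide norms; the interesting content of the statement is therefore the loss of uniformity in $h$.

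First I would check the norm axioms on $\|\cdot\|_a$. Non-negativity and absolute homogeneity are immediate because $\|\bs{u}_h\|_a^2$ is a finite sum of weighted squared $L^2$-norms on $\tO$ and $\tGD$, and the triangle inequality follows term-by-term from Minkowski's inequality. For definiteness, if $\|\bs{u}_h\|_a=0$ then the first summand in \eqref{eq:def-anorm} forces $\|(2\mu)^{1/2}\bs{u}_h\|_{0,\tO}=0$, hence $\bs{u}_h\equiv \bs{0}$ in $\tO$; the other three terms automatically vanish.

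For the lower bound, which is uniform in $h$, I would simply drop the two boundary contributions and rewrite the volume terms as
\begin{equation*}
\|\bs{u}_h\|_a^2 \ \geq\ 2\mu\,l(\tO)^{-2}\bigl(\|\bs{u}_h\|_{0,\tO}^2 + l(\tO)^2\|\nabla\bs{u}_h\|_{0,\tO}^2\bigr) \ =\ 2\mu\,l(\tO)^{-2}\|\bs{u}_h\|_{1,\tO}^2,
\end{equation*}
so that $\|\bs{u}_h\|_{1,\tO}\leq (2\mu)^{-1/2}\,l(\tO)\,\|\bs{u}_h\|_a$ with a constant that depends only on $\mu$ and $l(\tO)$.

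For the upper bound, the two volume terms of $\|\bs{u}_h\|_a^2$ are controlled directly by $2\mu\,l(\tO)^{-2}\|\bs{u}_h\|_{1,\tO}^2$. For the term $\|(2\mu\|\bs{d}\|)^{1/2}\nabla\bs{u}_h\|_{0,\tGD}^2$ I would invoke Assumption~\ref{ass:d_asym} to bound $\|\bs{d}\|$ by a multiple of $h$ and then the discrete trace inequality of Theorem~\ref{theo:discretetrace} applied to $\nabla\bs{u}_h$ to absorb the factor $h$ against $h^{-1}$, landing on $\|\nabla\bs{u}_h\|_{0,\tO}^2$ with an $h$-independent constant. The remaining term $\|(2\mu h^{-1})^{1/2}\bs{S}_h\bs{u}_h\|_{0,\tGD}^2$ is split via $\bs{S}_h\bs{u}_h = \bs{u}_h + \nabla\bs{u}_h\,\bs{d}$: the $\nabla\bs{u}_h\,\bs{d}$ piece is treated uniformly exactly as above, while the $\bs{u}_h$ piece requires estimating $\|h^{-1/2}\bs{u}_h\|_{0,\tGD}$; the discrete trace inequality gives $\|h^{-1/2}\bs{u}_h\|_{0,\tGD}^2 \lesssim h^{-2}\|\bs{u}_h\|_{0,\tO}^2$, which is bounded by a mesh-dependent multiple of $\|\bs{u}_h\|_{1,\tO}^2$.

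The main obstacle, and simultaneously the reason for the non-uniformity, is precisely this last estimate: a generic finite-element function cannot bound $\|h^{-1/2}\bs{u}_h\|_{0,\tGD}$ by its $H^1$-norm with an $h$-independent constant. Collecting all four estimates therefore yields $\|\bs{u}_h\|_a \leq C(h,\mu,l(\tO))\,\|\bs{u}_h\|_{1,\tO}$ with $C(h,\cdot)\to\infty$ as $h\to 0$, which is exactly the non-uniform equivalence asserted by the theorem.
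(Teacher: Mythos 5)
Your proposal is correct and follows essentially the same route as the paper: drop the boundary terms for the uniform lower bound, and use Assumption~\ref{ass:d_asym} together with trace inequalities to control the boundary terms for the mesh-dependent upper bound. The only (harmless) difference is that for $\| \, h^{-1/2} \bs{u}_h \, \|_{0,\tGD}$ you invoke the discrete trace inequality, giving a constant growing like $h^{-1}$, whereas the paper uses the scaled continuous trace inequality of Theorem~\ref{thm:TraceTheorem} and obtains the sharper $C_2(h)\sim l(\tO)^{-1/2}h^{-1/2}$; either suffices for the non-uniform equivalence claimed.
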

\proof 
To prove equivalence between $\| \, \bs{u}_h \, \|_{1,\tO} $ and $\| \,  \bs{u}_h \, \|_{a}$ means to show that $C_{1} \, \| \, \bs{u}_h \, \|_{1,\tO}  \leq \| \, \bs{u}_h \, \|_{a} \leq C_{2}(h) \, \| \, \bs{u}_h \, \|_{1,\tO} $ for some scalars $C_{1}, C_{2} > 0$.
%
We have
\begin{align}
 \| \, \bs{u}_h \, \|_{1,\tO} 
 &=  
 \| \, \bs{u}_h \, \|_{0,\tO} + l(\tO)\,\| \, \nabla \bs{u}_h \, \|_{0,\tO}
\nonumber \\ 
& =
l(\tO) \, (2\mu)^{-1/2} \, \left( l(\tO)^{-1} \, \| \, (2\mu)^{1/2} \, \bs{u}_h \, \|_{0,\tO} + \| \, (2\mu)^{1/2} \, \nabla \bs{u}_h \, \|_{0,\tO} \right) 
\nonumber \\ 
& \leq
l(\tO) \, (2\mu)^{-1/2} \, \| \, \bs{u}_h \, \|_{a}
 \end{align}
 which gives the first inequality  with $C_{1} = l(\tO)^{-1}\, (2\mu)^{1/2}$. To show the second inequality, we use the discrete trace inequalities of Theorem~\ref{theo:discretetrace} and Theorem~\ref{thm:TraceTheorem} to get
 \begin{align}
\| \, \bs{u}_h \, \|_{a} 
 & = 
 l(\tO)^{-1} \, \| \, (2 \mu)^{1/2} \, \bs{u}_h \, \|_{0,\tO} 
 + \| \, (2 \mu)^{1/2} \, \nabla \bs{u}_h \|_{0,\tO}   
 + \| \, (2 \mu \, h^{-1})^{1/2} \, \bs{S}_{h} \bs{u}_h \,  \|_{0,\tGD}
 \nonumber \\
 & \phantom{\leq}
 + \| \, (2 \mu \, \|\, \bs{d} \, \|)^{1/2} \, \nabla \bs{u}_h \, \|_{0,\tGD}
 \nonumber \\
 & \leq
  l(\tO)^{-1} \, \| \, (2 \mu)^{1/2} \, \bs{u}_h \, \|_{0,\tO} 
 + (1+2 c_{d}^{1/2} \, C_{I} \, \hat{h}^{\zeta/2} ) \, \| \, (2 \mu)^{1/2} \, \nabla \bs{u}_h \|_{0,\tO}   
 + \| \, (2 \mu \, h^{-1})^{1/2} \, \bs{u}_h \,  \|_{0,\tGD}
 \nonumber \\
 & \leq
  (l(\tO)^{-1}+C\,l(\tO)^{-1/2} \,h^{-1/2} ) \, \| \, (2 \mu)^{1/2} \, \bs{u}_h \, \|_{0,\tO} 
 \nonumber \\
 & \phantom{\leq}
  + (1+2 c_{d}^{1/2} \, C_{I} \, \hat{h}^{\zeta/2} +C\,l(\tO)^{1/2} \,h^{-1/2}) \, \| \, (2 \mu)^{1/2} \, \nabla \bs{u}_h \|_{0,\tO}   
  \nonumber \\
 & \leq
 3 \, l(\tO)^{-1/2} \,h^{-1/2} \,(2 \mu)^{1/2}\, \| \, \bs{u}_h \, \|_{1,\tO} \,,
 \end{align}
 which concludes the proof with $C_{2}(h) =  3 \, l(\tO)^{-1/2} \,h^{-1/2} \,(2 \mu)^{1/2}$.
 \endproof

The coercivity property of the form $a$ allows us to prove a uniform inf-sup condition for the form $\mathcal{B}$, thus yielding the LBB-stability of the proposed SBM variational formulation of the Stokes problem.
 We skip the proof, as it does not contain significant differences with respect to that of Theorem 3  in~\cite{atallah2020analysis}, where the interested reader can find detailed derivations.
\begin{thm}[LBB, inf-sup condition]
\label{thm:StokesInfSup}
If the parameter $\alpha$ is sufficiently large and the quantity $c_d \, \hat{h}_{\tGD}^{\zeta}$ is sufficiently small, there exists a constant $\alpha_{LBB} >0$, independent of the mesh size, such that for any pair $[ \bs{u}_h\, , \, p_h] \in \bs{W}_h(\tO)$ one can find a pair $[ \bs{w}_h\, , \, q_h] \in \bs{W}_h(\tO)$ satisfying
\begin{align}\label{eq:StokesInfSup}
\mathcal{B}([\bs{u}_h\, , \, p_h]; [ \bs{w}_h \, , \, q_h] ) 
\geq 
\alpha_{LBB} \, \| \, [ \bs{u}_h \, , \, p_h] \, \|_{\cal B} \, \| \, [ \bs{w}_h \, , \, q_h] \, \|_{\cal B} \; ,
\end{align}
where
\begin{align}
\| \, [ \bs{u}_h\, , \, p_h] \, \|_{\cal B}^2  =  \| \,\bs{u}_h \, \|_{a}^2  +  \|\,  (2 \mu)^{-1/2} \, p_h \,  \|_{0,\tO}^2  + \| \,  (2 \mu)^{-1/2} \, h \, \nabla p_h \, \|^{2}_{0,\tO}
\; .
\end{align}
\end{thm}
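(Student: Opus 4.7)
My plan is to prove the uniform inf-sup condition via the classical Fortin-type two-step argument, constructing $[\bs{w}_h, q_h]$ as a linear combination $[\bs{u}_h + \delta \bs{w}_p, p_h]$ in which $\bs{w}_p$ is a discrete approximation of an auxiliary Bogovskii-type velocity used to recover $L^2$-control on the pressure, and $\delta > 0$ is a small parameter tuned at the end. This is essentially the argument used in Theorem 3 of~\cite{atallah2020analysis}, with simplifications enabled by the new bilinear form.

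First, I would substitute $[\bs{w}_h, q_h] = [\bs{u}_h, p_h]$ in \eqref{eq:Bform}: the two $b(p_h, \bs{u}_h)$ contributions cancel, leaving $\mathcal{B}([\bs{u}_h, p_h]; [\bs{u}_h, p_h]) = a(\bs{u}_h, \bs{u}_h) - \bar{b}(\bs{u}_h, p_h) + c(p_h, p_h)$. Theorem~\ref{thm:coerc-a} controls $a(\bs{u}_h, \bs{u}_h)$ below by $C_a \|\bs{u}_h\|_a^2$, while $c(p_h, p_h) = \gamma \|(2\mu)^{-1/2} h \nabla p_h\|_{0,\tO}^2$. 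The cross term $\bar{b}(\bs{u}_h, p_h)$ decomposes into a surrogate-boundary piece $\avg{(\nabla \bs{u}_h \, \bs{d}) \cdot \ti{\bs{n}}, p_h}_{\tGD}$, which by Cauchy--Schwarz, Assumption~\ref{ass:d_asym} and the discrete trace inequalities of Theorem~\ref{theo:discretetrace} is bounded by $c_d \, C_I^2 \, \hat{h}_{\tGD}^{\zeta} \|(2\mu)^{1/2} \nabla \bs{u}_h\|_{0,\tO} \|(2\mu)^{-1/2} p_h\|_{0,\tO}$, and a GLS-type volume piece, which an element-wise inverse inequality applied to $\nabla \cdot (2\mu \, \bs{\varepsilon}(\bs{u}_h))$ bounds by a constant times $\|\bs{u}_h\|_a \, \|(2\mu)^{-1/2} h \nabla p_h\|_{0,\tO}$. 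Young's $\epsilon$-inequality on both pieces, with sufficiently small $c_d \hat{h}_{\tGD}^{\zeta}$ and moderate $\gamma$, yields
\begin{equation*}
\mathcal{B}([\bs{u}_h, p_h]; [\bs{u}_h, p_h]) \geq C_1 \|\bs{u}_h\|_a^2 + C_1 \gamma \|(2\mu)^{-1/2} h \nabla p_h\|_{0,\tO}^2 - \eta \|(2\mu)^{-1/2} p_h\|_{0,\tO}^2,
\end{equation*}
where $\eta$ can be made as small as needed.

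Next, I would invoke the continuous divergence surjectivity on $\tO$ (with $p_h \in L^2_0(\tO)$ in the pure-Dirichlet case, as enforced by the definition of $Q_h(\tO)$) to select $\bs{v}_p \in H^1_0(\tO)^{n_d}$ with $\nabla \cdot \bs{v}_p = -(2\mu)^{-1} p_h$ and $\|\nabla \bs{v}_p\|_{0,\tO} \leq \beta_0 (2\mu)^{-1} \|p_h\|_{0,\tO}$, set $\bs{w}_p = \Pi_h \bs{v}_p$ (Scott--Zhang interpolant preserving the homogeneous trace on $\tG$, so $\bs{w}_p = 0$ on $\tGD$), and test against $[\bs{w}_p, 0]$. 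The boundary piece of $b(p_h, \bs{w}_p)$ vanishes; integration by parts of the volume contribution together with $\|\bs{v}_p - \bs{w}_p\|_{0,\tO} \leq C h \|\nabla \bs{v}_p\|_{0,\tO}$, the vector analogue of Proposition~\ref{thm:PoissonContinuity} for $a(\bs{u}_h, \bs{w}_p)$, and the bound $\|\bs{w}_p\|_a \leq C \|(2\mu)^{-1/2} p_h\|_{0,\tO}$ (from Scott--Zhang stability, Korn's inequality and the definition of $\|\cdot\|_a$), followed by Young's inequality, gives
\begin{equation*}
\mathcal{B}([\bs{u}_h, p_h]; [\bs{w}_p, 0]) \geq C_2 \|(2\mu)^{-1/2} p_h\|_{0,\tO}^2 - C_3 \|\bs{u}_h\|_a^2 - C_4 \|(2\mu)^{-1/2} h \nabla p_h\|_{0,\tO}^2.
\end{equation*}
Taking $[\bs{w}_h, q_h] = [\bs{u}_h + \delta \bs{w}_p, p_h]$ with $\delta$ small enough that $\delta C_3 \leq C_1/2$, $\delta C_4 \leq C_1 \gamma/2$ and $\eta \leq \delta C_2/2$, the sum of the two previous displays bounds $\mathcal{B}([\bs{u}_h, p_h]; [\bs{w}_h, q_h])$ from below by $C_5 \|[\bs{u}_h, p_h]\|_{\cal B}^2$; the triangle inequality and $\|\bs{w}_p\|_a \leq C \|(2\mu)^{-1/2} p_h\|_{0,\tO}$ give $\|[\bs{w}_h, q_h]\|_{\cal B} \leq C_6 \|[\bs{u}_h, p_h]\|_{\cal B}$, hence \eqref{eq:StokesInfSup} follows with $\alpha_{LBB} = C_5/C_6$ independent of $h$.

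The main obstacle is the SBM-specific piece $\avg{(\nabla \bs{u}_h \, \bs{d}) \cdot \ti{\bs{n}}, p_h}_{\tGD}$ in $\bar{b}(\bs{u}_h, p_h)$, which is absent from body-fitted Nitsche discretizations and in the earlier formulation of~\cite{atallah2020analysis} was offset by the tangential-gradient and incompressibility stabilization terms that have been dropped here. The crucial observation is that its size is reduced by the factor $\|\bs{d}\|/h \leq c_d \hat{h}_{\tGD}^{\zeta}$ from Assumption~\ref{ass:d_asym}, so for sufficiently fine grids it is dominated by the positive contributions and is absorbed by a single application of Young's inequality, producing the $h$-independent inf-sup constant. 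A secondary subtlety is the pure-Dirichlet case, where the Bogovskii right inverse of the divergence exists only on pressures with zero mean, as ensured by the definition of $Q_h(\tO)$.
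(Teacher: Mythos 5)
Your proposal is correct and follows essentially the same route as the paper, which omits the proof and instead defers to Theorem 3 of the cited Stokes SBM analysis: coercivity of $a(\cdot,\cdot)$ from Theorem~\ref{thm:coerc-a}, absorption of the residual SBM boundary term in $\bar{b}$ via Assumption~\ref{ass:d_asym}, recovery of the pressure $L^2$-norm through an interpolated right inverse of the divergence, and a linear combination with a small parameter $\delta$. The only point worth stating explicitly is that the constant $\beta_0$ in the divergence surjectivity must be uniform over the family of surrogate domains $\tO$, which holds under the shape-regularity hypotheses already in force and is likewise relied upon in the cited reference.
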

Next, we focus on the convergence of the SBM discretization in a natural norm. Precisely, we set 
$$
\bs{W}(\tO;\tT)= \bs{V}(\tO;\tT) \times Q(\tO;\tT)\;,
$$ 
with
	\begin{subequations}
	\begin{align}
	\bs{V}(\tO;\tT) & =  \bs{V}_h(\tO) + H^2(\tO)^{n_d}\; , \\
	Q(\tO;\tT)   & = \;  Q_h(\tO) + Q(\tO)\;  ,
	\end{align}
where $Q(\tO) = H^1(\tO)$ if $\G_{N} \neq \emptyset$ or $Q(\tO) = H^1(\tO) \cap L^2_0(\tO)$ if $\G_{N} = \emptyset$ . We equip $\bs{W}(\tO;\tT)$ with the norm
	\begin{align}
	\| \, [ \bs{v} \, , \, q ] \, \|^2_{\bs{W}(\tO;\tT)} & = \| \, [ \bs{v}\, , \, q] \, \|^2_{\mathcal{B}} +  
	\| \, (2 \mu)^{1/2} \,  h \, \nabla  \bs{\varepsilon}(\bs{v}) \, \|^2_{0,\tO;\ti{\mathcal{T}}_h}  
	\; .
	\end{align}
	\end{subequations}
Note that if $[ \bs{v}_h \, , \, q_h ]  \in \bs{W}_h(\tO)$, then $\| \, [ \bs{v}_h \, , \, q_h ] \, \|_{\bs{W}(\tO;\tT)} = \| \, [ \bs{v}_h \, , \, q_h ] \, \|_{\mathcal{B}}$. Also note that
$\bs{V}(\tO;\tT) \subset H^2(\tO;\ti{\mathcal{T}}_h)^{n_d}$, where the latter space is the subset of $H^1(\tO)^{n_d}$ of the functions with broken $H^2$-regularity on the triangulation $\ti{\mathcal{T}}_h$.\\
The analysis of the consistency error uses the following identity.
\begin{lemma}[Consistency error]
	\label{lem:SBMorthogonality}
Let the exact solution of the Stokes problem \eqref{eq:SteadyStokes} satisfy $[ \bs{u}\,, \, p] \in H^2(\Om)^{n_d} \times H^1(\Om)$, with $p$ chosen to satisfy $p_{|\tO}\in L^2_0(\tO)$  if $\G_N = \emptyset$.
For any $[\bs{v}_h\, , \, \omega_h] \in \bs{W}_h(\tO) $, it holds that
\begin{align}
\mathcal{B}( [ \bs{u} - \bs{u}_h \, , \,  p - p_h ]; [\bs{v}_h\, , \,  \omega_h] ) 
&=
\avg{ \bs{R}_{h} \bs{u} \otimes \ti{\bs{n}}  \, , \, 2 \mu \, \bs{\varepsilon}(\bs{v}_h) + \omega_h \bs{I} }_{\tGD}
- \alpha \, \avg{ 2 \mu \, h^{-1}   \, \bs{R}_{h} \bs{u} \, , \, \bs{S}_{h} \bs{v}_h \,   }_{\tGD} \; .
\end{align}
\end{lemma}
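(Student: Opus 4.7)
My plan is to proceed by the standard ``insert the exact solution into the discrete form and integrate by parts'' strategy, and then to identify the boundary residual as the Taylor remainder. Specifically, by bilinearity and the fact that $[\bs{u}_h,p_h]$ solves \eqref{eq:SBMequations}, I would write
\begin{align*}
\mathcal{B}([\bs{u}-\bs{u}_h, p-p_h];[\bs{v}_h,\omega_h])
=\mathcal{B}([\bs{u},p];[\bs{v}_h,\omega_h]) - \mathcal{L}([\bs{v}_h,\omega_h]),
\end{align*}
and the entire task reduces to evaluating the left-hand side when the exact $[\bs{u},p]$ is plugged in.

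The first main computational step is to apply Green's identity on $\tO$ to the volumetric terms of $a(\bs{u},\bs{v}_h)+b(p,\bs{v}_h)$. Using the regularity assumed in the statement, $(2\mu\,\bs{\varepsilon}(\bs{u}),\bs{\varepsilon}(\bs{v}_h))_{\tO} - (p,\nabla\cdot\bs{v}_h)_{\tO}$ produces the interior residual $(-\nabla\cdot(2\mu\,\bs{\varepsilon}(\bs{u})-p\bs{I}),\bs{v}_h)_{\tO} = (\bs{f},\bs{v}_h)_{\tO}$ via the momentum equation, plus a full boundary trace on $\partial\tO$. Splitting that trace along $\partial\tO=\tGD\cup\tGN$ and invoking Assumption~\ref{ass:NeumannAssumption} together with \eqref{eq:tractionB} converts the $\G_N$-contribution into $\langle \bs{t}_N,\bs{v}_h\rangle_{\G_N}$, while on $\tGD$ the $\avg{2\mu\,\bs{\varepsilon}(\bs{u})\tn,\bs{v}_h}_{\tGD}$ and $\avg{p,\bs{v}_h\cdot\tn}_{\tGD}$ traces exactly cancel the consistency-style terms hard-coded in the definitions \eqref{eq:aBilinearStokes} and \eqref{eq:bBilinearStokes}.

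Next I would handle the pressure equation block $-b(\omega_h,\bs{u})-\bar{b}(\bs{u},\omega_h)+c(p,\omega_h)$. The $-b(\omega_h,\bs{u})$ volumetric piece vanishes by incompressibility $\nabla\cdot\bs{u}=0$, leaving only the boundary term $-\avg{\omega_h,\bs{u}\cdot\tn}_{\tGD}$. Adding the $-\avg{(\nabla\bs{u}\,\bs{d})\cdot\tn,\omega_h}_{\tGD}$ contribution from $\bar{b}$ produces precisely $-\avg{\bs{S}_h\bs{u}\otimes\tn,\omega_h\bs{I}}_{\tGD}$. For the element-wise stabilization terms in $\bar{b}$ and $c$, I combine them into $\gamma\sum_T\bigl(h^2(2\mu)^{-1}(-\nabla\cdot(2\mu\,\bs{\varepsilon}(\bs{u}))+\nabla p),\nabla\omega_h\bigr)_T$, which, again by the strong momentum equation, collapses to $\gamma(h^2(2\mu)^{-1}\bs{f},\nabla\omega_h)_{\tO}$ --- exactly matching the $l_g$ stabilization contribution.

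Subtracting $\mathcal{L}$ from what remains, the body-force and Neumann-traction pieces cancel identically, and the only survivors are on $\tGD$:
\begin{align*}
-\avg{(\bs{S}_h\bs{u}-\bar{\bs{u}}_D)\otimes\tn,\,2\mu\,\bs{\varepsilon}(\bs{v}_h)+\omega_h\bs{I}}_{\tGD}
+\alpha\avg{2\mu\,h^{-1}(\bs{S}_h\bs{u}-\bar{\bs{u}}_D),\,\bs{S}_h\bs{v}_h}_{\tGD}.
\end{align*}
Finally, I substitute $\bs{S}_h\bs{u}-\bar{\bs{u}}_D=-\bs{R}_h\bs{u}$, which is the vector analog of \eqref{eq:u-g} stated in \eqref{eq:bsu-g}, to obtain the claimed identity. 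The main obstacle is bookkeeping: careful tracking of which boundary traces are cancelled by the consistency terms in the Nitsche form and which combine to reconstruct the shifted operator $\bs{S}_h$, especially the nontrivial pairing of $-b(\omega_h,\bs{u})$ with the first term of $\bar{b}(\bs{u},\omega_h)$, and the matching of the two element-wise stabilization pieces through the strong momentum balance.
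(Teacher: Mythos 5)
Your proposal is correct and follows the same route as the paper's proof: rewrite $\mathcal{B}([\bs{u}-\bs{u}_h,p-p_h];\cdot)$ as $\mathcal{B}([\bs{u},p];\cdot)-\mathcal{L}(\cdot)$, integrate the volumetric terms by parts to invoke the strong equations, and identify $\bs{S}_{h}\bs{u}-\bar{\bs{u}}_D=-\bs{R}_h\bs{u}$ via \eqref{eq:bsu-g}. You merely spell out the bookkeeping (Neumann cancellation, the pairing of $-b(\omega_h,\bs{u})$ with $\bar{b}$, and the stabilization terms) that the paper leaves implicit.
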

\begin{proof}
From \eqref{eq:SBMequations}, we get
\begin{align}
\mathcal{B}( [ \bs{u} - \bs{u}_h \, , \,  p - p_h ]; [\bs{v}_h\, , \,  \omega_h] ) 
& = \;
\mathcal{B}( [ \bs{u}  \, , \,  p  ]; [\bs{v}_h\, , \,  \omega_h] ) 
-\mathcal{B}( [  \bs{u}_h \, , \,  p_h ]; [\bs{v}_h\, , \,  \omega_h] ) 
\nonumber \\ 
& = \;
\mathcal{B}( [ \bs{u}  \, , \,  p  ]; [\bs{v}_h\, , \,  \omega_h] ) 
- \mathcal{L}( [\bs{v}_h\, , \, \omega_h] ) \; .
\end{align}
Integrating by parts $(  2 \mu \, \bs{\varepsilon}(\bs{u}) \, , \,  \bs{\varepsilon}( \bs{v}_h) )_{\tO}$ and $(  p \, , \, \nabla \cdot \bs{v}_h  )_{\tO}$ in $\mathcal{B}( [ \bs{u} \, , \,  p ]; [\bs{v}_h\, , \,  \bs{\omega}_h] )$ and recalling~\eqref{eq:bsu-g}, we obtain
\begin{align}
\mathcal{B}( [ \bs{u} - \bs{u}_h \, , \,  p - p_h ]; [\bs{v}_h\, , \,  \omega_h] ) 
& =  \;
- \avg{ ( \bs{S}_{h} \bs{u} - \bar{\bs{u}}_D) \otimes \ti{\bs{n}}  \, , \, 2 \mu \, \bs{\varepsilon}(\bs{v}_h) + \omega_h \bs{I} }_{\tGD}
\nonumber \\
& \phantom{=} \; \; \;
+\alpha \, \avg{ 2 \mu \, h^{-1}   \, (\bs{S}_{h} \bs{u} - \bar{\bs{u}}_D) \, , \, \bs{S}_{h} \bs{v}_h \,   }_{\tGD} 
\nonumber \\
& = \; 
 \avg{ \bs{R}_{h} \bs{u} \otimes \ti{\bs{n}}  \, , \, 2 \mu \, \bs{\varepsilon}(\bs{v}_h) + \omega_h \bs{I} }_{\tGD}
- \alpha \, \avg{ 2 \mu \, h^{-1}   \, \bs{R}_{h} \bs{u} \, , \, \bs{S}_{h} \bs{v}_h \,   }_{\tGD}  \; ,
\end{align}
which concludes the proof.
\end{proof}

Convergence in the norm ${\bs{W}(\tO;\tT)}$ is established in the following theorem.

\begin{thm}[Convergence in the natural norm]
	\label{thm:StokesConvergenceNatural}
	Suppose that  $\G_D$ is of class $\mathcal{C}^{2}$, 
	and that the exact solution of the Stokes problem \eqref{eq:SteadyStokes} satisfies $[ \bs{u}\,, \, p] \in (H^2(\Om))^{n_d} \times H^1(\Om)$; in addition, if $\Gamma_N=\emptyset$, choose $p$ satisfying $p_{|\tO}\in L^2_0(\tO)$.
	Suppose also that Assumption~\ref{ass:d_asym}, Assumption~\ref{ass:NeumannAssumption} and the hypotheses of Theorem~\ref{thm:StokesInfSup} hold. 
	Then, the SBM numerical solution $[\bs{u}_h\, , \, p_h]$ of \eqref{eq:SBMequations} satisfies the following error estimate:
	\begin{align}\label{eq:convergenceNN}
	\| \,[\bs{u} - \bs{u}_h \, , \, p -p_h ]  \, \|_{\bs{W}(\tO;\tT)}  
	&\leq\;  
	C\, h_{\tO} \left( \| \, \nabla (\nabla \bs{u}) \, \|_{0, \Omega} + \| \, \nabla p \, \|_{0,\tO} \right)
	\; ,
	\end{align}
	where $C>0$ is a constant independent of the mesh size and the solution. 	%
\end{thm}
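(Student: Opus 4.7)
The plan is to follow the template of Strang's Second Lemma adapted to the inf-sup setting, mirroring the arguments used for the Poisson problem in Section~\ref{sec:sbm_poisson}. First I would introduce an interpolant $[\bs{I}_h \bs{u},\Pi_h p]\in\bs{W}_h(\tO)$ (standard Lagrange interpolation for the velocity and an $L^2$-projection or quasi-interpolant for the pressure, corrected to have zero mean on $\tO$ when $\G_N=\emptyset$) and use the triangle inequality:
\begin{align*}
\|\,[\bs{u}-\bs{u}_h,p-p_h]\,\|_{\bs{W}(\tO;\tT)}\leq \|\,[\bs{u}-\bs{I}_h\bs{u},p-\Pi_h p]\,\|_{\bs{W}(\tO;\tT)}+\|\,[\bs{I}_h\bs{u}-\bs{u}_h,\Pi_h p-p_h]\,\|_{\mathcal{B}}.
\end{align*}
The approximation part is handled by standard Bramble--Hilbert estimates on shape-regular meshes, combined with the discrete and continuous trace inequalities (Theorems~\ref{theo:discretetrace}, \ref{thm:TraceTheorem}, \ref{theo:BoundaryTraceIneq}) and Assumption~\ref{ass:d_asym} to convert surface norms on $\tGD$ into bulk norms. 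In particular, bounding $\|h^{-1/2}\bs{S}_h(\bs{u}-\bs{I}_h\bs{u})\|_{0,\tGD}$ and $\|\|\bs{d}\|^{1/2}\nabla(\bs{u}-\bs{I}_h\bs{u})\|_{0,\tGD}$ proceeds exactly as in Proposition~\ref{thm:PoissonApproximation}, yielding a bound by $C\,h_{\tO}\bigl(\|\nabla(\nabla\bs{u})\|_{0,\tO}+\|\nabla p\|_{0,\tO}\bigr)$.

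For the discrete-error part, write $[\bs{\theta}_h,\pi_h]:=[\bs{I}_h\bs{u}-\bs{u}_h,\Pi_h p-p_h]\in\bs{W}_h(\tO)$ and invoke the inf-sup Theorem~\ref{thm:StokesInfSup} to obtain a test pair $[\bs{w}_h,q_h]\in\bs{W}_h(\tO)$ such that $\alpha_{LBB}\|[\bs{\theta}_h,\pi_h]\|_{\mathcal{B}}\|[\bs{w}_h,q_h]\|_{\mathcal{B}}\leq\mathcal{B}([\bs{\theta}_h,\pi_h];[\bs{w}_h,q_h])$. Adding and subtracting the exact pair $[\bs{u},p]$ in the first slot gives
\begin{align*}
\mathcal{B}([\bs{\theta}_h,\pi_h];[\bs{w}_h,q_h])=\mathcal{B}([\bs{I}_h\bs{u}-\bs{u},\Pi_h p-p];[\bs{w}_h,q_h])+\mathcal{B}([\bs{u}-\bs{u}_h,p-p_h];[\bs{w}_h,q_h]).
\end{align*}
The first term is controlled by a continuity estimate for $\mathcal{B}$ on $\bs{W}(\tO;\tT)\times\bs{W}_h(\tO)$ (obtained by a term-by-term Cauchy--Schwarz argument along the lines of Proposition~\ref{thm:PoissonContinuity}, with $\|h^{1/2}\nabla\bs{v}\cdot\bs{d}\|_{0,\tGD}$ handled via Assumption~\ref{ass:d_asym}), then by the interpolation estimate. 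The second is the consistency error, which by Lemma~\ref{lem:SBMorthogonality} reduces to boundary pairings of the Taylor remainder $\bs{R}_h\bs{u}$ against $2\mu\bs{\varepsilon}(\bs{w}_h)+\omega_h\bs{I}$ and $\bs{S}_h\bs{w}_h$.

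The main obstacle will be the consistency step, which requires a vector-valued analog of Lemma~\ref{lem:errorDir}: under $\G_D\in\mathcal{C}^2$ and Assumption~\ref{ass:d_asym}, a careful bound on the first-order Taylor remainder gives $\|h^{-1/2}\bs{R}_h\bs{u}\|_{0,\tGD}\leq C\,h_{\tGD}\,|\nabla(\nabla\bs{u})|_{0,\Om\setminus\tO}$. Once this is in hand, applying Cauchy--Schwarz and the discrete trace inequalities of Theorem~\ref{theo:discretetrace} to transfer $\|h^{1/2}\bs{\varepsilon}(\bs{w}_h)\|_{0,\tGD}$, $\|h^{1/2}q_h\|_{0,\tGD}$, and $\|h^{-1/2}\bs{S}_h\bs{w}_h\|_{0,\tGD}$ into components of $\|[\bs{w}_h,q_h]\|_{\mathcal{B}}$ yields a consistency bound of order $h_{\tO}\|\nabla(\nabla\bs{u})\|_{0,\Om}\,\|[\bs{w}_h,q_h]\|_{\mathcal{B}}$. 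Dividing through by $\|[\bs{w}_h,q_h]\|_{\mathcal{B}}$, using that $\|[\bs{\theta}_h,\pi_h]\|_{\bs{W}(\tO;\tT)}=\|[\bs{\theta}_h,\pi_h]\|_{\mathcal{B}}$ on discrete functions, and combining with the triangle inequality and the approximation bound, will deliver the asserted estimate \eqref{eq:convergenceNN}.
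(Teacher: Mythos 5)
Your proposal is correct and follows essentially the same route as the paper, which itself only sketches the argument: a generalized Strang decomposition using the inf-sup condition of Theorem~\ref{thm:StokesInfSup} in place of coercivity, the consistency identity of Lemma~\ref{lem:SBMorthogonality}, and the Taylor-remainder bound of Lemma~\ref{lem:errorDir} applied componentwise to the velocity (the technical details being deferred to Theorem 4 of~\cite{atallah2020analysis}). The only substantive step you flag as an "obstacle" --- the vector-valued analog of Lemma~\ref{lem:errorDir} --- is exactly what the paper invokes, so nothing is missing.
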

\begin{proof}
The proof relies on Strang's Lemma and the analysis of the consistency errors, as done in Sect. \ref{sec:ShiftedNitscheAccuracyAnalysis} for the Poisson problem; in particular, one uses Lemma \ref{lem:SBMorthogonality} and the estimate of Lemma \ref{lem:errorDir}, applied to each component of the velocity. We refer to the similar proof of Theorem 4 in~\cite{atallah2020analysis} for the technical details.
\end{proof}
\begin{rem}
Should the exact solution have a lower regularity than the one assumed in the Theorem (due to the presence of corners or edges, or of mixed Dirichlet and Neumann boundary conditions), the exponent of $h_{\tO}$ in \eqref{eq:convergenceNN} would be $<1$. We refer again to Lemma 3 in~\cite{TheoreticalPoissonAtallahCanutoScovazzi2020} for the necessary changes.
\end{rem}
%
 
\medskip
Finally, we propose an enhanced $L^{2}$ estimate for the velocity error that considerably improves over the one presented in~\cite{atallah2020analysis} in that we {\it do not} rely on the restrictive and unlikely assumption that the surrogate domain $\tO$ is convex. 

For simplicity, hereafter we assume $\Gamma_N=\emptyset$, although extensions are feasible at the cost of an increased technical burden.

%
\begin{thm}[Enhanced $L^{2}$-error estimate for the velocity $\bs{u}_h$]
\label{thm:StokesConvergenceL2}
Assume the hypotheses of Theorem~\ref{thm:StokesConvergenceNatural} hold, and in addition let $\Gamma_N=\emptyset$. Then, the numerical velocity $\bs{u}_h$ produced by SBM satisfies the following error estimate:
\begin{align}\label{eq:L2estimate}
\| \, \bs{u}-\bs{u}_h \, \|_{0,\tO}  
& \leq \;
C  \, h_{\tO}^{3/2} \, l(\tO)^{1/2} \, \mu^{-1/2} \, \left(\| \, \nabla (\nabla \bs{u}) \, \|_{0, \Om}  + \|\, \nabla p \, \|_{0,\tO} \right)
\; .
\end{align}
where $C$ is a positive constant independent of the mesh size and the solution.
\end{thm}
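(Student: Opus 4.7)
The plan is to imitate the duality (Aubin--Nitsche) argument used in the proof of the enhanced $L^2$-estimate for the Poisson problem, adapted to the mixed Stokes setting. Given $\bs{z}\in L^2(\tO)^{n_d}$ arbitrary, I extend it by zero to $\bar{\bs{z}}\in L^2(\Om)^{n_d}$ and introduce the dual Stokes problem posed on the \emph{true} domain $\Om$: find $(\bs{\psi},\pi)\in H^1_0(\Om)^{n_d}\times L^2_0(\Om)$ solving $-\nabla\cdot(2\mu\,\bs{\varepsilon}(\bs{\psi})-\pi\bs{I})=\bar{\bs{z}}$ and $\nabla\cdot\bs{\psi}=0$ in $\Om$. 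Because $\G\in\mathcal{C}^2$, Theorem~\ref{thm:StokesExistenceUniqueness} yields $(\bs{\psi},\pi)\in H^2(\Om)^{n_d}\times H^1(\Om)$ with $\|\bs{\psi}\|_{2,\Om}+\|\pi\|_{1,\Om}\le Q\,\|\bs{z}\|_{0,\tO}$. Exactly as in \eqref{eq:bsu-g}, the shifted trace of $\bs{\psi}$ on $\tGD$ satisfies $\bs{S}_h\bs{\psi}+\bs{R}_h\bs{\psi}=\bs{0}$, and a componentwise application of Lemma~\ref{lem:errorDir} together with the regularity bound gives $\|h^{-1/2}\bs{R}_h\bs{\psi}\|_{0,\tGD}\le C\,h_{\tO}\,\|\bs{z}\|_{0,\tO}$.

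Next I derive a representation of $\|\bs{e}_u\|_{0,\tO}^2$ with $\bs{e}_u:=\bs{u}-\bs{u}_h$. Testing the dual momentum and divergence equations against $\bs{e}_u$ and $-e_p:=-(p-p_h)$, integrating by parts, and then adding boundary-consistent terms on $\tGD$ (which vanish since $\bs{S}_h\bs{\psi}=-\bs{R}_h\bs{\psi}$) produces
\begin{equation*}
\|\bs{e}_u\|_{0,\tO}^2=\mathcal{B}([\bs{\psi},\pi];[\bs{e}_u,-e_p])+\avg{\bs{R}_h\bs{\psi}\otimes\tn,\,2\mu\,\bs{\varepsilon}(\bs{e}_u)-e_p\bs{I}}_{\tGD}-\alpha\avg{2\mu h^{-1}\bs{R}_h\bs{\psi},\bs{S}_h\bs{e}_u}_{\tGD}.
\end{equation*}
I then invoke a non-symmetry identity analogous to Lemma~\ref{lemma:non-symmetry} for the form $\mathcal{B}$, which identifies the asymmetric contributions as arising from the $\bar{b}$ term in~\eqref{eq:bBarBilinearStokes} (the piece $\avg{(\nabla\bs{u}_h\,\bs{d})\cdot\tn,q_h}_{\tGD}$ and the $\gamma$-stabilization pairing), together with the Nitsche coupling involving $\bs{d}$. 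Swapping arguments and then using a Stokes analog of Lemma~\ref{lemma:non-Gal-orth}---which is exactly Lemma~\ref{lem:SBMorthogonality}---I insert the Lagrange interpolants $\bs{\psi}_I:=\mathcal{I}_h\bs{\psi}\in\bs{V}_h(\tO)$ and $\pi_I:=\mathcal{I}_h\pi\in Q_h(\tO)$ (adjusting $\pi_I$ by a constant to preserve zero mean when $\G_N=\emptyset$) to reduce the bilinear piece to $\mathcal{B}([\bs{e}_u,-e_p];[\bs{\psi}-\bs{\psi}_I,\pi-\pi_I])$ plus a Galerkin-defect term involving $\bs{R}_h\bs{u}$ tested against $(\bs{\psi}_I,\pi_I)$.

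I then bound the four resulting error groups. (i) The bilinear piece is handled by boundedness of $\mathcal{B}$ on $\bs{W}(\tO;\tT)\times\bs{W}(\tO;\tT)$ combined with standard interpolation, giving $C\,h_{\tO}\,\|\bs{z}\|_{0,\tO}\,\|[\bs{e}_u,e_p]\|_{\bs{W}(\tO;\tT)}$. (ii) The symmetry defect is controlled using Assumption~\ref{ass:d_asym} (to introduce the factor $\hat{h}_{\tGD}^{\zeta}$) together with the discrete trace inequality and the trace lifting from $H^2$, producing the bound $C\,h_{\tO}^{1/2}\,l(\tO)^{1/2}\,\|\bs{z}\|_{0,\tO}\,\|[\bs{e}_u,e_p]\|_{\bs{W}(\tO;\tT)}$. (iii) The dual-remainder boundary terms use the sharp estimate on $\|h^{-1/2}\bs{R}_h\bs{\psi}\|_{0,\tGD}$ and yield $C\,h_{\tO}\,\|\bs{z}\|_{0,\tO}\,\|[\bs{e}_u,e_p]\|_{\bs{W}(\tO;\tT)}$. (iv) The Galerkin-orthogonality defect is bounded componentwise through Lemma~\ref{lem:errorDir} combined with trace and interpolation estimates for $(\bs{\psi},\pi)$, yielding $C\,h_{\tO}^{3/2}\,l(\tO)^{1/2}\,(\|\nabla(\nabla\bs{u})\|_{0,\Om}+\|\nabla p\|_{0,\tO})\,\|\bs{z}\|_{0,\tO}$. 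Choosing $\bs{z}=\bs{e}_u$, dividing by $\|\bs{e}_u\|_{0,\tO}$, and finally invoking Theorem~\ref{thm:StokesConvergenceNatural} to estimate $\|[\bs{e}_u,e_p]\|_{\bs{W}(\tO;\tT)}$ by $C\,h_{\tO}\,(\|\nabla(\nabla\bs{u})\|_{0,\Om}+\|\nabla p\|_{0,\tO})$ delivers \eqref{eq:L2estimate} after dividing through by $\mu^{1/2}$ to strip the viscosity weight out of the $\bs{W}$-norm.

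The main obstacle is the careful bookkeeping of the symmetry defect in step (ii): the non-symmetric contributions to $\mathcal{B}$ are spread across $\bar{b}$, the asymmetric Nitsche treatment of the viscous term, and the pressure-boundary pairing, and it is precisely this composite term that controls the leading order and forces the suboptimal exponent $h_{\tO}^{3/2}$ rather than $h_{\tO}^{2}$, exactly mirroring the analysis of $E_{sym}$ in the Poisson proof. A secondary technicality---less deep but easy to mishandle---is the pressure-mean normalization when $\G_N=\emptyset$: the dual pressure $\pi$ must live in $L^2_0(\Om)$, and $\pi_I$ must be corrected by a constant so that $\pi_I\in Q_h(\tO)$, so that the Galerkin-defect identity of Lemma~\ref{lem:SBMorthogonality} is applicable.
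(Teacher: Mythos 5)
Your proposal follows essentially the same route as the paper: a duality argument with the dual Stokes problem posed on the true domain $\Om$, elliptic regularity, a componentwise application of Lemma~\ref{lem:errorDir} to bound $\|\,h^{-1/2}\bs{R}_h\bs{\psi}\,\|_{0,\tGD}$, a decomposition of $\|\bs{e}_{\bs{u}}\|_{0,\tO}^2$ into a bilinear piece, a symmetry defect, a stabilization/remainder defect and a Galerkin-orthogonality defect, with the symmetry defect supplying the dominant $h_{\tO}^{1/2}\,l(\tO)^{1/2}$ contribution that, combined with Theorem~\ref{thm:StokesConvergenceNatural}, yields the $h_{\tO}^{3/2}$ rate. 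The only point to correct is the choice of interpolant for the dual pressure: since $\lambda$ (your $\pi$) is only in $H^1(\Om)$, which does not embed into $C^0$ for $n_d\ge 2$, the nodal Lagrange interpolant $\mathcal{I}_h\pi$ is not well defined; the paper uses the Scott--Zhang interpolant for the pair $[\bs{\psi},\lambda]$ precisely for this reason, and your argument goes through unchanged once you make that substitution.
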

\begin{proof}
Given $\bs{z} \in L^2(\tO)^{n_d}$, let $\bar{\bs{z}} \in L^2(\Om)^{n_d}$ be its extension by $\bs{0}$ outside $\tO$ and let $[\bs{\psi} \, , \, \lambda] $ be the solution of the following homogeneous Dirichlet problem in $\Om$:
\begin{subequations}
\label{eq:SteadyStokes_dual}
\begin{align}
- \nabla \cdot ( 2 \mu \, \bs{\varepsilon}(\bs{\psi}) + \lambda \bs{I}  ) 
&=\; \mu \, \bar{\bs{z}} 
\qquad \! \text{in \ } \Om \; , \\
- \nabla \cdot  \bs{\psi}  &=\; 0
\qquad \text{\ \ in \ } \Om \; , \\
\bs{\psi} &=\; \bs{0}
\qquad \text{\ \ on \ }  \G \; .
\end{align}
\end{subequations}
The stated assumptions in addition to the fact that $\bar{\bs{z}} \in L^2(\Om)^{n_d}$ imply the regularity result $[\bs{\psi} \, , \, \lambda] \in H^2(\Om)^{n_d} \times H^1(\Omega)$, with the following bound
\begin{align}
\label{eq:dual_reg_arg}
 \| \, \mu^{1/2} \, \bs{\psi} \, \|_{2,\tO} 
+ \| \, \mu^{-1/2} \, \lambda \, \|_{1,\tO} 
\leq \;
 \| \, \mu^{1/2} \, \bs{\psi} \, \|_{2,\Om} 
+  \| \, \mu^{-1/2} \, \lambda \, \|_{1,\Om} 
\leq \;
Q \, \| \, \mu^{1/2} \, \bar{\bs{z}} \, \|_{0,\Om}
=
Q \, \| \, \mu^{1/2} \, \bs{z} \, \|_{0,\tO}
\; ,
\end{align}
where $Q>0$ is a non-dimensional constant independent of $\bar{\bs{z}}$ and the mesh size. \\
The same arguments that led to~\eqref{eq:bsu-g} show that on $\tG$ $\bs{\psi}$ satisfies
\begin{align}
\bs{S}_{h} \bs{\psi} + \bs{R}_{h} \bs{\psi}  = \bs{0} \;.
\end{align}
Since by assumption $\Gamma$ is of class ${\mathcal C}^2$ and $\bs{\psi}$ is in $H^2(\Om)^{n_d}$, we can apply Lemma \ref{lem:errorDir} to each component of $\bs{\psi}$, obtaining
\begin{align}
\label{eq:prop3_theoretical_vector_cc}
\| \, h^{-1/2} \, \bs{R}_h \bs{\psi} \, \|_{0,\tG} 
\leq 
C_{DR} \, h_{\G} \, | \, \bs{\psi} \, |_{2,\Om \setminus \tO} 
\leq 
C_{DR} \, Q \, h_{\tO} \, \| \, \bs{z}  \, \|_{0,\tO} \, ,
\end{align} 
where $C_{DR}$ is a positive constant independent of the mesh size.
Next, consider $[\bs{w}\, , \, q] \in  \bs{W}(\tO;\tT) $ and note that the pair $[\bs{\psi} \, , \, \lambda]$ also satisfies the following variational statement: 
\begin{align}
\mu \, ( \bs{z} \, , \, \bs{w} )_{\tO} 
& = \;
- ( \nabla \cdot ( 2 \mu \, \bs{\varepsilon}(\bs{\psi}) + \lambda \bs{I}  )  \, , \, \bs{w} )_{\tO} 
- ( \nabla \cdot  \bs{\psi}  \, , \, q )_{\tO}
\nonumber \\ 
& = \;
(   2 \mu \, \bs{\varepsilon}(\bs{\psi})   \, , \, \bs{\varepsilon}(\bs{w}) )_{\tO} 
+ (   \lambda  \, , \, \nabla \cdot \bs{w} )_{\tO} 
- ( \nabla \cdot  \bs{\psi}  \, , \, q )_{\tO}
-\avg{ 2 \mu \, \bs{\varepsilon}(\bs{\psi}) + \lambda \bs{I} \, , \, \bs{w} \otimes \ti{\bs{n}}}_{\tG}
\nonumber \\ 
& =  \;
(   2 \mu \, \bs{\varepsilon}(\bs{\psi})   \, , \, \bs{\varepsilon}(\bs{w}) )_{\tO} 
+ (   \lambda  \, , \, \nabla \cdot \bs{w} )_{\tO} 
- ( \nabla \cdot  \bs{\psi}  \, , \, q )_{\tO}
-\avg{ 2 \mu \, \bs{\varepsilon}(\bs{\psi}) + \lambda \bs{I} \, , \, \bs{S}_{h} \bs{w} \otimes \ti{\bs{n}}}_{\tG} 
\nonumber \\
& \phantom{=} \;
+ \avg{ 2 \mu \, \bs{\varepsilon}(\bs{\psi}) + \lambda \bs{I} \, , \, (\nabla \bs{w} \, \bs{d}) \otimes \ti{\bs{n}}}_{\tG}  \; . 	 
\end{align}
Adding residual terms that vanish by definition when applied to the exact solution, we have
\begin{align}
\mu \, ( \bs{z} \, , \, \bs{w} )_{\tO} 
& = \;
(   2 \mu \, \bs{\varepsilon}(\bs{\psi})   \, , \, \bs{\varepsilon}(\bs{w}) )_{\tO} 
+ (   \lambda  \, , \, \nabla \cdot \bs{w} )_{\tO} 
- ( \nabla \cdot  \bs{\psi}  \, , \, q )_{\tO}
-\avg{ 2 \mu \, \bs{\varepsilon}(\bs{\psi}) + \lambda \bs{I} \, , \, \bs{S}_{h} \bs{w} \otimes \ti{\bs{n}}}_{\tG} 
\nonumber \\
& \phantom{=} \;
+ \avg{ 2 \mu \, \bs{\varepsilon}(\bs{\psi}) + \lambda \bs{I} \, , \, (\nabla \bs{w} \, \bs{d}) \otimes \ti{\bs{n}}}_{\tG} 
- \avg{ (\bs{S}_{h} \bs{\psi} + \bs{R}_{h} \bs{\psi} ) \otimes \ti{\bs{n}} \, , \, 2 \mu \, \bs{\varepsilon}(\bs{w}) - q \bs{I}  }_{\tG}
\nonumber \\
& \phantom{=}  \;
+ \alpha \, \avg{  2 \mu \, h^{-1} \, (\bs{S}_{h} \bs{\psi} + \bs{R}_{h} \bs{\psi} ) \, , \, \bs{S}_{h} \bs{w} }_{\tG}
\nonumber \\ 
& = \;
\mathcal{B}([\bs{w} \, ,\,q];[\bs{\psi}\, , \, \lambda] )
- \gamma \sum_{T \in \ti{\cal T}_h} \left(  h^{2} \, (2\mu)^{-1} \, \left(\nabla q -   \nabla \cdot (2 \mu \, \bs{\varepsilon}(\bs{w})) \right)  \, , \, \nabla \lambda  \right)_{T} 
\nonumber \\ 
& \phantom{=}  \;
+ \avg{ 2 \mu \, \bs{\varepsilon}(\bs{\psi}) + \lambda \bs{I} \, , \, (\nabla \bs{w} \, \bs{d}) \otimes \ti{\bs{n}}}_{\tG} 
- \avg{ (\nabla \bs{\psi} \, \bs{d}) \otimes \ti{\bs{n}} \, , \, 2 \mu \, \bs{\varepsilon}(\bs{w}) - q \bs{I} }_{\tG} 
\nonumber \\ 
& \phantom{=}  \;
- \avg{ \bs{R}_{h} \bs{\psi} \otimes \ti{\bs{n}} \, , \, 2 \mu \, \bs{\varepsilon}(\bs{w}) - q \bs{I} }_{\tG}
+ \alpha \, \avg{ 2 \mu \, h^{-1} \,  \bs{R}_{h} \bs{\psi} \, , \, \bs{S}_{h} \bs{w} }_{\tG}
\end{align}
Let us pick $\bs{w} = \bs{z} = \bs{e}_{\bs{u}} := \bs{u} - \bs{u}_h $, $q = e_{p} := p - p_h $. Using Lemma~\ref{lem:SBMorthogonality} with $[\bs{v}_h\, , \,  \omega_h] = [\bs{\psi}_{I} \, , \, \lambda_{I}] := [\mathcal{I}_h(\bs{\psi})\, , \, \mathcal{I}_h(\lambda)]$, where $\mathcal{I}_h$ denotes the Scott-Zhang interpolant at the nodes of the triangulation, we write
\begin{subequations}
\begin{align}
\label{eq:duality_terms}
 \mu \, \| \, \bs{e}_{\bs{u}} \, \|^{2}_{\tO}
& =  \;
\mathcal{B}([ \bs{e}_{\bs{u}} \, ,\, e_p];[\bs{\psi} - \bs{\psi}_I \, , \, \lambda - \lambda_I ] )
+ \mathcal{E}_{\rm stab}( [ \bs{e}_{\bs{u}} \, ,\, e_p ]; [ \bs{0} \, , \, \lambda ] )
+ \mathcal{E}_{\rm sym}( [ \bs{e}_{\bs{u}} \, ,\, e_p ]; [ \bs{\psi} \, , \, \lambda ] )
\nonumber \\
& \phantom{=} \; 
+ \mathcal{E}_{\rm rem}( [ \bs{e}_{\bs{u}} \, ,\, e_p ]; [ \bs{\psi} \, , \, \lambda ] )
+ \mathcal{E}_{\rm ort}( [ \bs{u} \, ,\, p ]; [ \bs{\psi}_I \, , \, \lambda_I ] )
\end{align}
with
\begin{align}
\mathcal{E}_{\rm stab}( [ \bs{e}_{\bs{u}} \, ,\, e_p ]; [ \bs{0} \, , \, \lambda ] )
&:=\;
- \gamma \sum_{T \in \ti{\cal T}_h} \left(  h^{2} \, (2\mu)^{-1} \, \left(\nabla e_p -   \nabla \cdot (2 \mu \, \bs{\varepsilon}(\bs{e}_{\bs{u}})) \right)  \, , \, \nabla \lambda  \right)_{T} 
\; ,
\\[.2cm]
\mathcal{E}_{\rm sym}( [ \bs{e}_{\bs{u}} \, ,\, e_p ]; [ \bs{\psi} \, , \, \lambda ] )
&:=\;
\avg{ 2 \mu \, \bs{\varepsilon}(\bs{\psi}) + \lambda \bs{I} \, , \, (\nabla \bs{e}_{\bs{u}} \, \bs{d}) \otimes \ti{\bs{n}}}_{\tG} 
- \avg{ (\nabla \bs{\psi} \, \bs{d}) \otimes \ti{\bs{n}} \, , \, 2 \mu \, \bs{\varepsilon}(\bs{e}_{\bs{u}}) - e_p \bs{I} }_{\tG} 
\; ,
\\[.2cm]
\mathcal{E}_{\rm rem}( [ \bs{e}_{\bs{u}} \, ,\, e_p ]; [ \bs{\psi} \, , \, \lambda ] )
&:=\;
- \avg{ \bs{R}_{h} \bs{\psi} \otimes \ti{\bs{n}} \, , \, 2 \mu \, \bs{\varepsilon}(\bs{e}_{\bs{u}}) - e_p  \bs{I} }_{\tG}
+ \alpha \, \avg{ 2 \mu \, h^{-1} \,  \bs{R}_{h} \bs{\psi} \, , \, \bs{S}_{h} \bs{e}_{\bs{u}} }_{\tG} 
\; ,
\\[.2cm]
\mathcal{E}_{\rm ort}( [ \bs{u} \, ,\, p ]; [ \bs{\psi}_I \, , \, \lambda_I ] ) 
&:= \;
\avg{ \bs{R}_{h} \bs{u} \otimes \ti{\bs{n}}  \, , \, 2 \mu \, \bs{\varepsilon}(\bs{\psi}_I) + \lambda_I \bs{I} }_{\tG}
- \alpha \, \avg{ 2 \mu \, h^{-1}   \, \bs{R}_{h} \bs{u} \, , \, \bs{S}_{h} \bs{\psi}_I   }_{\tG}
\; .
\end{align}
\end{subequations}
We proceed to bound the error terms on the right-hand side of \eqref{eq:duality_terms}.
Recalling Proposition 3 and Proposition 5 in~\cite{atallah2020analysis}, we have, for suitable constants,
\begin{subequations}
\begin{align}
\label{eq:eBilinearStokes}
|\, \mathcal{B}( [ \bs{e}_{\bs{u}} \, ,\, e_p];[\bs{\psi} - \bs{\psi}_I \, , \, \lambda - \lambda_I ] )\, |
& \leq \;
C_{\cal B} \, \| \, [ \bs{e}_{\bs{u}} \, ,\, e_p] \, \|_{\bs{W}(\tO;\tT)} 
\, \| \, [\bs{\psi} - \bs{\psi}_I \, , \, \lambda - \lambda_I ]  \, \|_{\bs{W}(\tO;\tT)}
\nonumber \\
& \leq \;
 C_{\cal B}\, C_{APP} \, h_{\tO} \left( \, \mu^{1/2} \, | \, \bs{\psi} \, |_{2, \tO} + \mu^{-1/2} \, | \,  \lambda \, |_{1,\tO} \right) \, 
\| \, [ \bs{e}_{\bs{u}} \, ,\, e_p] \, \|_{\bs{W}(\tO;\tT)} 
\nonumber \\
& \leq \;
C_{7} \, h_{\tO} \, \mu^{1/2} \, \| \, [ \bs{e}_{\bs{u}} \, ,\, e_p] \, \|_{\bs{W}(\tO;\tT)} \, \| \, \bs{e}_{\bs{u}} \, \|_{0, \tO} \; .
\end{align}
From the definition of the norm $\bs{W}(\tO;\tT)$, we immediately get
\begin{align}
\label{eq:eStabStokes}
| \, \mathcal{E}_{\rm stab}( [ \bs{e}_{\bs{u}} \, ,\, e_p ]; [ \bs{0} \, , \, \lambda ] ) \, |
&\leq\;
\|  \, (2 \mu)^{-1/2} \, h \, \nabla \lambda \, \|_{0,\tO} 
\left( 
\|  \, (2 \mu)^{-1/2} \, h \, \nabla e_{p} \, \|_{0,\tO}  
+ \|  \, (2 \mu)^{1/2} \, h \, \nabla \cdot \bs{\varepsilon}(\bs{e}_{\bs{u}}) \, \|_{0,\tO}  
\right)
\nonumber \\ 
& \leq\;
C_8 \,  h_{\tO} \, \mu^{1/2} \,
\| \, [ \bs{e}_{\bs{u}} \, ,\, e_p] \, \|_{\bs{W}(\tO;\tT)} \, 
\| \, \bs{e}_{\bs{u}} \, \|_{0, \tO} \; .
\end{align}
Recalling Assumption~\ref{ass:d_asym}, Theorem~\ref{thm:TraceTheorem}, and Theorem~\ref{theo:BoundaryTraceIneq}, we obtain
\begin{align} 
\label{eq:eSymStokes}
| \, \mathcal{E}_{\rm sym}( [ \bs{e}_{\bs{u}} \, ,\, e_p ]; [ \bs{\psi} \, , \, \lambda ] ) \, |
&\leq\;
{c_d} \, \hat{h}_{\tG}^{\zeta} \, 
\|  \, (2 \mu \, h)^{1/2} \, \nabla \bs{e}_{\bs{u}}  \cdot \bs{\nu} \, \|_{0,\tG}  \, 
\left( 
\| \, (2 \mu \, h)^{1/2} \, \bs{\varepsilon}(\bs{\psi}) \, \|_{0,\tG}  
+ \| \, (2 \mu \, h^{-1})^{-1/2} \,  \lambda \, \|_{0,\tG} 
\right)
\nonumber \\
& \phantom{=} \;
+ {c_d} \, \hat{h}_{\tG}^{\zeta} \, 
\| \, (2 \mu \, h)^{1/2} \, \nabla \bs{\psi}  \, \|_{0,\tG} 
\left(
\|  \, (2 \mu \, h)^{1/2} \, \bs{\varepsilon}(\bs{e}_{\bs{u}}) \, \|_{0,\tG}  \, 
+ \|  \, (2 \mu \, h^{-1})^{-1/2} \, e_p \, \|_{0,\tG}  \, 
\right)
\nonumber \\
&\leq\;
{c_d} \, C  \, C_{I} \,  h_{\tO}^{1/2} \,
\| \, [ \bs{e}_{\bs{u}} \, ,\, e_p]\,\|_{\bs{W}(\tO;\tT)} 
\, \times
\nonumber \\
&\phantom{\leq} \; \; \; \times
\left( \, l(\tO)^{-3/2} \, \mu^{1/2} \, \| \,  \bs{\psi} \, \|_{2,\tO}  
+  l(\tO)^{-1/2} \,\mu^{-1/2} \, \| \,  \lambda \, \|_{1,\tO}
\right)
\nonumber \\ 
& \leq\;
C_9 \,  h_{\tO}^{1/2} \, l(\tO)^{1/2} \,  \mu^{1/2} \,
\| \, [ \bs{e}_{\bs{u}} \, ,\, e_p] \, \|_{\bs{W}(\tO;\tT)} \, 
\| \, \bs{e}_{\bs{u}} \, \|_{0, \tO} \; .
\end{align}
Invoking now the bound \eqref{eq:prop3_theoretical_vector_cc}, we have
\begin{align} 
\label{eq:eRemStokes}
| \, \mathcal{E}_{\rm rem}( [ \bs{e}_{\bs{u}} \, ,\, e_p ]; [ \bs{\psi} \, , \, \lambda ] ) \, |
&\leq\;
\left( 
\|  \, (2 \mu \, h)^{1/2} \,\bs{\varepsilon}(\bs{e}_{\bs{u}}) \, \|_{0,\tG} 
+ \|  \, (2 \mu \, h^{-1})^{-1/2} \, e_p \, \|_{0,\tG} 
\right)
\| \, (2 \mu \, h^{-1})^{1/2} \,  \bs{R}_{h} \bs{\psi} \, \|_{0,\tG}  
\nonumber \\
& \phantom{=} \;
+ \alpha \, \|  \, (2 \mu \, h^{-1})^{1/2} \, \bs{S}_{h} \bs{e}_{\bs{u}} \, \|_{0,\tG} \,
\| \, (2 \mu \, h^{-1})^{1/2} \,  \bs{R}_{h} \bs{\psi} \, \|_{0,\tG}  
\nonumber \\
&\leq\;
\left( 2C_I + \alpha \right)  \, \mu^{1/2} \, 
\| \, [ \bs{e}_{\bs{u}} \, ,\, e_p] \, \|_{\bs{W}(\tO;\tT)} \,
\| \,  h^{-1/2} \,  \bs{R}_{h} \bs{\psi} \, \|_{0,\tG} 
\nonumber \\
&\leq\;
C_{10} \, h_{\tO} \, \mu^{1/2} \, 
\| \, [ \bs{e}_{\bs{u}} \, ,\, e_p] \, \|_{\bs{W}(\tO;\tT)} \,
\| \, \bs{e}_{\bs{u}} \, \|_{0, \tO}  
 \; .
\end{align}
At last, recalling the classical error estimates for the Scott-Zhang interpolant, we get
\begin{align}
\label{eq:eOrtStokes}
| \, \mathcal{E}_{\rm ort}( [ \bs{u} \, ,\, p ]; [ \bs{\psi}_I \, , \, \lambda_I ] ) \, |
& = \; 
| \, -\avg{\bs{R}_{h} \bs{u} \otimes \ti{\bs{n}} \, , \, 2 \mu \, \bs{\varepsilon}(\bs{\psi} - \bs{\psi}_I) + (\lambda - \lambda_I) \bs{I} - 2 \mu \, \bs{\varepsilon}(\bs{\psi}) - \lambda  \bs{I} }_{0,\tG} 
\nonumber \\
& \phantom{=} \;
+ \alpha \, \avg{  2 \mu \, h^{-1} \, \bs{R}_{h} \bs{u} \, , \, \bs{S}_{h}  ( \bs{\psi} - \bs{\psi}_I) - \bs{\psi} }_{0,\tG} \, |
\nonumber \\ 
& \leq \;
\| \, (2 \mu \, h^{-1})^{1/2} \, \bs{R}_{h} \bs{u} \, \|_{0,\tG} \, 
\left( 
\|  \, (2 \mu \, h)^{1/2} \, \bs{\varepsilon}(\bs{\psi} - \bs{\psi}_I)  \, \|_{0,\tG}
+   \|  \, (2 \mu \, h)^{1/2} \, \bs{\varepsilon}(\bs{\psi})   \, \|_{0,\tG}
\right)
\nonumber \\
& \phantom{=} \;
+\| \, (2 \mu \, h^{-1})^{1/2} \, \bs{R}_{h} \bs{u} \, \|_{0,\tG} \, 
\left( 
\|  \, (2 \mu \, h^{-1})^{-1/2} \, (\lambda - \lambda_I)  \, \|_{0,\tG}
+   \|  \, (2 \mu \, h^{-1})^{-1/2} \, \lambda   \, \|_{0,\tG}
\right)
\nonumber \\
& \phantom{=} \;
 + \alpha \, \| \, (2 \mu \, h^{-1})^{1/2} \, \bs{R}_{h} \bs{u} \, \|_{0,\tG} \, 
\, \times
\nonumber \\
&\phantom{\leq} \; \; \; \times
\left( 
\|  \, (2 \mu \, h^{-1})^{1/2} \, \bs{S}_{h}  ( \bs{\psi} - \bs{\psi}_I)  \, \|_{0,\tG}
+  \|  \, (2 \mu \, h^{-1})^{1/2} \, \bs{S}_{h}  \bs{\psi}  \, \|_{0,\tG}
\right)
\nonumber \\
& \leq \; 
C_{11} \, h_{\tO}^{3/2}  \, l(\tO)^{1/2} \,  \mu \, 
\| \, \bs{e}_{\bs{u}} \, \|_{0, \tO}
\| \,  \nabla(\nabla \bs{u}) \, \|_{0,\Om \setminus \tO} \; . 
\end{align}
\end{subequations}
Thus, combining~\eqref{eq:eBilinearStokes},~\eqref{eq:eStabStokes},~\eqref{eq:eSymStokes},~\eqref{eq:eRemStokes}, and~\eqref{eq:eOrtStokes} in~\eqref{eq:duality_terms} yields
\begin{align}
\| \, \bs{u} - \bs{u}_h \, \|_{0,\tO} 
\leq 
C_{ANS} \, 
 h_{\tO}^{1/2} \, l(\tO)^{1/2} \, \mu^{-1/2} \,
 \| \, [ \bs{u} - \bs{u}_h \, ,\, p - p_h] \, \|_{\bs{W}(\tO;\tT)} \, 
\;.
\end{align}
The right-hand side can be bounded using Theorem~\ref{thm:StokesConvergenceNatural}, which concludes the proof.
\end{proof}

\begin{rem}
	The previous bound is clearly sub-optimal due to the terms in $\mathcal{E}_{\rm sym}( [ \bs{e}_{\bs{u}} \, ,\, e_p ]; [ \bs{\psi} \, , \, \lambda ] )$. However, it is not clear at the moment if the above estimate is sharp, since in computations we always observe optimal, second-order convergence rates. 
\end{rem}
%


\begin{table}[hbt!]\centering
	\begin{tabular}{M{1.8cm} | M{6cm}  | M{5.8cm}  }
		\multicolumn{3}{c}{}  \\  
		Mesh Size & No. of surrogate edges with $\bs{\nu} \cdot \bs{n} \leq 0$  & Percentage of total surrogate edges   \\
		\hline
		4.00E-02 & 1 & 4.35\%   \\
		2.00E-02 & 1 & 2.33\%   \\
		1.00E-02 & 5 & 5.43\%   \\
		5.00E-03 & 9 & 5.06\%  \\
		2.50E-03 & 23 & 6.35\%  \\
		1.25E-03 & 38 & 5.38\%   \\
	\end{tabular}
	\caption{Number of surrogate edges with $\bs{\nu} \cdot \bs{n} \leq 0$, as the computational grid of Figure~\ref{fig:trapezoid_all} is refined. \label{table1} }
\end{table}
\section{Two-dimensional numerical tests}
\label{sec:2DNumerical_Results}
In this section, we perform convergence tests comparing - on grids of similar size - the new proposed SBM formulation with the corresponding body-fitted formulation for the Poisson and Stokes flow equations. 
Our numerical tests are performed on a domain $\Om$ given by the right trapezoid of height $s = 1$ and bases $b_{1} = 0.6$ and $b_{2} = 0.4$ as shown in Figure~\ref{fig:trapezoid}. 
To verify that the SBM formulations do not require the geometric resolution condition $\inf_{\tG} \bs{\nu} \cdot \bs{n} > 0$, the computational grids are carefully built so as to violate it, as shown in Figure~\ref{fig:surrogate}. 
In particular, the computational grids are constructed by splitting into {\it four} equal-area triangles each of the rectangular elements in a Cartesian grid of aspect ratio 5:1.
 Table~\ref{table1} accounts for the absolute and relative number of surrogate edges of each mesh for which the geometric resolution assumption is violated. 
The grids used for the body-fitted method are virtually identical to those used for the SBM, in the sense that they are obtained by moving the true boundary $\G$ by $-10^{-15}$ vertically, so as to attain $\tG = \G$ (see Figure~\ref{fig:surrogate}). 

The algebraic system of equations is solved using a smoothed aggregation multigrid method as preconditioner from Sandia's ML Library, part of the Trilinos Software Project~\cite{Tong2000MLSandia}.
\begin{figure}
	\centering
\begin{subfigure}[b]{.45\textwidth}\centering
\begin{tikzpicture}[scale=0.8]
\draw[line width = 0.5mm,black,name path=s1] (0.0,0.0) -- (3,0.0);
\draw[line width = 0.5mm,black,name path=s1] (3.0,0.0) -- (4.5,7.5);
\draw[line width = 0.5mm,black,name path=s1] (4.5,7.5) -- (0.0,7.5) ;
\draw[line width = 0.5mm,black,name path=s2] (0,7.5) -- (0.0,0.0);
\draw [black, draw=none,name path=p1] plot coordinates { (0.0,0.0) (3,0.0) (4.5,7.5) (0.0,7.5) (0.0,0.0) };
\draw [black, draw=none,name path=p2] plot coordinates { (0.0,7.5) (0.0,0.0) };
\tikzfillbetween[of=p1 and p2,split]{gray!25!};
\dimline[extension start length =0,extension end length = 0,line style = {line width=0.75}]{(0,-0.5)}{(3,-0.5)}{$b_{2}$};
\dimline[extension start length =0,extension end length = 0,line style = {line width=0.75}]{(-0.5,0)}{(-0.5,7.5)}{$s$};
\dimline[extension start length =0,extension end length = 0,line style = {line width=0.75}]{(0,8)}{(4.5,8)}{$b_{1}$};
\end{tikzpicture}
\caption{The domain $\Om$, a right trapezoid with $s = 1$, $b_{1} = 0.6$ and $b_{2} = 0.4$.}
\label{fig:trapezoid}
\end{subfigure}
\hspace{0.5cm}
\begin{subfigure}[b]{.45\textwidth}\centering
\includegraphics[width=0.65\textwidth]{\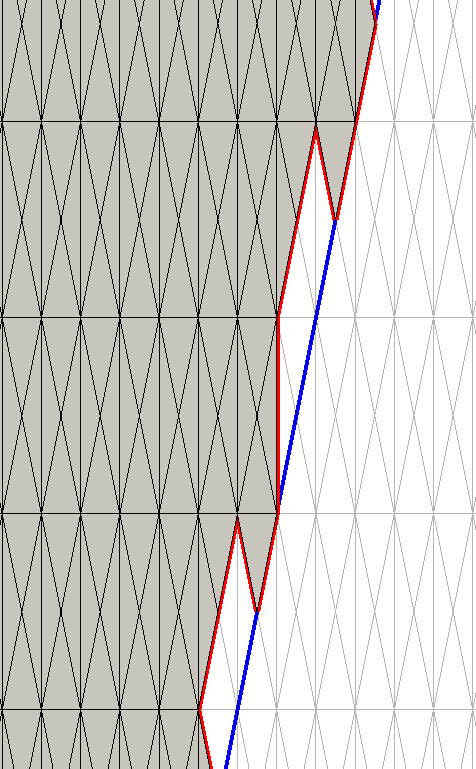}
\caption{Zoomed view: the surrogate domain $\ti{\Om}$ (grey), the true boundary ${\G}$ (blue), and the surrogate boundary $\tG$ (red).}
\label{fig:surrogate}
\end{subfigure}
\caption{The true domain $\Om$ (left) and the surrogate domain $\tO$ (right).}
\label{fig:trapezoid_all}
\end{figure}

\subsection{Poisson problem}
\label{sec:Poisson}
In this first test, we considered Poisson's equation defined on $\Om$ with the manufactured solution
\begin{equation}
u(x,y) = y \sin(2 \pi x) - x  \cos(2 \pi y) \; . 
\end{equation}
Dirichlet boundary conditions are applied on all boundaries and the Nitsche penalty parameter is set as $\alpha = 10$.
Figures~\ref{fig:poisson_solution} and~\ref{fig:poisson_conv} show the numerical solution  and $L^{2}$-error rates for the SBM and body-fitted variational forms.
It is apparent that violating the condition $\inf_{\tG} \bs{\nu} \cdot \bs{n} > 0$ has no effect on the convergence rate of the SBM. In fact, the $L^2$-norm of the SBM error converges quadratically, and therefore faster than the rate $3/2$ theoretically predicted, as shown in Table~\ref{table2}. Such behavior has been observed in all computations performed to date, including the ones reported in~\cite{main2018shifted0,main2018shifted,atallah2020analysis}.
\begin{figure}[bt]
	\centering
	\hspace{-0.6cm}
\begin{subfigure}[b]{0.08\textheight} \centering
\includegraphics[height=2.35\textwidth]{\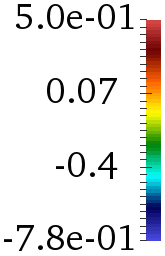}
\caption*{}
\label{fig:poisson_scale}
\end{subfigure}
\quad
\begin{subfigure}[b]{.35\textwidth}\centering
\includegraphics[width=0.75\textwidth]{\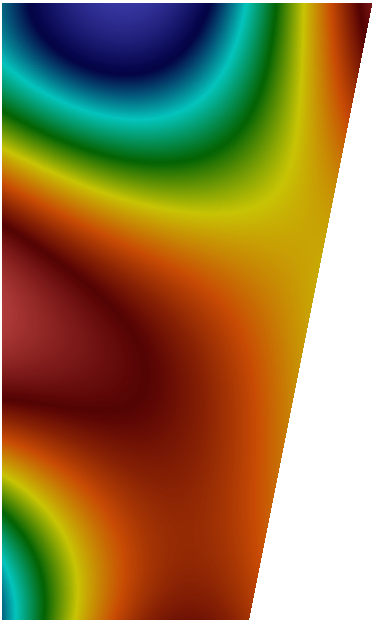}
\caption{Numerical solution.}
\label{fig:poisson_solution}
\end{subfigure}
\begin{subfigure}[b]{.5\textwidth}\centering
	\centering
\begin{tikzpicture}[scale= 1]
\begin{loglogaxis}[
grid=major,x post scale=1.0,y post scale =1.0,
xlabel={$h$},
legend style={font=\footnotesize,legend cell align=left},
legend pos= north west,
axis equal	]
\addplot[ mark=x,blue,line width=1pt] coordinates {
	(1.25E-03,4.96E-06)
	(2.50E-03,1.98E-05)
	(5.00E-03,7.92E-05)
	(1.00E-02,3.16E-04)
	(2.00E-02,1.26E-03)
	(4.00E-02,4.95E-03)
}; 
\addplot[ mark=x,red,line width=1pt] coordinates {
	(1.25E-03,4.98E-06)
	(2.50E-03,1.99E-05)
	(5.00E-03,7.96E-05)
	(1.00E-02,3.19E-04)
	(2.00E-02,1.28E-03)
	(4.00E-02,5.12E-03)
}; 
	\addplot[black,line width=1pt] coordinates {
	(3E-3,0.00001)
	(6E-3,0.00004)
	(6E-3,0.00001)
	(3E-3,0.00001)
}; 
\legend{SBM,body-fitted}
\end{loglogaxis}
\node[text width=0.2cm] at (3.5,1.4) {\footnotesize $2$};
\node[text width=0.2cm] at (3.15,0.76) {\footnotesize $1$};
\end{tikzpicture}
\caption{Convergence rate of $ \, \| \, u - u^h) \, \|_{0,\Om} $.}
\label{fig:poisson_conv}
	\end{subfigure}
	\caption{Poisson problem: plots of the solution and error convergence rates.}
\end{figure}
\begin{table}[bt]\centering
	\begin{tabular}{M{1.8cm} | M{2.3cm}  M{1.5cm} | M{2.3cm}  M{1.5cm} }
		& \multicolumn{2}{c|}{Body-fitted}  & \multicolumn{2}{c}{ SBM }  \\ 
		Mesh Size & $ \| \, u - u^h \, \|_{0,\Om} $ & Rate & $ \| \, u - u^h \, \|_{0,\tO} $ & Rate \\
		\hline
		4.00E-02 & 4.95E-03 & - & 5.12E-03 & -\\
		2.00E-02 & 1.26E-03 & 2.00  & 1.28E-03 & 2.00\\
		1.00E-02 & 3.16E-04 & 2.00  & 3.19E-04 & 2.00\\
		5.00E-03 & 7.92E-05 & 2.00  & 7.96E-05 & 2.00\\
		2.50E-03 & 1.98E-05 & 2.00  & 1.99E-05 & 2.00 \\
		1.25E-03 & 4.96E-06 & 2.00 & 4.98E-06 & 2.00\\
	\end{tabular}
	\caption{ Convergence rates for the Poisson equation using the body-fitted and the SBM approaches. \label{table2} }
\end{table}

\begin{figure}[t!]\centering	
	\label{fig:Stokes_solutions}
	\begin{subfigure}[b]{0.08\textheight} \centering
		\includegraphics[height=2\textwidth]{\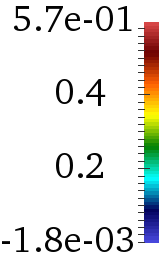}
		\caption*{}
		\label{fig:pressure_scale}
	\end{subfigure}
		\begin{subfigure}[b]{.27\textwidth}\centering
	\includegraphics[width=0.75\linewidth]{\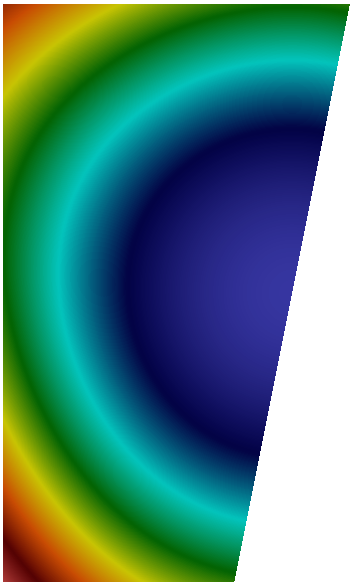}
		\caption{Pressure numerical solution.}
		\label{fig:efig1}
	\end{subfigure}	
\qquad
	\begin{subfigure}[b]{0.08\textheight} \centering
	\includegraphics[height=2.35\textwidth]{\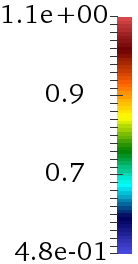}
	\caption*{}
	\label{fig:velocity_scale}
	\end{subfigure}
	\begin{subfigure}[b]{.27\textwidth}\centering
	\includegraphics[width=0.75\linewidth]{\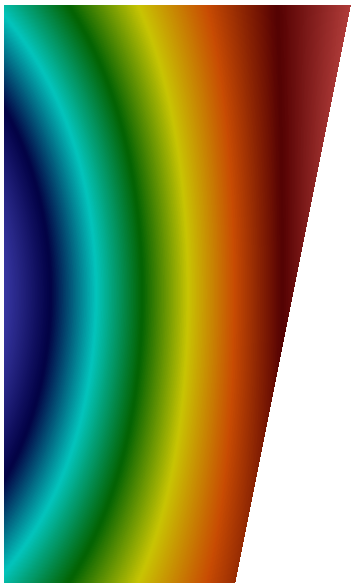}
	\caption{Velocity numerical solution.}
	\label{fig:efig2}
	\end{subfigure}	
\caption{Stokes flow problem: Solution plots.}
\end{figure}
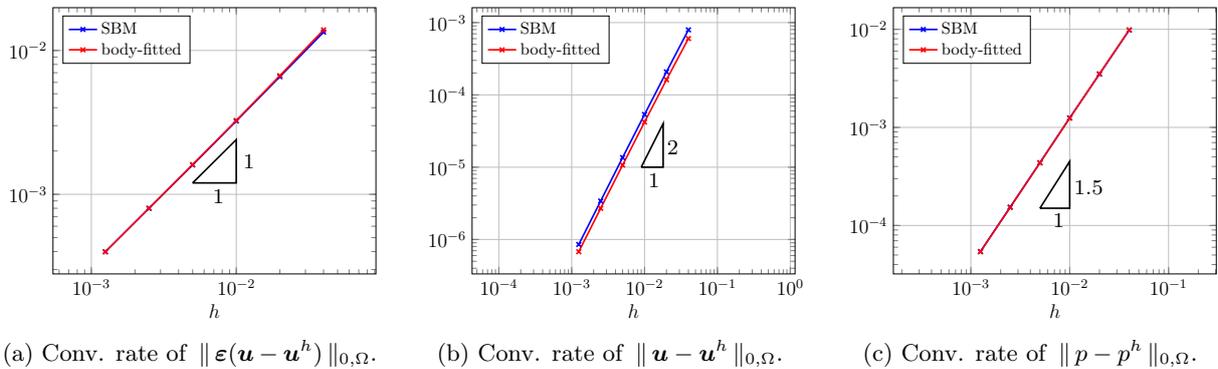
\begin{figure}[t!]\centering	
\begin{subfigure}[b]{.3\linewidth}\centering
	\centering
	\begin{tikzpicture}[scale= 0.62]
	\begin{loglogaxis}[
	grid=major,x post scale=1.0,y post scale =1.0,
	tick label style={font=\large},
	xlabel style={font=\large},
	xlabel={$h$},
	legend style={font=\normalsize,legend cell align=left},
	legend pos= north west,
	axis equal	]
	\addplot[ mark=x,blue,line width=1pt] coordinates {
		(1.25E-03,3.99E-04)
		(2.50E-03,7.99E-04)
		(5.00E-03,1.60E-03)
		(1.00E-02,3.23E-03)
		(2.00E-02, 6.57E-03)
		(4.00E-02,1.34E-02)
	}; 
	\addplot[ mark=x,red,line width=1pt] coordinates {
		(1.25E-03,3.99E-04)
		(2.50E-03,8.01E-04)
		(5.00E-03,1.61E-03)
		(1.00E-02,3.26E-03)
		(2.00E-02,6.68E-03  )
		(4.00E-02,1.39E-02)
	}; 
	\addplot[black,line width=1pt] coordinates {
		(5E-3,0.00120)
		(10E-3,0.00240)
		(10E-3,0.00120)
		(5E-3,0.00120)
	}; 
	\legend{SBM,body-fitted}
	\end{loglogaxis}
	\node[text width=0.2cm] at (4.2,2.4) {\footnotesize $1$};
	\node[text width=0.2cm] at (3.55,1.65) {\footnotesize $1$};
	\end{tikzpicture}
	\caption{Conv. rate of $ \, \| \, \bs{\varepsilon}(\bs{u}- \bs{u}^h) \, \|_{0,\Om} $.}
	\label{fig:efig5}
\end{subfigure}
\hspace{0.36cm}
\begin{subfigure}[b]{.3\linewidth}\centering
			\centering
\begin{tikzpicture}[scale= 0.62]
\begin{loglogaxis}[
grid=major,x post scale=1.0,y post scale =1.0,
tick label style={font=\large},
xlabel style={font=\large},
xlabel={$h$},
legend style={font=\normalsize,legend cell align=left},
legend pos= north west,
axis equal	]
\addplot[ mark=x,blue,line width=1pt] coordinates {
	(1.25E-03,8.54E-07)
	(2.50E-03,3.41E-06)
	(5.00E-03,1.36E-05)
	(1.00E-02,5.36E-05)
	(2.00E-02,2.08E-04)
	(4.00E-02,7.93E-04)
}; 
\addplot[ mark=x,red,line width=1pt] coordinates {
	(1.25E-03,6.77E-07)
	(2.50E-03,2.70E-06)
	(5.00E-03,1.07E-05)
	(1.00E-02,4.21E-05)
	(2.00E-02,1.62E-04)
	(4.00E-02,6.01E-04)
}; 
\addplot[black,line width=1pt] coordinates {
	(9E-3,0.00001)
	(18E-3,0.00004)
	(18E-3,0.00001)
	(9E-3,0.00001)
}; 
\legend{SBM,body-fitted}
\end{loglogaxis}
\node[text width=0.2cm] at (4.3,2.7) {\footnotesize $2$};
\node[text width=0.2cm] at (3.9,2) {\footnotesize $1$};
\end{tikzpicture}
\caption{Conv. rate of $ \, \| \, \bs{u} - \bs{u}^h \, \|_{0,\Om}$.}
\label{fig:efig3}
\end{subfigure}
\hspace{0.36cm}
\begin{subfigure}[b]{.3\linewidth}\centering
	\centering
	\begin{tikzpicture}[scale= 0.62]
	\begin{loglogaxis}[
	grid=major,x post scale=1.0,y post scale =1.0,
	tick label style={font=\large},
	xlabel style={font=\large},
	xlabel={$h$},
	legend style={font=\normalsize,legend cell align=left},
	legend pos= north west,
	axis equal	]
	\addplot[ mark=x,blue,line width=1pt] coordinates {
		(1.25E-03,5.43E-05)
		(2.50E-03,1.54E-04)
		(5.00E-03,4.37E-04)
		(1.00E-02,1.25E-03)
		(2.00E-02,3.49E-03)
		(4.00E-02,9.81E-03)
	}; 
	\addplot[ mark=x,red,line width=1pt] coordinates {
		(1.25E-03,5.40E-05)
		(2.50E-03,1.53E-04)
		(5.00E-03,4.35E-04)
		(1.00E-02,1.24E-03)
		(2.00E-02,3.52E-03 )
		(4.00E-02,9.87E-03)
	}; 
	\addplot[black,line width=1pt] coordinates {
		(5E-3,0.00015)
		(10E-3,0.00045)
		(10E-3,0.00015)
		(5E-3,0.00015)
	}; 
	\legend{SBM,body-fitted}
	\end{loglogaxis}
	\node[text width=0.2cm] at (4,1.85) {\footnotesize $1.5$};
	\node[text width=0.2cm] at (3.55,1.15) {\footnotesize $1$};
	\end{tikzpicture}
	\caption{Conv. rate of $ \, \| \, p - p^h \, \|_{0,\Om} $.}
	\label{fig:efig4}
\end{subfigure}
	\caption{Convergence rates for the Stokes problem test case. \label{fig:Stokes_conv_rates}}
\end{figure}
\begin{table}[hbt!]\centering
	\begin{tabular}{M{1.8cm} | M{2.6cm}  M{1.2cm} |  M{2.3cm}  M{1.2cm} | M{2.1cm}  M{1.2cm} }
		\multicolumn{7}{c}{Body-fitted}  \\  \cline{1-7}
		Mesh Size & $ \, \| \, \bs{\varepsilon}(\bs{u}- \bs{u}^h) \, \|_{0,\Om} $ & Rate &  $ \, \| \, \bs{u} - \bs{u}^h \, \|_{0,\Om}$ & Rate & $ \, \| \, p - p^h \, \|_{0,\Om} $ & Rate   \\
		\hline
		4.00E-02 & 1.39E-02 & - & 6.01E-04 & - & 9.87E-03  & -  \\
		2.00E-02 & 6.68E-03 & 1.06 & 1.62E-04 & 1.89 & 3.52E-03  & 1.49  \\
		1.00E-02 & 3.26E-03 & 1.03 & 4.21E-05 & 1.95 & 1.24E-03 & 1.51 \\
		5.00E-03 & 1.61E-03 & 1.02 & 1.07E-05 & 1.98 & 4.35E-04 & 1.51  \\
		2.50E-03 & 8.01E-04 & 1.01 & 2.70E-06 & 1.99 & 1.53E-04 & 1.51   \\
		1.25E-03 & 3.99E-04 & 1.00 & 6.77E-07 & 1.99 & 5.40E-05 & 1.50  \\
		\multicolumn{7}{c}{ } \\
		\multicolumn{7}{c}{ SBM } \\  \cline{1-7}
		Mesh Size & $ \, \| \, \bs{\varepsilon}(\bs{u}- \bs{u}^h) \, \|_{0,\ti{\Om}}$ & Rate &  $ \, \| \, \bs{u} - \bs{u}^h \, \|_{0,\tO}$ & Rate & $ \, \| \, p - p^h \, \|_{0,\tO} $ & Rate \\
		\hline
		4.00E-02 & 1.34E-02 & - & 7.93E-04 & - & 9.81E-03  & -  \\
		2.00E-02 & 6.57E-03 & 1.03 & 2.08E-04 & 1.93 & 3.49E-03  & 1.49  \\
		1.00E-02 & 3.23E-03 & 1.02 & 5.36E-05 & 1.96 & 1.25E-03 & 1.48  \\
		5.00E-03 & 1.60E-03 & 1.01 & 1.36E-05 & 1.97 & 4.37E-04 & 1.51  \\
		2.50E-03 & 7.99E-04 & 1.01 & 3.41E-06 & 2.00 & 1.54E-04 & 1.50   \\
		1.25E-03 & 3.99E-04 & 1.00 & 8.54E-07 & 2.00 & 5.43E-05 & 1.50  \\
	\end{tabular}
	\caption{Convergence rates for Stokes flow problem using the body-fitted and SBM approaches. \label{table3} }
\end{table}
%
%
\subsection{Stokes flow problem}
\label{sec:Stokes}
We then computed a solution to the Stokes flow problem defined on the same domain $\Om$ of the Poisson problem, and with the manufactured solution proposed in~\cite{atallah2020analysis}, and given as
\[
\begin{cases}
p(x,y) = x^{2} e^{xy} + y^{2} \; , \\
u_{x}(x,y) = - (-0.2x^{3} -0.2x^{2} +x + 1 ) \; \cos(y) \; , \\
u_{y}(x,y) =  (-0.6x^{2} -0.4x + 1 ) \; \sin(y) \; .
\end{cases}
\]
The fluid viscosity is set as $\mu = 1$, the Nitsche penalty is set as $\alpha = 2.5$ and the pressure stabilization parameter is chosen as $\gamma=1$.
Dirichlet conditions are applied to all boundaries, besides the left leg of the trapezoid, where a Neumann condition is enforced.
Figure~\ref{fig:efig1} and~\ref{fig:efig2} show the numerical solutions of the pressure and velocity, respectively. The $L^{2}$-norm of the solution errors, for the SBM and body-fitted algorithms are reported in Figure~\ref{fig:Stokes_conv_rates} and Table~\ref{table3}.
As for the numerical tests presented in Section~\ref{sec:Poisson}, the condition $\inf_{\tGD} \bs{\nu} \cdot \bs{n} > 0$ has no effect on the convergence rates of the solution error, shown in more detail in Table~\ref{table3}. In fact, the $L^2$-norm of the SBM error converges quadratically, hence faster than the theoretically predicted rate $3/2$.

\section{A three-dimensional numerical test for Stokes flow in complex geometry}
\label{sec:3DNumerical_Results}
In this section, we solve the Stokes flow problem in a three-dimensional domain given by a ``sponge-like'' cavity~\cite{sponge2018} shown in Figure~\ref{fig:spongefig1}.
This complex geometry contains a large number of holes, internal channels and sharp corners, and is a better representative of typical engineering applications.
Moreover, the domain considered here is represented in STL format (ie, a set of disconnected triangular facets) and generating grids on geometries that are not ``water-tight'' may be even more challenging for state-of-the-art (adaptive) meshing algorithms.

This geometry is immersed in a domain $\Om = [-10,-10] \times [-10,-10] \times [-11,7] $ with a total of approximately 7.5 and 25.2 million tetrahedral elements for a coarse and high resolution grid, whose surrogate boundaries are shown in Figure~\ref{fig:spongefig3} and Figure~\ref{fig:spongefig4}, respectively. 
Table~\ref{table4} accounts for the absolute and relative number of surrogate faces of the coarse (Figure~\ref{fig:spongefig3}) and fine (Figure~\ref{fig:spongefig4}) grids for which the geometric resolution condition $\inf_{\tGD} \bs{\nu} \cdot \bs{n} > 0$  is violated. 
For the numerical setup, we choose the same viscosity $\mu$ and stabilization parameters as the ones in Section~\ref{sec:Stokes}.

Figures~\ref{fig:spongefig2} and~\ref{fig:spongefig5} display the streamlines profile around the true geometry colored with pressure contours for the grids given in Figure~\ref{fig:spongefig3} and Figure~\ref{fig:spongefig4} respectively. These smooth and well-behaved numerical results indicate that the SBM can robustly capture the behavior of Stokes flow across very complex geometries despite the increasing number of faces violating the geometric resolution condition as the mesh is refined.  
\begin{figure}[t!]\centering	
	\begin{subfigure}[hbt]{0.32\textwidth}\centering
		\includegraphics[width=0.95 \linewidth]{\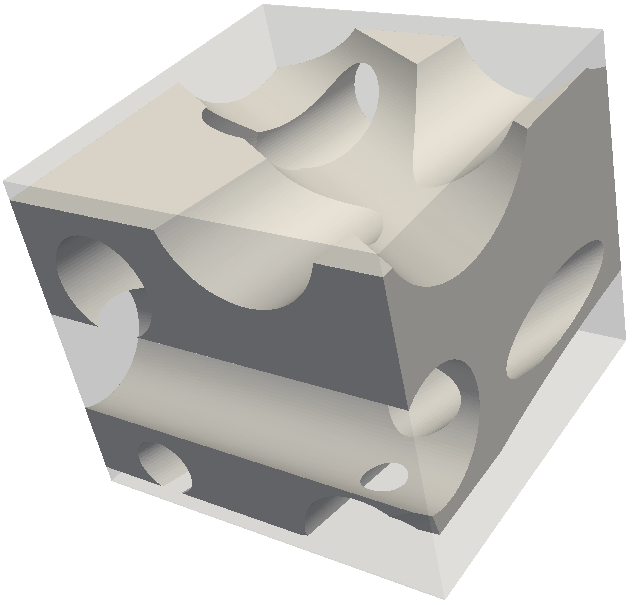}
		\caption{ True domain and true boundary.}
		\label{fig:spongefig1}
	\end{subfigure}
	\begin{subfigure}[hbt]{0.32\textwidth}\centering
		\includegraphics[width=0.95\linewidth]{\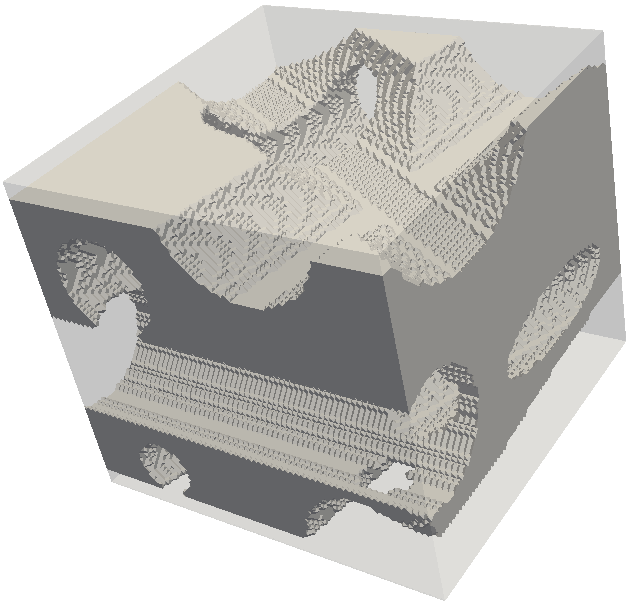}
		\caption{ Surrogate boundary (coarse grid).}
		\label{fig:spongefig3}
	\end{subfigure}
	\begin{subfigure}[hbt]{0.32\textwidth}\centering
		\includegraphics[width=0.95\linewidth]{\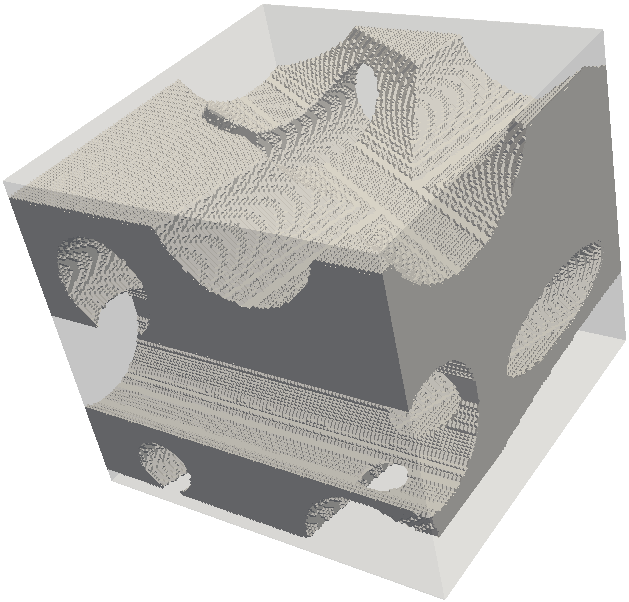}
		\caption{Surrogate boundary (fine grid).}
		\label{fig:spongefig4}
	\end{subfigure}
	\\[.5cm] 
	\begin{subfigure}[hbt]{0.36\textwidth}\centering
		\includegraphics[width=1\linewidth]{\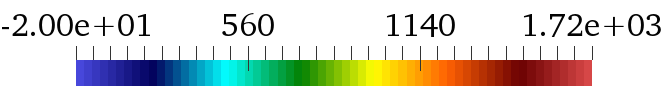}
	\end{subfigure}
\\[.1cm]
	\begin{subfigure}[hbt]{0.42\textwidth}\centering
		\includegraphics[width=1.0\linewidth]{\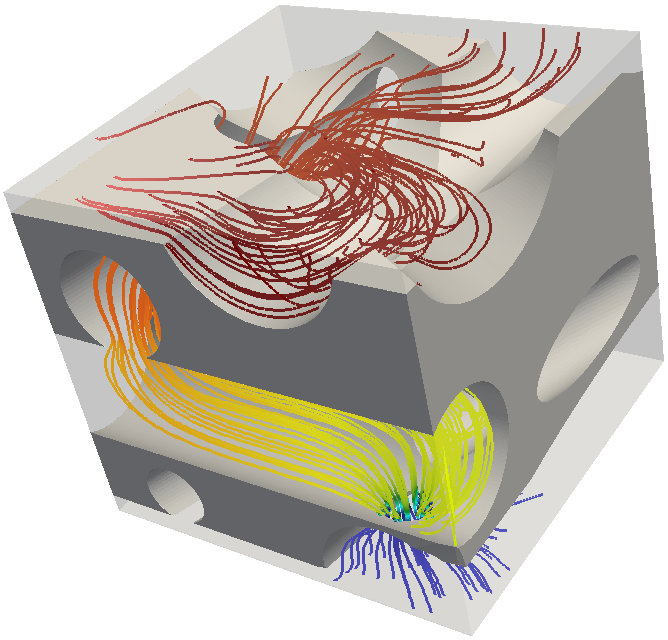}
		\caption{ Streamlines (colored with pressure) over the true geometry for the coarse grid.}
		\label{fig:spongefig2}
	\end{subfigure}
\qquad
\qquad
	\begin{subfigure}[hbt]{0.42\textwidth}\centering
		\includegraphics[width=1.0\linewidth]{\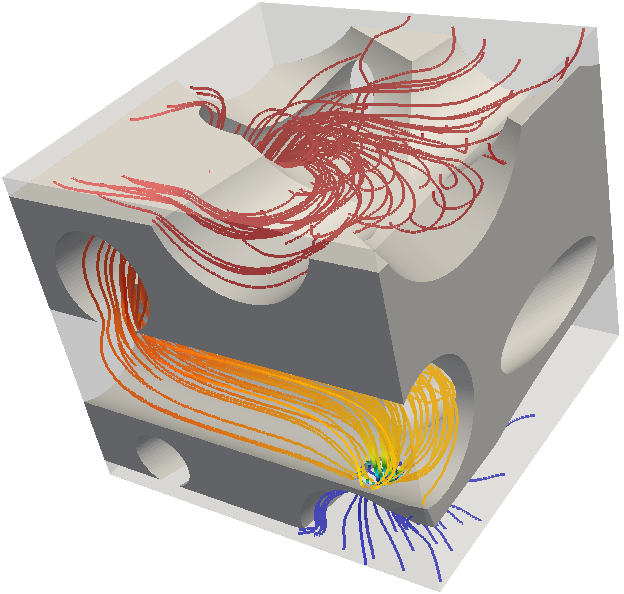}
		\caption{ Streamlines (colored with pressure) over the true geometry for the fine grid.}
		\label{fig:spongefig5}
	\end{subfigure}
	\caption{True and surrogate boundary of the sponge-like domain, with flow streamlines colored by pressure contours.}
	\label{fig:sponge_all}
\end{figure}
\begin{table}[hbt!]\centering
	\begin{tabular}{M{1.5cm} | M{3.0cm}  | M{6.0cm}  | M{5.6cm}  }
		\multicolumn{3}{c}{}  \\  
		Grid & No. of elements & No. of surrogate faces with $\bs{\nu} \cdot \bs{n} \leq 0$  & Percentage of total surrogate faces   \\
		\hline
		coarse & $\sim 7.5$ million & 234 & 0.196\%   \\
		fine & $\sim 25.2$ million & 577 & 0.189\%   \\
	\end{tabular}
	\caption{Number of surrogate faces with  $\bs{\nu} \cdot \bs{n} \leq 0$ for the coarse and fine grids shown in Figures~\ref{fig:spongefig3} and~\ref{fig:spongefig4}, respectively. \label{table4} }
\end{table}

\section{Summary \label{sec:summary} }
We have provided an enhanced analysis of well-posedness and accuracy for the SBM in the case of the Poisson and Stokes operators. The key improvement with respect to previous work are the removal of an assumption about the angle between the normal to the surrogate boundary and the corresponding normal to the true boundary. 
In addition, we have shown that no penalties on the tangential derivative of the Dirichlet boundary condition are required to prove stability and accuracy of the SBM. In addition, particular to the Stokes operator, incorporating an incompressibility constraint stabilization term was also deemed as unnecessary. Furthermore, in the enhanced $L^{2}$-error estimates, we removed the assumption of convexity of the surrogate domain $\tO$, relying instead on a conventional assumption of smoothness of the true domain $\Om$ (which could be replaced by convexity of the true domain $\Om$). These aspects are important in advancing the understanding and development of the SBM, in that they increase the flexibility and simplicity of the method, particularly when the geometry of the boundaries is highly complex. 
We have also performed a number of tests to verify the findings of our theoretical analysis.

\section*{Acknowledgments}
The support of the U.S. Department of Energy, Office of Science, Advanced Scientific Computing Research under Early Career Research Program Grant SC0012169 and the Army Research Office (ARO) under Grant W911NF-18-1-0308 is gratefully acknowledged. CC performed this research in the framework of the Italian MIUR Award ``Dipartimenti di Eccellenza 2018-2022" granted to the Department of Mathematical Sciences, Politecnico di Torino (CUP: E11G18000350001), and with the support of the Italian MIUR PRIN Project 201752HKH8-003. 
He is a member of the Italian INdAM-GNCS research group.

\section*{References}
\bibliographystyle{plain}
\bibliography{./SBM_2_0_arXiv} 
\appendix

\section{Some useful inequalities}
\label{sec:appendix_a}

Hereafter, we collect some well-known inequalities that are used in the paper.

\begin{thm}[Trace Theorem] \label{thm:TraceTheorem}
Assume $A \subset \mathbb{R}^{n_d}$ is open and bounded and $\partial A $ is Lipschitz. Then the trace operator $T : H^1(A) \to L^2(\partial A)$ such that $Tw = w_{|\partial A}$ satisfies
\begin{align}
	\| \, w \, \|_{L^{2}(\partial A) }^2  = \| \, Tw \, \|_{L^{2}(\partial A) }^2  \leq C \left( l(A)^{-1} \, \|\,w \, \|_{0,A}^2 + l(A) \, | \, w  \, |_{1,A}^2 \right), \quad \forall w \in H^1(A),
\end{align}
where $C$ is a constant that may depend on the shape of $A$  but not on its size, and $l(A)=\mathrm{meas}(A)^{1/n_d}$ is a characteristic length of the domain $A$.
\end{thm}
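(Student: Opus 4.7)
The plan is to derive the inequality by reducing to a reference configuration $\hat A$ of unit characteristic length and then scaling back to $A$. Fix a reference domain $\hat A$ with $l(\hat A)=1$ sharing the same shape as $A$, and let $\Phi:\hat A \to A$ denote the affine map $\Phi(\hat x) = l(A)\,\hat x$ (composed, if needed, with a rigid motion that identifies $\hat A$ with a translated/rotated copy of $l(A)^{-1}A$). Setting $\hat w = w\circ \Phi$, a standard change of variables yields
\begin{equation*}
\|w\|_{0,A}^2 = l(A)^{n_d}\|\hat w\|_{0,\hat A}^2, \qquad |w|_{1,A}^2 = l(A)^{n_d-2}|\hat w|_{1,\hat A}^2, \qquad \|w\|_{L^2(\partial A)}^2 = l(A)^{n_d-1}\|\hat w\|_{L^2(\partial \hat A)}^2.
\end{equation*}
It therefore suffices to establish the classical trace estimate $\|\hat w\|_{L^2(\partial \hat A)}^2 \le \hat C\bigl(\|\hat w\|_{0,\hat A}^2+|\hat w|_{1,\hat A}^2\bigr)$ on the reference domain, for then multiplying through by $l(A)^{n_d-1}$ and rewriting the right-hand side in terms of $w$ produces precisely the two powers $l(A)^{-1}$ and $l(A)$ claimed, with $C=\hat C$.

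To prove the reference estimate, I would first invoke the density of $C^1(\overline{\hat A})$ in $H^1(\hat A)$, which holds because $\partial \hat A$ is Lipschitz, and then argue by continuous extension after handling smooth functions. Introduce a finite open cover $\{U_i\}$ of $\partial \hat A$ subordinate to a partition of unity $\{\varphi_i\}$, chosen so that each $U_i\cap \hat A$ can be straightened, via a bi-Lipschitz chart, into a subset of the upper half-space with flat boundary. On each straightened patch, apply the fundamental theorem of calculus in the normal direction to $\varphi_i w$ together with the Cauchy--Schwarz inequality to bound the boundary $L^2$-norm by a combination of the volume $L^2$-norm and the $L^2$-norm of the gradient. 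Summing the local estimates and absorbing the bi-Lipschitz Jacobians together with the $L^\infty$-norms of the $\varphi_i$ into a single constant $\hat C$ completes the reference case.

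The main subtlety, rather than a genuine obstacle, lies in justifying that the resulting constant depends only on the shape of $A$ and not on its size. This relies precisely on the fact that $A$ is obtained from $\hat A$ through a pure dilation (possibly composed with an isometry), so that the local Lipschitz constants of the boundary charts, the quality of the partition of unity, and hence $\hat C$ itself, remain invariant under the rescaling induced by $\Phi$. Once this invariance is noted, the transfer back to $A$ via the scaling identities above is immediate and concludes the proof.
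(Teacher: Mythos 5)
The paper offers no proof of this statement: it is listed in the appendix among ``well-known inequalities'' and is simply invoked as the classical trace theorem, with the size-independence of the constant left implicit in the scaled form of the estimate. Your argument supplies exactly the standard justification that the paper takes for granted: the dilation $\Phi(\hat x)=l(A)\hat x$ gives the correct scalings $\|w\|_{0,A}^2=l(A)^{n_d}\|\hat w\|_{0,\hat A}^2$, $|w|_{1,A}^2=l(A)^{n_d-2}|\hat w|_{1,\hat A}^2$, $\|w\|_{L^2(\partial A)}^2=l(A)^{n_d-1}\|\hat w\|_{L^2(\partial \hat A)}^2$, which combine to produce precisely the exponents $l(A)^{-1}$ and $l(A)$ in the claim, and the reference estimate on $\hat A$ is the textbook trace inequality for Lipschitz domains (density of smooth functions, partition of unity, boundary flattening, fundamental theorem of calculus in the normal direction plus Cauchy--Schwarz). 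Your closing remark correctly identifies the one point that actually needs saying, namely that $\hat C$ is computed on the unit-size copy $\hat A$ and is therefore a function of the shape alone, which is exactly the meaning of the constant's dependence asserted in the theorem. The proof is correct and I see no gap.
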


Let $\ti{\mathcal{T}}^h$ be the regular triangulation introduced in Section \ref{sec:sbmDef}, and let $H^k(\tO,\ti{\mathcal{T}}^h)= \prod_{T \in \ti{\mathcal{T}}^h} H^k(T)$ be the `broken' Sobolev space of order $k \geq 0$ with semi-norm $| \, v \, |_{k,\tO, \ti{\mathcal{T}}^h}= \sum_{T \in \ti{\mathcal{T}}^h} |\, v \, |_{k,T}$.
For the sake of simplicity, here and in the rest of the paper we use the symbol $| \, hv \, |_{k,\tO, \ti{\mathcal{T}}^h}$ to indicate the scaled quantity $\sum_{T \in \ti{\mathcal{T}}^h} | \, h_T v \, |_{k,T}$.
The general trace theorem above can be particularized to functions belonging to such spaces as follows.
\begin{subequations}
\begin{thm}[Scaled trace inequalities] \label{lemma:BoundaryTraceIneq}
There exists a constant $c_I>0$ independent of the mesh size such that for any element $T \subset \tO$ with an edge $\gamma_T \subset \tG$ one has 
\begin{align}
\label{eq:TraceInequality0_eT}
 \| \, h_T^{1/2} \, w \, \|^2_{0, \gamma_T} 
 &\leq \;
 c_I \left(  \| \, w \, \|^2_{0,T} +  | \, h_T \, w \, |^2_{1,T}  \right) \, , \quad \forall w \in H^1(T) \; .
\end{align}
Summing over all the elements with at least one of their edges on the boundary $\tG$, we obtain
\begin{align}
\label{eq:TraceInequality0_G}
 \| \, h^{1/2} \, w \, \|^2_{0, \tG} 
 &\leq \;
 c_I \left(  \| \, w  \, \|^2_{0,\tO} + | \, h \, w \, |^2_{1,\tO, \ti{\mathcal{T}}^h}  \right) \, , \quad \forall w \in H^1(\tO,\ti{\mathcal{T}}^h) \;.
\end{align}
\end{thm}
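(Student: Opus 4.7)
The plan is to prove the local inequality \eqref{eq:TraceInequality0_eT} via a standard scaling argument on a reference element, and then obtain the global inequality \eqref{eq:TraceInequality0_G} by summation over boundary elements, relying on shape-regularity of the family $\{\ti{\mathcal{T}}^h\}$.

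First, I would fix a reference simplex $\hat{T}$ of unit size and invoke Theorem~\ref{thm:TraceTheorem} on $\hat{T}$: for any $\hat{w} \in H^1(\hat{T})$ and any face $\hat{\gamma} \subset \partial \hat{T}$,
\[
\|\hat{w}\|^2_{0,\hat{\gamma}} \leq \hat{C}\left(\|\hat{w}\|^2_{0,\hat{T}} + |\hat{w}|^2_{1,\hat{T}}\right),
\]
where $\hat{C}$ depends only on $\hat{T}$ (since $l(\hat{T}) \sim 1$ the two right-hand side scalings in Theorem~\ref{thm:TraceTheorem} coalesce into the standard form). For an arbitrary element $T \in \ti{\mathcal{T}}^h$ with an edge $\gamma_T \subset \tG$, I would introduce the affine bijection $F_T: \hat{T} \to T$, set $\hat{w} = w \circ F_T$, and use the classical change-of-variable estimates
\[
\|w\|^2_{0,\gamma_T} \leq C\, h_T^{n_d - 1}\, \|\hat{w}\|^2_{0,\hat{\gamma}},\qquad \|\hat{w}\|^2_{0,\hat{T}} \leq C\, h_T^{-n_d}\, \|w\|^2_{0,T},\qquad |\hat{w}|^2_{1,\hat{T}} \leq C\, h_T^{2-n_d}\, |w|^2_{1,T},
\]
where the constants depend only on the shape-regularity constant of the family. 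Substituting these into the reference-element inequality and multiplying through by $h_T$ produces exactly \eqref{eq:TraceInequality0_eT}.

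For the global inequality \eqref{eq:TraceInequality0_G}, I would sum \eqref{eq:TraceInequality0_eT} over all elements $T \in \ti{\mathcal{T}}^h$ that possess at least one edge contained in $\tG$. Since each boundary edge belongs to exactly one element and the number of faces of a single simplex is bounded by $n_d+1$, the summation yields the stated bound with a constant of the same order as $c_I$; the right-hand side is naturally controlled by $\|w\|^2_{0,\tO}$ and the broken semi-norm $|hw|^2_{1,\tO,\ti{\mathcal{T}}^h}$.

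The only subtlety, and certainly the ``main obstacle'' worth stating explicitly, is uniformity of the constant $c_I$ with respect to both the mesh size and the individual element. This is guaranteed by the shape-regularity assumption, which bounds the norms of $F_T$ and $F_T^{-1}$ (and hence the relevant Jacobian determinants) in terms of $h_T$ with constants independent of $T$ and $h$. Beyond this standard technicality, the argument is a textbook scaling estimate and presents no substantial difficulty.
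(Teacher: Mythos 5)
Your proof is correct and follows essentially the same route as the paper, which states no explicit proof but simply notes that the result is obtained by particularizing the scaled Trace Theorem (Theorem~\ref{thm:TraceTheorem}) to each boundary element with $l(T)\sim h_T$; your reference-element scaling argument is the standard way to make that one-line claim rigorous, with the shape-regularity of the triangulation guaranteeing uniformity of $c_I$. The scaling exponents and the summation step are all in order.
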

Combining these inequalities component-wise, one gets analogous results for vector- or tensor-valued functions.
\begin{thm}[Scaled vector/tensor trace inequalities] 
\label{theo:BoundaryTraceIneq}
There exists a constant $C_I>0$ independent of the mesh size such that 
\begin{align}
\label{eq:TraceInequality1_e}
\| \, h_{T} \, \nabla w  \cdot \bs{\nu} \, \|^2_{0, \gamma_T} 
& \leq \;
C_I \left( | \, w \, |^2_{1,T} +  | \, {h_T} \, w \, |^2_{2,T} \right) \, , \quad \ \ \forall w \in H^2(T) \; , 
\\ 
\label{eq:TraceInequality1_G}
\| \, h^{1/2} \, \nabla w  \cdot \bs{\nu} \, \|^2_{0, \tG} 
&\leq \;
C_I \left( | \, w \, |^2_{1,\tO} +  | \, {h} \, w \, |^2_{2,\tO, \ti{\mathcal{T}}^h} \right) \, , \quad \ \ \forall w \in H^1(\tO)\cap H^2(\tO,\ti{\mathcal{T}}^h) \; , 
\\  
\label{eq:StrainTraceInequality0}
\| \, h^{1/2} \, \bs{\varepsilon} (\bs{w}) \ti{\bs{n}}\,  \|_{0, \tG}^2 
&\leq \;
C_{I} \left( \|  \, \bs{\varepsilon}(\bs{w}) \, \|^2_{0,\tO} +  |\, h \,  \bs{\varepsilon}(\bs{w}) \, |^2_{1,\tO, \ti{\mathcal{T}}^h} \right) 
\nonumber \\
&\leq \;
C_{I} \left( | \, \bs{w} \, |^2_{1,\tO} + | \, h \, \bs{w} \, |^2_{2,\tO, \ti{\mathcal{T}}^h} \right) \, , \quad \ \ \forall \bs{w} \in (H^1(\tO)\cap H^2(\tO,\ti{\mathcal{T}}^h) )^{n_d}
\; .
\end{align}
In \eqref{eq:TraceInequality1_e} and \eqref{eq:TraceInequality1_G}, $\bs{\nu}$ denotes any unit vector field defined on the boundary.
\end{thm}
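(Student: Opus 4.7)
\textbf{Proof proposal for Theorem \ref{theo:BoundaryTraceIneq}.}

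The plan is to obtain all three scaled vector/tensor trace inequalities directly from the scalar scaled trace inequalities of Theorem~\ref{lemma:BoundaryTraceIneq} (equations \eqref{eq:TraceInequality0_eT}--\eqref{eq:TraceInequality0_G}), applied componentwise to either $\nabla w$ or $\bs{\varepsilon}(\bs{w})$. The unit-vector hypothesis $|\bs{\nu}|=1$ is used to absorb the pointwise projection on the boundary via Cauchy--Schwarz, producing no mesh-dependent constants.

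First, for \eqref{eq:TraceInequality1_e}, I would fix $T \in \ti{\mathcal{T}}_h$ with boundary edge/face $\gamma_T \subset \tG$. Since $|\bs{\nu}| = 1$ pointwise on $\gamma_T$, one has the pointwise estimate $|\nabla w \cdot \bs{\nu}|^2 \leq |\nabla w|^2 = \sum_{i=1}^{n_d} |\partial_i w|^2$, and therefore
\begin{equation}
\| \, h_T^{1/2} \, \nabla w \cdot \bs{\nu} \, \|^2_{0,\gamma_T}
\;\leq\; \sum_{i=1}^{n_d} \| \, h_T^{1/2} \, \partial_i w \, \|^2_{0,\gamma_T}.
\end{equation}
Since $w \in H^2(T)$, each component $\partial_i w$ lies in $H^1(T)$, so \eqref{eq:TraceInequality0_eT} applied to $\partial_i w$ gives $\|h_T^{1/2} \partial_i w\|^2_{0,\gamma_T} \leq c_I(\|\partial_i w\|^2_{0,T} + |h_T \partial_i w|^2_{1,T})$. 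Summing over $i=1,\dots,n_d$ and recognizing $\sum_i \|\partial_i w\|^2_{0,T} = |w|^2_{1,T}$ and $\sum_i |h_T \partial_i w|^2_{1,T} = |h_T w|^2_{2,T}$ yields \eqref{eq:TraceInequality1_e} with $C_I = c_I$.

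Next, for \eqref{eq:TraceInequality1_G}, I would simply sum \eqref{eq:TraceInequality1_e} over all $T \in \ti{\mathcal{T}}_h$ that carry at least one edge/face on $\tG$, keeping in mind $h_{|T} = h_T$. The element-wise $H^1$ and $H^2$ contributions accumulate into the global (broken) seminorms on $\tO$; since by assumption $w \in H^1(\tO)$, the first-order seminorm is genuinely global, while the second-order seminorm is of broken type, as stated. For \eqref{eq:StrainTraceInequality0}, the same strategy applies entry by entry: each scalar component $\varepsilon_{ij}(\bs{w}) = \tfrac{1}{2}(\partial_j w_i + \partial_i w_j)$ lies in $H^1(T)$ when $\bs{w} \in H^2(T)^{n_d}$, and $|\bs{\varepsilon}(\bs{w})\ti{\bs{n}}|^2 \leq |\bs{\varepsilon}(\bs{w})|^2$ pointwise. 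Applying \eqref{eq:TraceInequality0_G} to each $\varepsilon_{ij}(\bs{w})$ and summing the $n_d^2$ resulting inequalities produces the first line of \eqref{eq:StrainTraceInequality0} in terms of $\|\bs{\varepsilon}(\bs{w})\|_{0,\tO}$ and $|h\,\bs{\varepsilon}(\bs{w})|_{1,\tO,\ti{\mathcal{T}}_h}$. The second line then follows from the pointwise algebraic bounds $|\bs{\varepsilon}(\bs{w})|^2 \leq |\nabla \bs{w}|^2$ and, for the higher-order term, $|\bs{\varepsilon}(\bs{w})|_{1,T} \leq |\bs{w}|_{2,T}$, both consequences of the triangle inequality applied to the definition $\bs{\varepsilon}(\bs{w}) = \tfrac{1}{2}(\nabla \bs{w} + \nabla \bs{w}^T)$.

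The main technical care, rather than a genuine obstacle, will be bookkeeping: (i) verifying that the scaling powers of $h_T$ match on both sides of each estimate (noting that the first estimate \eqref{eq:TraceInequality1_e} should be understood with $h_T^{1/2}$ multiplier, consistent with \eqref{eq:TraceInequality1_G}); (ii) ensuring the pointwise bound $|\nabla w \cdot \bs{\nu}| \le |\nabla w|$ holds irrespective of the regularity of the unit vector field $\bs{\nu}$ on the boundary; and (iii) making explicit that the constants in \eqref{eq:TraceInequality1_G} and \eqref{eq:StrainTraceInequality0} depend on the shape-regularity constant of $\ti{\mathcal{T}}_h$ through $c_I$, but not on its size, since the scalar result \eqref{eq:TraceInequality0_G} already has this property. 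No further tools beyond Theorem~\ref{lemma:BoundaryTraceIneq}, Cauchy--Schwarz and the triangle inequality are required.
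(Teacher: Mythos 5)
Your proof is correct and follows exactly the route the paper intends: the paper gives no detailed argument for this theorem, only the remark that the vector/tensor inequalities follow by combining the scalar scaled trace inequalities of Theorem~\ref{lemma:BoundaryTraceIneq} component-wise, which is precisely what you carry out via the pointwise bounds $|\nabla w\cdot\bs{\nu}|\le|\nabla w|$ and $|\bs{\varepsilon}(\bs{w})\ti{\bs{n}}|\le|\bs{\varepsilon}(\bs{w})|$ followed by summation over components and elements. Your side observation that the multiplier in \eqref{eq:TraceInequality1_e} should read $h_T^{1/2}$ rather than $h_T$, for consistency with the scalar estimate \eqref{eq:TraceInequality0_eT} and with \eqref{eq:TraceInequality1_G}, is also correct.
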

\end{subequations}
Using the equivalence of norms in a finite dimensional space, we obtain the following trace inequalities for piecewise affine functions.
\begin{subequations}
\begin{thm}[Discrete trace inequalities]
\label{theo:discretetrace}
There exist constants $c_I, C_I >0$ independent of the mesh size, such that
\begin{align}
\label{eq:TraceInequality0_e}
 \| \sqrt{h_T} \, w \, \|^2_{0; \gamma_T} 
 &\leq \;
 c_I \, \| \, w \, \|^2_{0,T}  \, , \quad \forall w \in \mathcal{P}^1(T)  \; ,
 \end{align}
and, for all vector functions $\bs{w}^h$ belonging to the space of piecewise linear and globally continuous functions over the mesh $\ti{\mathcal{T}}$,
\begin{align}
\label{eq:TraceInequality1} 
\| \, h^{1/2} \, \nabla \bs{w}^{h} \bs{\nu}  \, \|^2_{0, \tG} 
  &\leq \;
C_I  \, \| \, \nabla \bs{w}^{h} \, \|^2_{0,\tO}  \;, 
 \\
\| \sqrt{h} \, \nabla \cdot \bs{w}^{h} \, \|^2_{0, \tG} 
&\leq \;
C_I  \, \| \, \nabla \cdot \bs{w}^{h} \, \|^2_{0,\tO} \;, 
 \\
\| \, h^{1/2} \, \bs{\varepsilon} (\bs{w}^{h}) \ti{\bs{n}} \, \|_{0, \tG}^2 
&\leq  \;
C_{I} \, \| \, \bs{\varepsilon}(\bs{w}^{h}) \, \|^2_{0,\tO} \;.
\end{align}
In the second inequality, $\bs{\nu}$ denotes any unit vector field defined on the boundary.
\end{thm}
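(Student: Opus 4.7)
The plan is to reduce each of the four inequalities to the continuous scaled trace inequalities of Theorem~\ref{lemma:BoundaryTraceIneq} already proved in the appendix, exploiting the fact that higher derivatives of a piecewise linear function vanish identically on each element.

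First I would prove \eqref{eq:TraceInequality0_e}. Inequality \eqref{eq:TraceInequality0_eT} gives $\|h_T^{1/2}w\|_{0,\gamma_T}^2\le c_I(\|w\|_{0,T}^2+|h_T w|_{1,T}^2)$ for any $w\in H^1(T)$. For $w\in\mathcal{P}^1(T)$, a classical inverse inequality --- obtained by mapping to the reference element $\hat{T}$, invoking norm equivalence on the finite-dimensional space $\mathcal{P}^1(\hat{T})$, and transporting back through the affine map $F_T:\hat{T}\to T$ whose Jacobian is controlled by shape-regularity --- yields $|h_T w|_{1,T}\le C\,\|w\|_{0,T}$ with $C$ independent of $h_T$. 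Absorbing this term completes the first estimate.

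For the three vector/tensor bounds I would observe that, since $\bs{w}^h$ is piecewise affine on $\ti{\mathcal{T}}_h$, each of $\nabla\bs{w}^h\,\bs{\nu}$, $\nabla\cdot\bs{w}^h$ and $\bs{\varepsilon}(\bs{w}^h)\,\ti{\bs{n}}$ is piecewise \emph{constant}. Applying the scalar scaled trace inequality \eqref{eq:TraceInequality0_G} componentwise to each such quantity, the broken semi-norm term $|h\,(\,\cdot\,)|_{1,\tO,\ti{\mathcal{T}}_h}$ on the right-hand side vanishes identically, leaving $\|h^{1/2}\,(\,\cdot\,)\|_{0,\tG}^2\le c_I\,\|\,\cdot\,\|_{0,\tO}^2$. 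This is precisely the stated bound, after noting the pointwise dominations $|\nabla\bs{w}^h\,\bs{\nu}|\le \|\nabla\bs{w}^h\|$ and $|\bs{\varepsilon}(\bs{w}^h)\,\ti{\bs{n}}|\le \|\bs{\varepsilon}(\bs{w}^h)\|$, valid since $|\bs{\nu}|=|\ti{\bs{n}}|=1$; the divergence estimate follows by taking the scalar $\nabla\cdot\bs{w}^h$ in the scalar inequality directly.

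The main obstacle --- and it is a mild one --- is the bookkeeping needed to verify uniform independence from $h$ of the inverse-inequality constant in Step~1 and of the constants hidden in \eqref{eq:TraceInequality0_eT}--\eqref{eq:TraceInequality0_G}. Both are secured by the shape-regularity of $\ti{\mathcal{T}}_h$, which guarantees that the reference-to-physical affine maps have a condition number uniformly bounded across the family. No further analytic content is required: the theorem is essentially a corollary of the continuous trace inequalities in the appendix, combined with the elementary fact that the $k$-th derivative annihilates $\mathcal{P}^{k-1}$.
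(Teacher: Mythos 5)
Your argument is correct and is essentially the route the paper itself takes: the paper justifies this theorem only with the one-line remark that it follows from the continuous scaled trace inequalities by ``equivalence of norms in a finite dimensional space,'' and your proof is precisely the standard fleshing-out of that remark (inverse inequality on $\mathcal{P}^1(T)$ for the scalar bound, vanishing of the broken $H^1$ term for the piecewise-constant gradient quantities, plus pointwise domination by the unit vectors $\bs{\nu}$ and $\ti{\bs{n}}$). No gaps; the only cosmetic point is that the pointwise domination should be invoked \emph{before} applying the trace inequality, since $\bs{\nu}$ and $\ti{\bs{n}}$ live only on the boundary while the trace inequality must be applied to functions defined on the whole element.
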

\end{subequations}
Next, we recall two classical Poincar\'{e}-type inequalities~\cite{arnold1982interior,arnold2002unified} below:
\begin{subequations}
\begin{thm}[Poincar\'{e} inequality]
\label{lemma:Poincare}
Assume that $\tO$ is a bounded connected open subset of $\mathbb{R}^{n_{d}}$ with Lipschitz boundary $\partial \tO$. 
There exists a constant $C_P > 0$, depending only on $\tO$ (and in particular, independent of $h$), such that for all $u \in H^1(\tO)$ 
\begin{equation}
\label{eq:Poincare1}
 \| \, u \, \|_{0, \tO} \leq C_P \, l(\tO) \, \left( \| \, \nabla u \, \|_{0,\tO} + \| \, h^{-1/2} \, u \, \|_{0, \tG} \right) \; . 
\end{equation}
\end{thm}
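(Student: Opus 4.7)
The plan is to reduce the stated inequality to the classical Poincaré--Friedrichs inequality plus a harmless bound on the boundary term. The classical Poincaré--Friedrichs inequality (obtained e.g.\ by a Rellich--Kondrachov compactness/contradiction argument applied on a reference domain, then transferred to $\tO$ via the usual scaling $\hat{u}(\hat{x})=u(l(\tO)\hat{x})$) asserts that for any $u\in H^1(\tO)$
\begin{equation*}
\| \, u \, \|_{0,\tO}^2 \;\leq\; C_{PF}\left( l(\tO)^2 \, \| \, \nabla u \, \|_{0,\tO}^2 + l(\tO)\, \| \, u \, \|_{0,\tG}^2 \right),
\end{equation*}
with a constant $C_{PF}$ that depends only on the shape of $\tO$ (not on its size, and in particular not on $h$). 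Here I am using that $\tG=\partial\tO$ by construction of the surrogate domain.

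Next I bound the boundary term in the classical inequality by the mesh-weighted quantity that appears in the statement. Since $h$ is piecewise constant on the edges/faces of $\tG$ with $h_{|T}=h_T\leq h_{\tG}$ for every boundary element $T$, I can write
\begin{equation*}
\| \, u \, \|_{0,\tG}^2 \;=\; \int_{\tG} h\,\bigl(h^{-1}u^2\bigr)\,ds \;\leq\; h_{\tG}\,\| \, h^{-1/2}\,u \, \|_{0,\tG}^2 .
\end{equation*}
Under the standing mesh assumption that $h_{\tG}\leq l(\tO)$ (the mesh size is at most the diameter of the surrogate domain), this gives $l(\tO)\,\| u \|_{0,\tG}^2 \leq l(\tO)^2\,\| h^{-1/2}u\|_{0,\tG}^2$.

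Substituting into the classical Poincaré--Friedrichs bound yields
\begin{equation*}
\| \, u \, \|_{0,\tO}^2 \;\leq\; C_{PF}\, l(\tO)^2 \left( \| \, \nabla u \, \|_{0,\tO}^2 + \| \, h^{-1/2}\,u \, \|_{0,\tG}^2 \right),
\end{equation*}
and taking square roots followed by the elementary inequality $\sqrt{a^2+b^2}\leq a+b$ produces the stated estimate with $C_P=\sqrt{C_{PF}}$, which indeed depends only on the shape of $\tO$ and is independent of $h$.

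The only nontrivial ingredient is the classical Poincaré--Friedrichs inequality itself, and in particular verifying that its constant can be taken to depend only on the shape of $\tO$ and to scale correctly with $l(\tO)$; this is the main obstacle, but it is standard and follows from a compactness argument on a normalized reference domain combined with the change-of-variables $\hat{u}(\hat{x})=u(l(\tO)\hat{x})$, which multiplies both sides of the inequality by the same power of $l(\tO)$. The remaining steps are purely algebraic manipulations of the boundary term.
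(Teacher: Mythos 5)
Your proposal is correct. Note that the paper itself offers no proof of this statement: it is recalled in the appendix as a classical Poincar\'e-type inequality with a citation to the literature, so there is no argument to compare against line by line. Your derivation is a legitimate way to supply the missing details: the classical Poincar\'e--Friedrichs inequality $\| u \|_{0,\tO}^2 \leq C_{PF}\bigl( l(\tO)^2 \| \nabla u \|_{0,\tO}^2 + l(\tO) \| u \|_{0,\tG}^2 \bigr)$ (compactness on a reference domain plus the scaling $\hat{u}(\hat{x})=u(l(\tO)\hat{x})$, which indeed produces the correct powers of $l(\tO)$ on each term), followed by the pointwise bound $h\leq h_{\tG}\leq l(\tO)$ on the boundary, which converts $l(\tO)\,\| u \|_{0,\tG}^2$ into $l(\tO)^2\,\| h^{-1/2} u \|_{0,\tG}^2$; the algebra at the end is fine. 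The one caveat, which applies equally to the paper's own formulation (``depending only on $\tO$ \dots independent of $h$''), is that the surrogate domain $\tO$ itself varies with $h$, so strictly speaking one needs the Poincar\'e--Friedrichs constants of the normalized surrogate domains to be uniformly bounded over the mesh family; this is plausible since the $\tO$ converge to $\Om$ and are uniformly Lipschitz for shape-regular triangulations, but it is an implicit assumption rather than a consequence of your compactness argument on a single fixed domain. With that understanding, the proof is sound and arguably more informative than the bare citation in the paper.
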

An alternative version of the Poincar\'{e} inequality holds for functions of bounded average.
\begin{thm}[Poincar\'{e} inequality for functions of bounded average]
\label{lemma:Poincare2}
Assume that $\tO$ is a bounded connected open subset of $\mathbb{R}^{n_{d}}$ with Lipschitz boundary $\partial \tO$. 
There exists a constant $\exists C_P' > 0$, depending only on $\tO$ (and in particular, independent of $h$), such that for all $p \in H^1(\tO)$ satisfying $\int_{\tO} p =0$, one has 
\begin{equation}
\label{eq:Poincare2}
 \| \, p \, \|_{0, \tO} \leq C_P' \, l(\tO) \, \| \, \nabla p \, \|_{0,\tO} \; .
\end{equation}
\end{thm}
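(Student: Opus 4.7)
The plan is the classical proof: a compactness argument based on Rellich--Kondrachov on a rescaled reference domain of unit characteristic length, followed by a dilation to recover the explicit dependence on $l(\tO)$. Setting $\hat{\tO}:=l(\tO)^{-1}\tO$, so that $l(\hat{\tO})=1$, the first task is to establish $\|\hat{p}\|_{0,\hat{\tO}}\le \hat{C}\,\|\hat{\nabla}\hat{p}\|_{0,\hat{\tO}}$ for every $\hat{p}\in H^1(\hat{\tO})$ with $\int_{\hat{\tO}}\hat{p}=0$. I would argue by contradiction: suppose not, so that there exists a sequence $(\hat{p}_n)$ with $\int_{\hat{\tO}}\hat{p}_n=0$, $\|\hat{p}_n\|_{0,\hat{\tO}}=1$, and $\|\hat{\nabla}\hat{p}_n\|_{0,\hat{\tO}}<1/n$. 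Then $(\hat{p}_n)$ is bounded in $H^1(\hat{\tO})$; since $\hat{\tO}$ is a bounded Lipschitz domain, Rellich--Kondrachov provides a subsequence converging strongly in $L^2(\hat{\tO})$ and weakly in $H^1(\hat{\tO})$ to a limit $\hat{p}$. Strong $L^2$ convergence preserves $\|\hat{p}\|_{0,\hat{\tO}}=1$ and $\int_{\hat{\tO}}\hat{p}=0$, while weak $H^1$ convergence together with lower semicontinuity of the seminorm gives $\|\hat{\nabla}\hat{p}\|_{0,\hat{\tO}}\le\liminf_n\|\hat{\nabla}\hat{p}_n\|_{0,\hat{\tO}}=0$. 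Connectedness of $\hat{\tO}$ then forces $\hat{p}$ to be constant, and the zero-mean constraint forces $\hat{p}\equiv 0$, contradicting $\|\hat{p}\|_{0,\hat{\tO}}=1$.

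Once the reference inequality is in hand, I would pull it back to $\tO$ via $x=l(\tO)\hat{x}$ and $\hat{p}(\hat{x})=p(l(\tO)\hat{x})$. The zero-mean condition is preserved under this affine change of variables, and the standard scaling identities give $\|p\|_{0,\tO}^2=l(\tO)^{n_d}\|\hat{p}\|_{0,\hat{\tO}}^2$ and $\|\nabla p\|_{0,\tO}^2=l(\tO)^{n_d-2}\|\hat{\nabla}\hat{p}\|_{0,\hat{\tO}}^2$. Inserting these relations into the reference inequality yields $\|p\|_{0,\tO}\le\hat{C}\,l(\tO)\,\|\nabla p\|_{0,\tO}$, which is exactly \eqref{eq:Poincare2} with $C_P':=\hat{C}$.

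The main obstacle, and the real content of the parenthetical remark ``independent of $h$'' in the statement, is to verify that the constant $C_P'$ does not degenerate as the mesh is refined. Since the surrogate domain $\tO_h$ changes with $h$, the rescaled reference domain $\hat{\tO}_h$ changes as well, and one must control $\hat{C}(\hat{\tO}_h)$ uniformly in $h$. Under Assumption~\ref{ass:d_asym}, $\tO_h$ approaches $\Om$ in the Hausdorff sense at rate $o(h)$, and standard domain-continuity results for the Poincar\'e constant give boundedness (in fact convergence to the limiting constant $\hat{C}(\hat{\Om})$). Alternatively, the uniformity can be obtained by running the Rellich--Kondrachov argument directly on the joint sequence $(\hat{p}_n,\hat{\tO}_{h_n})$, extracting a limit supported on $\hat{\Om}$ and reaching the same contradiction there. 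This uniformity step is what distinguishes the statement from the textbook Poincar\'e inequality and is where the bulk of the technical care would have to go in a fully rigorous write-up.
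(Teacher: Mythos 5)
Your argument is the standard one and is correct: the compactness (Rellich--Kondrachov) contradiction argument on a unit-scale reference domain, followed by the dilation $x = l(\tO)\hat{x}$ to expose the factor $l(\tO)$, is exactly how this inequality is classically established. Note, however, that the paper offers no proof at all for this statement --- it is recalled in the appendix as a known result with citations --- so there is no ``paper's route'' to compare against. The one place where your write-up goes beyond the textbook argument is also the one place it gets vague: the uniformity of $C_P'$ over the $h$-dependent family of surrogate domains $\tO$. You correctly identify this as the real content of the phrase ``independent of $h$,'' but ``standard domain-continuity results'' is doing a lot of work there; to make it rigorous one would want to invoke the uniform Lipschitz (or uniform cone) character of the surrogate boundaries, which follows from shape-regularity of the triangulations, so that either a uniform extension operator or your joint-sequence compactness argument goes through. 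Since the paper itself leaves this point entirely implicit, your proposal is, if anything, more careful than the source.
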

\end{subequations}
Finally, we recall two inequalities of the Korn type for $H^1$-vector fields.
\begin{thm}[Korn's inequalities]
\label{lemma:Korn}
Let $\tO$ be a domain in $\mathbb{R}^{n_{d}}$ with ${n_{d}}\geq2$.
There exists a constant $C_K > 0$ such that for all $\bs{u} \in H^1(\tO)^{n_d}$, 
\begin{subequations}
\begin{equation}
\label{eq:korn1}
 \| \, \bs{u} \, \|_{H^1(\tO)}^2 \leq C_K \left(  \| \, \bs{u} \, \|_{0,\tO}^2 + l(\tO)^2 \, \| \,  \bs{\varepsilon}(\bs{u}) \, \|_{0,\tO}^2 \right)
\; .
\end{equation}
Furthermore, if $\ti{\Gamma} \subseteq \partial \tO$ has positive $(n_d-1)$-dimensional measure, there exists a constant $\bar{C}_K > 0$ independent of any $h \leq l(\tO)$ such that for all $\bs{u} \in H^1(\tO)^{n_d}$, 
\begin{equation}
\label{eq:korn2}
 \| \, \bs{u} \, \|_{H^1(\tO)}^2 \leq \bar{C}_K \, l(\tO)^2 \, \left(  \| \, h^{-1/2} \, \bs{u} \, \|_{0,\tG}^2 +  \| \, \bs{\varepsilon}(\bs{u}) \, \|_{0,\tO}^2 \right)
\; .
\end{equation}
\end{subequations}
\end{thm}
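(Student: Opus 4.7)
The plan is to reduce both inequalities to their classical, dimensionless counterparts on a reference configuration of unit size, via a standard rescaling argument that tracks the length $l(\tO)$, and then, for the second inequality, to exploit the condition $h\leq l(\tO)$ to replace the weighted boundary norm by an unweighted one.

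For the first inequality, I would introduce the rescaled domain $\hat{\tO}=l(\tO)^{-1}\tO$ (so that $l(\hat{\tO})=1$) and the change of variables $\hat{\bs{u}}(\hat{\bs{x}})=\bs{u}(l(\tO)\hat{\bs{x}})$. A direct computation shows $\|\hat{\bs{u}}\|_{0,\hat{\tO}}^2 = l(\tO)^{-n_d}\|\bs{u}\|_{0,\tO}^2$, $\|\nabla_{\hat{\bs{x}}}\hat{\bs{u}}\|_{0,\hat{\tO}}^2 = l(\tO)^{2-n_d}\|\nabla\bs{u}\|_{0,\tO}^2$, and analogously for $\bs{\varepsilon}(\hat{\bs{u}})$. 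On $\hat{\tO}$, the classical Korn second inequality (provable by Lions' lemma or Ne\v{c}as' inequality applied componentwise to $\partial_i u_j + \partial_j u_i$) yields $\|\hat{\bs{u}}\|_{0,\hat{\tO}}^2 + \|\nabla_{\hat{\bs{x}}}\hat{\bs{u}}\|_{0,\hat{\tO}}^2 \leq C(\hat{\tO})\,(\|\hat{\bs{u}}\|_{0,\hat{\tO}}^2 + \|\bs{\varepsilon}(\hat{\bs{u}})\|_{0,\hat{\tO}}^2)$. Undoing the scaling and absorbing the factor $l(\tO)^{-n_d}$ on both sides gives \eqref{eq:korn1}, with $C_K$ depending only on the shape of $\tO$ (not its size).

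For the second inequality I would first establish its ``unweighted'' classical variant, namely
\begin{equation*}
\|\bs{u}\|_{0,\tO}^2 + l(\tO)^2\,\|\nabla\bs{u}\|_{0,\tO}^2 \,\leq\, C\,\bigl(l(\tO)\,\|\bs{u}\|_{0,\ti{\Gamma}}^2 + l(\tO)^2\,\|\bs{\varepsilon}(\bs{u})\|_{0,\tO}^2\bigr),
\end{equation*}
proved by a standard Lions--Rellich compactness contradiction argument on the rescaled reference domain: if no such $C$ existed one would obtain a sequence $\hat{\bs{u}}_n$ with unit $H^1$-norm, vanishing strain and vanishing trace on $\ti{\Gamma}$; up to a subsequence it would converge strongly in $L^2$ and weakly in $H^1$ to a rigid motion (since $\bs{\varepsilon}(\hat{\bs{u}})=0$ characterizes rigid motions) whose trace vanishes on a set of positive surface measure, forcing it to be identically zero -- a contradiction. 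Then, using $h\leq l(\tO)$ pointwise on $\tG$, one has $h^{-1}\geq l(\tO)^{-1}$, hence $l(\tO)\,\|\bs{u}\|_{0,\tG}^2 \leq l(\tO)^2\,\|h^{-1/2}\bs{u}\|_{0,\tG}^2$. Substituting this bound into the unweighted inequality (applied with $\ti{\Gamma}=\tG$) yields \eqref{eq:korn2}, with $\bar C_K$ inheriting independence from $h$ since $h$ only enters through an inequality that goes the favorable direction.

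The one delicate step is the compactness proof of the unweighted trace-based Korn inequality on the reference domain, because one must know that the set of rigid motions is finite-dimensional and that the trace operator $H^1(\hat{\tO})^{n_d}\to L^2(\ti{\Gamma})^{n_d}$ is continuous and injective when restricted to rigid motions whose trace vanishes on $\ti{\Gamma}$ (of positive $(n_d-1)$-measure). All other steps -- the rescaling, the identification of $\ker\bs{\varepsilon}$ with rigid motions, and the substitution $h^{-1/2}\geq l(\tO)^{-1/2}$ -- are routine. Because these Korn-type results are classical, I would present them mainly as a careful bookkeeping of scaling factors, citing standard references for the underlying compactness arguments.
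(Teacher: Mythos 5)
The paper does not actually prove Theorem~\ref{lemma:Korn}: it is introduced with ``we recall two inequalities of the Korn type'' and stated without proof, as a classical fact imported from the literature. Your argument is therefore not competing with an in-paper proof but supplying one, and it is essentially the standard one: a dilation to a unit-size reference configuration to make the powers of $l(\tO)$ explicit, the classical (second) Korn inequality for \eqref{eq:korn1}, a Rellich compactness/contradiction argument using the finite-dimensionality of $\ker\bs{\varepsilon}$ (rigid motions) and the positive surface measure of $\ti{\Gamma}$ for the trace-based variant, and finally the pointwise bound $h^{-1}\geq l(\tO)^{-1}$ to pass from the unweighted boundary term to the $h^{-1/2}$-weighted one in \eqref{eq:korn2}. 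The scaling bookkeeping ($l(\tO)^{-n_d}$ for volume $L^2$ norms, $l(\tO)^{2-n_d}$ for gradients, $l(\tO)^{-(n_d-1)}$ for boundary terms) is correct and reproduces the stated powers of $l(\tO)$. Two points deserve a remark. First, in the compactness step the contradiction requires the limit to have unit $H^1$-norm, which needs \emph{strong} $H^1$ convergence of the minimizing sequence; this follows by applying \eqref{eq:korn1} to differences $\hat{\bs{u}}_n-\hat{\bs{u}}_m$, a detail you should state rather than leave at ``strongly in $L^2$ and weakly in $H^1$.'' Second, both your argument and the paper's bare statement leave implicit a genuine uniformity issue: $\tO$ is the surrogate domain and changes with the mesh, so the constants $C(\hat{\tO})$ produced by compactness depend a priori on which member of the family of surrogate domains one is on; independence of $h$ in \eqref{eq:korn2} really only refers to the weight $h^{-1/2}$, and uniformity over the family $\{\tO\}_h$ is an additional (tacit) assumption. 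Neither of these undermines the proof as a justification of the classical statement, so your proposal is acceptable.
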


\end{document}